\newtheorem{thm}{Theorem}[section]
\newtheorem{prop}[thm]{Proposition}
\newtheorem{lemma}[thm]{Lemma}
\newtheorem{cor}[thm]{Corollary}
\theoremstyle{definition}
\newtheorem{defn}[thm]{Definition}
\newtheorem*{setup}{Setup}
\newtheorem{remark}[thm]{Remark}
\newtheorem{exmpl}[thm]{Example}
\newcommand{\OX}{\ensuremath{\mathcal O_X}}
\newcommand{\OU}{\ensuremath{\mathcal O_U}}
\newcommand{\OXx}{\ensuremath{\mathcal O_{X,x}}}
\DeclareMathOperator{\QCoh}{QCoh}
\newcommand{\QCohX}{\ensuremath{\QCoh(X)}}
\newcommand{\QCohU}{\ensuremath{\QCoh(U)}}
\newcommand{\OXMod}{\ensuremath{\OX\text{\rm-Mod}}}
\newcommand{\OUMod}{\ensuremath{\OU\text{\rm-Mod}}}
\newcommand{\OXxMod}{\ensuremath{\OXx\text{\rm-Mod}}}
\newcommand{\QCohPk}{\ensuremath{\QCoh(\Pk)}}
\newcommand{\qc}{\mathrm{qc}}
\newcommand{\Zg}{\mathrm{Zg}}
\newcommand{\Pk}{\mathbb{P}_k^1}
\newcommand{\D}{\mathbf D}
\newcommand{\Z}{\mathbb Z}
\newcommand{\Zp}{\mathbb Z_{(p)}}
\newcommand{\Zpb}{\overline{\Zp}}
\newcommand{\Q}{\mathbb Q}
\newcommand{\Qpb}{\overline{\mathbb Q_{(p)}}}
\newcommand{\C}{\mathbf C}
\newcommand{\ito}{\hookrightarrow}
\DeclareMathOperator{\Hom}{Hom}
\DeclareMathOperator{\Ext}{Ext}
\DeclareMathOperator{\RHom}{\mathbf{R}Hom}
\DeclareMathOperator{\Hh}{H}
\DeclareMathOperator{\Spec}{Spec}
\newcommand{\HOM}{\operatorname{\mathbcal{Hom}}}
\let\mc\mathcal
\let\mbc\mathbcal
\begin{document}
\title{Purity in categories of sheaves}
\author{Mike Prest}
\address{School of Mathematics, Alan Turing Building, University of Manchester, Manchester M13 9PL, UK}
\email{mprest@manchester.ac.uk}
\author{Alexander Sl\'avik}
\address{Department of Algebra, Charles University, Faculty of Mathematics and Physics, Sokolovsk\'a 83, 186\,75 Prague 8, Czech Republic}
\email{slavik.alexander@seznam.cz}
\thanks{The second author's research was supported from the grant GA \v CR 17-23112S of the Czech Science Foundation, from the grant SVV-2017-260456 of the SVV project and from the grant UNCE/SCI/022 of the Charles University Research Centre.}

\begin{abstract}
We consider categorical and geometric purity for sheaves of modules over a scheme satisfying some mild conditions, both for the category of all sheaves and for the category of quasicoherent sheaves.  We investigate the relations between these four purities and compute a number of examples, in particular describing both the geometric and categorical Ziegler spectra for the category of quasicoherent sheaves over the projective line over a field.
\end{abstract}

\maketitle

The pure-exact sequences in the category of modules over a ring can be characterised in many ways, for example as the direct limits of split exact sequences.  Purity was defined first for abelian groups, then extended to categories of modules and, more recently, to many kinds of additive category; and it has found many uses.  It has also turned out to be of central importance in the model theory of modules and the extension of that to other additive categories.  We will refer to this as {\em categorical purity}, reflecting various of the equivalent ways in which it can be defined.

In the context of (quasicoherent) sheaves of modules over a scheme $X$ (which satisfies some fairly mild conditions) there is, as well as the categorical purity, another good notion of purity which is of interest (see, e.g., \cite{EEO} and references therein).  That purity is defined using the local structure, in particular in terms of restricting to a cover by affine opens, so we refer to this as {\em geometric purity}.  On an affine scheme, the two notions of purity coincide but in general they are different.

Here we investigate these purities and their relationships in the category \OXMod\ of all modules and in the category \QCohX\ of quasicoherent modules, with the particular aim of describing the indecomposable pure-injectives and the space---the Ziegler spectrum---that they form.  Note that, for each (suitable) scheme $X$, there will be four Ziegler spectra: one for each of categorical and geometric purity, and for each category \OXMod\ and \QCohX.  We compute a number of examples, in particular both spectra for the category \OXMod\ over a local affine 1-dimensional scheme in Section \ref{section-Zp}, and both spectra for the category \QCohX\ over the projective line over a field in Section \ref{section-ProjLine}.

\begin{setup}
Throughout, $X$ denotes a scheme, \OXMod\ the category of all \OX-modules and \QCohX\ the category of all quasicoherent modules (sheaves) on~$X$. Whenever $U \subseteq X$ is an open set, \OU\ is a shorthand for the restriction $\OX|_U$.
\end{setup}

For the majority of our results, we will assume that the scheme in question has some topological property:

\begin{defn}
A scheme is called \emph{quasiseparated} if the intersection of any two quasicompact open sets is quasicompact.
A scheme is \emph{concentrated} if it is quasicompact and quasiseparated.
\end{defn}

We continue by defining all the purity-related notions used in the paper.

\begin{defn}
\label{g-pure-def}
A short exact sequence in $0 \to \mc A \to \mc B \to \mc C \to 0$ in \OXMod\ is \emph{geometrically pure} (or \emph{g-pure} for short) if it stays exact after applying the sheaf tensor product functor $- \otimes \mc Y$ for all $\mc Y \in \OXMod$; equivalently, \cite[Proposition 3.2]{EEO}, if the sequence of \OXx-modules $0 \to \mc A_x \to \mc B_x \to \mc C_x \to 0$ is pure-exact for each $x \in X$. We define \emph{g-pure monomorphisms} and \emph{g-pure epimorphisms} in the obvious way. An \OX-module $\mc N$ is \emph{g-pure-injective} if the functor $\Hom_{\OXMod}(-,\mc N)$ is exact on g-pure exact sequences of \OX-modules.
\end{defn}

\begin{defn}
Recall that \QCohX\ is a full subcategory of \OXMod. Following \cite{EEO}, we say that a short exact sequence of quasicoherent sheaves is \emph{g-pure} if it is g-pure in the larger category \OXMod.  By \cite[Propositions 3.3 \& 3.4]{EEO}, this notion of g-purity for quasicoherent sheaves is equivalent to purity after restricting either to all open affine subsets, or to a chosen open affine cover of $X$. We say that a quasicoherent sheaf $\mc N$ is \emph{g-pure-injective in} \QCohX\ if the functor $\Hom_{\QCohX}(-,\mc N)$ is exact on g-pure exact sequences of quasicoherent sheaves.
\end{defn}

\begin{remark}
The notion of g-purity in \QCohX\ could also be established via the property that tensoring with any \emph{quasicoherent} sheaf preserves exactness. By \cite[Remark 3.5]{EEO}, this would give the same for quasiseparated schemes.
\end{remark}

\begin{defn}
If $X$ is a quasiseparated scheme, it(s underlying topological space) has a basis of quasicompact open sets closed under intersections. Therefore, by \cite[3.5]{PR}, the category \OXMod\ is locally finitely presented and as such has a notion of purity: This is defined by exactness of the functors $\Hom_{\OXMod}(\mc F, -)$, where $\mc F$ runs over all finitely presented objects (i.e.\ $\Hom_{\OXMod}(\mc F, -)$ commutes with direct limits). We will call this notion \emph{categorical purity} or \emph{c-purity} for short, defining c-pure-injectivity etc.\ in a similar fashion as in Definition \ref{g-pure-def}.
\end{defn}

\begin{defn}
If $X$ is a concentrated scheme, then the category \QCohX\ is locally finitely presented by \cite[Proposition 7]{G}. Again, all the c-pure notions are defined for \QCohX\ in a natural way.
\end{defn}

We will use \cite{P} as a convenient reference for many of the results that we use concerning purity and definability.

\begin{remark}
If $X$ is a concentrated scheme, then by \cite[Proposition 3.9]{EEO}, c-pure-exact sequences in \QCohX\ are g-pure-exact, and it is easy to see that the proof carries mutatis mutandis to the category \OXMod\ for $X$ quasiseparated. Therefore, in these cases, g-pure-injectivity is a stronger notion than c-pure-injectivity.
\end{remark}

Let us point out that for having a well-behaved (categorical) purity, one does not need a locally finitely presented category; a \emph{definable} category (in the sense of \cite[Part III]{P}) would be enough. However, we are not aware of any scheme $X$ for which \QCohX\ or \OXMod\ would be definable, but not locally finitely presented. Furthermore, the question of when exactly are these categories locally finitely presented does not seem to be fully answered. It is also an open question whether every definable Grothendieck category is locally finitely presented (there is a gap in the argument for this at \cite[3.6]{P5}).

In any case, the situation in this paper is as follows: Section \ref{section-OXMod}, dealing with the category \OXMod, does not need any special assumptions on the scheme $X$ for most of its propositions, therefore it starts with the (slightly obscure) assumption on mere definability of \OXMod. On the other hand, Section \ref{section-QCohX} really needs $X$ to be concentrated almost all the time, a fact that is stressed in all the assertions.

\medskip

If $X$ is an affine scheme, then the category \QCohX\ is equivalent to the category of modules over the ring of global sections of $X$, and both g-purity and c-purity translate to the usual purity in module categories. A converse to this for $X$ concentrated is Proposition \ref{affinity-criterion}. However, as the Section \ref{section-Zp} shows, even for very simple affine schemes, the purities do not coincide in the category \OXMod.

\section{Relation between purity in \texorpdfstring{\OXMod}{O\_X-Mod} and \texorpdfstring{\QCohX}{QCoh(X)}}

Recall that the (fully faithful) forgetful functor $\QCohX \to \OXMod$ has a right adjoint $\C\colon \OXMod \to \QCohX$, usually called the \emph{coherator}. If we need to specify the scheme $X$, we use notation like $\C_X$.

Since g-purity in \QCohX\ is just ``restricted'' g-purity from \OXMod, each quasicoherent sheaf which is g-pure-injective as an \OX-module is also g-pure-injective as a quasicoherent sheaf. The example at the end of Section \ref{section-Zp} shows that even in a quite simple situation, the converse is not true: A g-pure-injective quasicoherent sheaf need not be g-pure-injective in \OXMod.

The following was observed in \cite{EEO}:
\begin{lemma}[{\cite[Lemma 4.7]{EEO}}]
Let $\mc N$ be a g-pure-injective \OX-module. Then its coherator $\C(\mc N)$ is a g-pure-injective quasicoherent sheaf.
\end{lemma}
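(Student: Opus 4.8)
The plan is to reduce the statement to a formal manipulation of the adjunction $i \dashv \C$, where $i\colon \QCohX \to \OXMod$ is the forgetful functor, combined with the fact that g-purity for a short exact sequence of quasicoherent sheaves is \emph{by definition} the same as g-purity of that sequence viewed in \OXMod.

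First I would fix an arbitrary g-pure-exact sequence $0 \to \mc A \to \mc B \to \mc C \to 0$ in \QCohX. By definition this sequence, regarded in the ambient category \OXMod, is a g-pure-exact sequence of \OX-modules. Since $\mc N$ is g-pure-injective in \OXMod, applying $\Hom_{\OXMod}(-,\mc N)$ yields an exact sequence of abelian groups
\[
0 \to \Hom_{\OXMod}(\mc C,\mc N) \to \Hom_{\OXMod}(\mc B,\mc N) \to \Hom_{\OXMod}(\mc A,\mc N) \to 0 .
\]

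Next I would invoke the coherator adjunction: for every quasicoherent sheaf $\mc M$ there is an isomorphism $\Hom_{\OXMod}(\mc M,\mc N) \cong \Hom_{\QCohX}(\mc M,\C(\mc N))$, natural in $\mc M$. Applying this naturally at $\mc A$, $\mc B$, $\mc C$ identifies the exact sequence above with
\[
0 \to \Hom_{\QCohX}(\mc C,\C(\mc N)) \to \Hom_{\QCohX}(\mc B,\C(\mc N)) \to \Hom_{\QCohX}(\mc A,\C(\mc N)) \to 0 ,
\]
which is therefore exact. As the g-pure-exact sequence in \QCohX\ was arbitrary, this shows that $\Hom_{\QCohX}(-,\C(\mc N))$ is exact on all g-pure-exact sequences of quasicoherent sheaves, i.e.\ $\C(\mc N)$ is g-pure-injective in \QCohX\ (and of course $\C(\mc N)$ is quasicoherent, being in the image of $\C$).

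I do not expect any genuine obstacle here; the argument is essentially formal. The only two points that merit an explicit line of care are, first, that the definition of g-purity in \QCohX\ is literally inherited from \OXMod, so that the sequence we start with really is g-pure there and the hypothesis on $\mc N$ applies; and second, that the adjunction isomorphism is natural, so that it transports the connecting maps of the Hom-sequence, and hence exactness, correctly.
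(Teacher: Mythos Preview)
Your argument is correct. Note that the paper itself does not give a proof of this lemma: it is stated with a citation to \cite[Lemma 4.7]{EEO}, so there is no in-paper proof to compare against. Your proof is precisely the standard formal argument---g-purity in \QCohX\ is by definition g-purity in \OXMod, and then the coherator adjunction transports the exactness of $\Hom_{\OXMod}(-,\mc N)$ to exactness of $\Hom_{\QCohX}(-,\C(\mc N))$---which is also the argument in the cited source.
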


It is not clear whether the coherator preserves g-pure-exact sequences or at least g-pure monomorphisms. A partial result in this direction is Lemma \ref{factor-through-coherator}.

\medskip

The relation between c-purity in \OXMod\ and \QCohX\ is in general not so clear as for g-purity. Note that while \QCohX\ is closed under direct limits in \OXMod\ (indeed, arbitrary colimits), it is usually not closed under direct products and hence it is not a definable subcategory. However, in the case of $X$ concentrated, much more can be said. We start with an important observation.

\begin{defn}[{\cite[Part III]{P}}]
A functor between definable categories is called \emph{definable} if it commutes with products and direct limits.
\end{defn}

\begin{lemma}
Let $X$ be a concentrated scheme. Then the coherator functor is definable.
\end{lemma}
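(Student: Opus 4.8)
The plan is to check the two halves of the definition of a definable functor separately: commutation with direct limits and commutation with products. For direct limits, recall that $\C$ is a right adjoint to the forgetful (inclusion) functor $\QCohX \to \OXMod$. Right adjoints need not preserve colimits in general, so the key input here is that $X$ is concentrated: by the cited result \cite[Proposition 7]{G}, $\QCohX$ is locally finitely presented, and the forgetful functor sends finitely presented quasicoherent sheaves to finitely presented \OX-modules (this uses quasiseparatedness, via \cite[3.5]{PR}, to understand the finitely presented objects on both sides). Since $\QCohX$ is closed under direct limits in \OXMod, one sees directly that for a finitely presented quasicoherent $\mc F$, the functor $\Hom_{\QCohX}(\mc F,\C(-)) \cong \Hom_{\OXMod}(\mc F,-)$ commutes with direct limits; as the finitely presented objects generate $\QCohX$, a standard argument upgrades this to: $\C$ itself commutes with direct limits.

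For products, the subtlety is exactly the one flagged in the excerpt: $\QCohX$ is generally \emph{not} closed under products in \OXMod, so the product of a family $(\mc M_i)$ in $\QCohX$ is not the naive product but rather $\C\bigl(\prod_i \mc M_i\bigr)$, the coherator applied to the \OXMod-product. Thus what must be shown is $\C\bigl(\prod_i \mc N_i\bigr) \cong \C\bigl(\prod_i \C(\mc N_i)\bigr)$ naturally for $\mc N_i \in \OXMod$; equivalently, that $\C$ does not see the difference between $\prod_i \mc N_i$ and $\prod_i \C(\mc N_i)$. This follows because $\C$ is a right adjoint and hence preserves products on the nose (the product in $\QCohX$ being computed as above), together with the adjunction counit $\C(\mc N_i) \to \mc N_i$ being a universal map to a quasicoherent target: applying $\C$ to the canonical map $\prod_i \C(\mc N_i) \to \prod_i \mc N_i$ and using that $\Hom_{\OXMod}(\mc G, -)$ for quasicoherent $\mc G$ factors through $\C$, one checks the induced map $\C(\prod_i \C(\mc N_i)) \to \C(\prod_i \mc N_i)$ is an isomorphism. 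Hence $\C$ carries the \OXMod-product of the $\mc N_i$ to the $\QCohX$-product of the $\C(\mc N_i)$, which is what commuting with products means.

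I expect the product half to be the main obstacle, precisely because the two categories have different products and one has to be careful that ``commutes with products'' is being interpreted with the correct (categorical) products on each side; once that is set up, the adjunction does the work. The direct limit half is where the concentrated hypothesis is genuinely used, and the only real content there is the claim that the inclusion preserves finitely presented objects, which should be extracted from \cite{PR} and \cite{G}. Finally, one remarks that since a functor commuting with products and direct limits between definable categories is definable by \cite[Part III]{P}, and both $\OXMod$ and $\QCohX$ are definable (being locally finitely presented) under the concentrated hypothesis, the conclusion follows.
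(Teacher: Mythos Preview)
You have the two halves inverted in difficulty. The product half is \emph{trivial}: $\C$ is a right adjoint, and right adjoints preserve all limits, hence products. That is the paper's entire argument for products, and you do eventually say this yourself (``$\C$ is a right adjoint and hence preserves products on the nose''), but then surround it with an unnecessary verification that $\C(\prod_i \C(\mc N_i)) \to \C(\prod_i \mc N_i)$ is an isomorphism. There is no subtlety here about ``different products on each side''; that is exactly what the general limit-preservation statement takes care of.

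The direct-limit half is where the actual content lies, and here your approach is problematic relative to the paper's logical structure. The paper simply cites \cite[Lemma B.15]{TT}, which directly asserts that for $X$ concentrated the coherator commutes with filtered colimits. Your route instead passes through the claim that the inclusion $\QCohX \hookrightarrow \OXMod$ preserves finitely presented objects. But in the paper that claim is Lemma~\ref{concentrated-properties}(4), and it is \emph{deduced from} the present lemma (look at the chain of isomorphisms there: the step ``$\C$ commuting with direct limits for concentrated schemes'' is exactly what you are trying to prove). So within this paper's development your argument is circular. You might hope to extract fp-preservation independently from \cite{PR} and \cite{G}, but those references describe the fp objects of each category separately; comparing them without something like \cite[B.15]{TT} in the background is not automatic, and you have not indicated how you would do it. The clean fix is simply to invoke \cite[B.15]{TT} for the direct-limit half, as the paper does.
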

\begin{proof}
As a right adjoint, $\C$ always commutes with limits and in particular products. By \cite[Lemma B.15]{TT}, it also commutes with direct limits for $X$ concentrated.
\end{proof}

Recall that by \cite[Corollary 18.2.5]{P}, definable functors preserve pure-exactness and pure-injectivity. Hence we obtain the following properties for concentrated schemes:

\begin{lemma}\label{concentrated-properties}
Let $X$ be a concentrated scheme.
\begin{enumerate}
\item The coherator functor preserves c-pure-exact sequences and c-pure-injec\-tivity.
\item A short exact sequence of quasicoherent sheaves is c-pure-exact in \QCohX\ if and only if it is also c-pure-exact in the larger category \OXMod.
\item A quasicoherent sheaf c-pure-injective in \OXMod\ is c-pure-injective in \QCohX.
\item A quasicoherent sheaf finitely presented in \QCohX\ is finitely presented in \OXMod. 
\end{enumerate}
\end{lemma}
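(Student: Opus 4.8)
The plan is to deduce all four items from two facts already in hand. First, the coherator $\C$ is definable (the preceding lemma), so by \cite[Corollary 18.2.5]{P} it preserves c-pure-exact sequences and c-pure-injective objects and it commutes with direct limits. Second, since the forgetful functor $\QCohX \to \OXMod$ is fully faithful and has $\C$ as a right adjoint, the unit of the adjunction is a natural isomorphism; thus $\C(\mc M) \cong \mc M$ naturally for quasicoherent $\mc M$, and the adjunction supplies a natural isomorphism $\Hom_{\OXMod}(\mc F, \mc M) \cong \Hom_{\QCohX}(\mc F, \C(\mc M))$ whenever $\mc F$ is quasicoherent and $\mc M$ is an arbitrary \OX-module. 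Granting this, items (1) and (3) are immediate: (1) is just the assertion that the definable functor $\C$ preserves c-purity and c-pure-injectivity, and for (3), if $\mc N$ is quasicoherent and c-pure-injective in \OXMod\ then $\C(\mc N)$ is c-pure-injective in \QCohX\ by (1), while $\C(\mc N) \cong \mc N$, so $\mc N$ is c-pure-injective in \QCohX.

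For item (2), the implication from c-pure-exactness in \OXMod\ to c-pure-exactness in \QCohX\ follows by applying $\C$ to the sequence: by (1) the image sequence is c-pure-exact in \QCohX, and since all three terms are quasicoherent the natural isomorphism $\C(-) \cong (-)$ identifies this image with the original sequence. The reverse implication is the one requiring genuine work, because the inclusion $\QCohX \to \OXMod$ is \emph{not} definable---it fails to commute with products, as noted before the lemma---so c-purity cannot be transferred along it formally. Here I would invoke the characterisation, valid in any locally finitely presented category \cite{P}, of c-pure-exact sequences as the direct limits of split exact sequences: a c-pure-exact sequence of quasicoherent sheaves is such a direct limit computed in \QCohX, hence, since \QCohX\ is closed under direct limits in \OXMod\ and split exactness of a sequence passes to a full subcategory, it is a direct limit of split exact sequences in \OXMod\ as well, and therefore c-pure-exact there. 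This step uses that both \OXMod\ and \QCohX\ are locally finitely presented, which is exactly why $X$ is taken concentrated.

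Item (4) I would prove directly from the adjunction. Let $\mc F$ be finitely presented in \QCohX\ and let $(\mc M_i)_{i \in I}$ be a direct system in \OXMod\ with colimit $\mc M$. Using the natural isomorphism $\Hom_{\OXMod}(\mc F, -) \cong \Hom_{\QCohX}(\mc F, \C(-))$ together with the facts that $\C$ commutes with direct limits (so $\C(\mc M) \cong \varinjlim_i \C(\mc M_i)$) and that $\Hom_{\QCohX}(\mc F, -)$ commutes with direct limits, one obtains a chain of natural isomorphisms from $\varinjlim_i \Hom_{\OXMod}(\mc F, \mc M_i)$ to $\Hom_{\OXMod}(\mc F, \mc M)$; chasing the adjunction isomorphisms shows that the composite is the canonical comparison map, so $\Hom_{\OXMod}(\mc F, -)$ commutes with direct limits and $\mc F$ is finitely presented in \OXMod.

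The one real obstacle is the reverse implication in item (2): the other three statements are formal consequences of the definability of the coherator and of the inclusion--coherator adjunction, whereas that implication is not formal along the non-definable inclusion $\QCohX \to \OXMod$ and genuinely needs the direct-limit-of-split-sequences description of categorical purity, together with the local finite presentability of both categories.
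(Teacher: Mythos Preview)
Your proposal is correct and follows essentially the same route as the paper: (1) is the definability of $\C$, (3) is the ``$\C$ is the identity on \QCohX'' argument (which the paper gives as its alternative proof of (3)), (4) is exactly the paper's chain of adjunction isomorphisms, and for (2) both you and the paper use $\C$ for the \OXMod$\Rightarrow$\QCohX\ direction and the direct-limit-of-split-sequences characterisation (together with \QCohX\ being closed under direct limits in \OXMod) for the \QCohX$\Rightarrow$\OXMod\ direction. Your diagnosis that only this last implication requires anything beyond formal properties of the adjunction is also on the mark.
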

\begin{proof}
(1) follows from definability of coherator.

(2) Since \QCohX\ is locally finitely presented, every c-pure-exact sequence is the direct limit of split short exact sequences. However, direct limits in \QCohX\ are the same as in \OXMod, so we get the ``only if'' part. To see the ``if'' part, note that the coherator acts as the identity when restricted to \QCohX, so the statement follows from (1).

(3) This is a consequence of (2), or we can again argue that the coherator is the identity on \QCohX\ and use (1).

(4) Let $\mc F$ be a finitely presented object of \QCohX, $I$ be a directed set and $(\mc M_i)_{i \in I}$ a directed system in \OXMod. Then
\begin{align*}
\Hom_{\OXMod}\bigl(\mc F, \varinjlim\nolimits_{i \in I} \mc M_i\bigr)
&\cong \Hom_{\QCohX}\bigl(\mc F, \C(\varinjlim\nolimits_{i \in I} \mc M_i)\bigr) \\
&\cong \Hom_{\QCohX}\bigl(\mc F, \varinjlim\nolimits_{i \in I} \C(\mc M_i)\bigr) \\
&\cong \varinjlim\nolimits_{i \in I} \Hom_{\QCohX}\bigl(\mc F, \C(\mc M_i)\bigr) \\
&\cong \varinjlim\nolimits_{i \in I} \Hom_{\OXMod}(\mc F, \mc M_i),
\end{align*}
where the natural isomorphisms are due to (in this order) $\C$ being a right adjoint, $\C$ commuting with direct limits for concentrated schemes, $\mc F$ being finitely presented in \QCohX, and finally the adjointness again.
\end{proof}

Note, however, that the examples at the end of Section \ref{section-QCohX} show that the pure-injectives in \QCohX\ have little in common with the pure-injectives of \OXMod.

\section{Purity in \texorpdfstring{\OXMod}{O\_X-Mod}}
\label{section-OXMod}

\begin{setup}
If, in this section, any assertion involves c-purity or definability, then it is assumed that the scheme $X$ is such that \OXMod\ is a definable category (e.g.\ $X$ is quasiseparated); similarly for \OUMod\ if $U$ is involved, too.
\end{setup}

We start with a more general lemma, which is of its own interest. Recall that a full subcategory $\mbc A$ of a category $\mbc B$ is called \emph{reflective} provided that the inclusion functor $\mbc A \ito \mbc B$ has a left adjoint (usually called the \emph{reflector}).

\begin{lemma}\label{reflection}
Let $\mbc B$ be an additive category with arbitrary direct sums and products and $\mbc A$ a reflective subcategory. Then an object of $\mbc A$ is pure-injective if and only if it is pure-injective as an object of $\mbc B$.
\end{lemma}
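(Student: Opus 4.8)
The plan is to reduce the statement to the purely diagrammatic ``summation morphism'' criterion for pure-injectivity, which, because it refers only to (co)products and morphisms, transfers cleanly between $\mbc B$ and a reflective subcategory. Recall (see \cite{P}, and its version for definable categories in \cite[Part III]{P}) that an object $\mc M$ of an additive category with arbitrary direct sums and products is pure-injective if and only if for every index set $I$ the summation morphism $\mc M^{(I)} \to \mc M$ (the morphism out of the coproduct whose composite with every coproduct inclusion is $\mathrm{id}_{\mc M}$) factors through the canonical morphism $\mc M^{(I)} \to \mc M^I$ into the product.

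First I would record what (co)products look like. Write $L\colon \mbc B \to \mbc A$ for the reflector and $\eta$ for the unit. Being reflective, $\mbc A$ is closed under all limits that exist in $\mbc B$; in particular, for $\mc N \in \mbc A$ the product $\mc N^I$ is computed identically in $\mbc A$ and in $\mbc B$, and we denote it $\mc P$. The coproduct of $I$ copies of $\mc N$ in $\mbc A$, on the other hand, is $L\mc S$ where $\mc S := \mc N^{(I)}$ is the coproduct in $\mbc B$. Since $\mc N, \mc P \in \mbc A$, the universal property of $\eta_{\mc S}\colon \mc S \to L\mc S$ gives unique factorisations $\sigma = \sigma'\circ\eta_{\mc S}$ of the $\mbc B$-summation morphism $\sigma\colon \mc S \to \mc N$, and $\iota = j\circ\eta_{\mc S}$ of the canonical morphism $\iota\colon \mc S \to \mc P$. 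A check on coproduct inclusions and product projections — using that the coproduct cocone in $\mbc A$ is obtained by composing that of $\mbc B$ with $\eta_{\mc S}$ — shows that $\sigma'\colon L\mc S \to \mc N$ is the summation morphism in $\mbc A$ and $j\colon L\mc S \to \mc P$ is the canonical morphism from the $\mbc A$-coproduct to the $\mbc A$-product.

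Given this, both directions are immediate and symmetric. Suppose $\mc N$ is pure-injective in $\mbc B$; for each $I$ choose $\tilde\sigma\colon \mc P \to \mc N$ with $\tilde\sigma\circ\iota = \sigma$. Then $\tilde\sigma\circ j\circ\eta_{\mc S} = \sigma = \sigma'\circ\eta_{\mc S}$, whence $\tilde\sigma\circ j = \sigma'$ by uniqueness of factorisations through $\eta_{\mc S}$; so $\sigma'$ factors through $j$ and $\mc N$ is pure-injective in $\mbc A$. Conversely, if $\mc N$ is pure-injective in $\mbc A$, choose $\tilde\sigma'\colon \mc P \to \mc N$ with $\tilde\sigma'\circ j = \sigma'$; then $\tilde\sigma'\circ\iota = \tilde\sigma'\circ j\circ\eta_{\mc S} = \sigma'\circ\eta_{\mc S} = \sigma$, so $\sigma$ factors through $\iota$ and $\mc N$ is pure-injective in $\mbc B$.

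Everything here is formal; the only step needing a little attention is the identification in the second paragraph of $\sigma'$ and $j$ with the summation and canonical morphisms of $\mbc A$, which comes down to the standard description of (co)products in a reflective subcategory. One should also be sure that the summation-morphism criterion is being invoked within a class of categories for which it is valid — which is precisely why the hypotheses are phrased as ``additive category with arbitrary direct sums and products'', the setting in which \cite{P} establishes it.
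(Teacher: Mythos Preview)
Your proof is correct and follows essentially the same approach as the paper: both invoke the Jensen--Lenzing summation-map criterion \cite[4.3.6]{P}, note that products in $\mbc A$ agree with those in $\mbc B$ while the $\mbc A$-coproduct is the reflection of the $\mbc B$-coproduct, and use the adjunction to transfer the factorisation back and forth. Your version is simply a bit more explicit in spelling out both directions and in naming the unit $\eta$, whereas the paper compresses this into the single observation that maps out of the two coproducts into objects of $\mbc A$ correspond naturally.
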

\begin{proof}
Recall the Jensen-Lenzing criterion for pure-injectivity (see \cite[4.3.6]{P}): An object $M$ is pure-injective if and only if for any set $I$, the summation map $M^{(I)} \to M$ factors through the natural map $M^{(I)} \to M^I$. Also recall that while limits of diagrams in $\mbc A$ coincide with those in $\mbc B$, colimits in $\mbc A$ are computed via applying the reflector to the colimit in $\mbc B$.

Let $M$ be an object of $\mbc A$, $I$ any set and denote by $M^{(I)}$ the coproduct in $\mbc A$ and $M^{(I)_{\mbc B}}$ the coproduct in $\mbc B$. By the adjunction, maps from $M^{(I)}$ to objects of $\mbc A$ naturally correspond to maps from $M^{(I)_{\mbc B}}$ to objects of $\mbc A$, hence the summation map $M^{(I)} \to M$ factors through $M^{(I)} \to M^I$ if and only if $M^{(I)_{\mbc B}} \to M$ factors through $M^{(I)_{\mbc B}} \to M^I$: the naturality of adjunction ensures that the factorizing map fits into the commutative diagram regardless of which direct sum we pick.
\end{proof}

\begin{cor}
A sheaf of \OX-modules is c-pure-injective regardless if it is viewed as a sheaf or a presheaf.
\end{cor}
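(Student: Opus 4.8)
The plan is to read this off Lemma~\ref{reflection}, applied to $\mbc B$ the category of presheaves of $\mathcal O_X$-modules and $\mbc A = \OXMod$ its full subcategory of sheaves.

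First I would record the structural facts needed. The presheaf category $\mbc B$ has arbitrary direct sums and products, both formed sectionwise; moreover it is a locally finitely presented Grothendieck category (it is a module category over a small preadditive category built from the poset of open subsets of $X$ together with $\mathcal O_X$), so it carries its own notion of c-purity, and by the Jensen--Lenzing criterion \cite[4.3.6]{P} an object of $\mbc B$ is c-pure-injective exactly when that criterion holds for it. The same equivalence holds in $\OXMod$ under the standing hypothesis of this section that $\OXMod$ is definable. Finally, $\OXMod$ is a reflective subcategory of $\mbc B$: the inclusion of sheaves into presheaves has the sheafification functor as a left adjoint.

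With these in hand, Lemma~\ref{reflection} applies literally and says that an object of $\OXMod$ satisfies the Jensen--Lenzing criterion in $\OXMod$ if and only if it satisfies it in $\mbc B$. Translating via the previous paragraph, a sheaf is c-pure-injective in $\OXMod$ if and only if it is c-pure-injective in $\mbc B$, which is the assertion; and of course a sheaf, viewed as a presheaf, is the same object in both cases.

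The only point needing care --- and the nearest thing to an obstacle --- is essentially bookkeeping: one must check that the abstract pure-injectivity furnished by Lemma~\ref{reflection} coincides with c-pure-injectivity in each of the two locally finitely presented categories involved, and that colimits in $\mbc B$ are the sectionwise ones, so that sheafification genuinely is a left adjoint defined on all of $\mbc B$ (the universal property of sheafification). Both facts are standard, so nothing further is required.
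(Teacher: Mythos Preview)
Your proposal is correct and takes essentially the same approach as the paper: both apply Lemma~\ref{reflection} with $\mbc B$ the presheaf category, $\mbc A=\OXMod$, and sheafification as the reflector. The paper's proof is the one-line version of yours; the extra bookkeeping you spell out (that the Jensen--Lenzing criterion agrees with c-pure-injectivity in each category, and that presheaf (co)limits are sectionwise) is exactly what the paper leaves implicit.
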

\begin{proof}
The category of sheaves is always a reflective subcategory of presheaves, sheafification being the reflector.
\end{proof}

\begin{cor}\label{open-pinj}
Let $\mc N$ be a c-pure-injective \OX-module and $U \subseteq X$ open. Then $\mc N(U)$ is a pure-injective $\OX(U)$-module.
\end{cor}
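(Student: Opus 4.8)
The plan is to deduce this from the Jensen--Lenzing criterion for pure-injectivity (\cite[4.3.6]{P}), applied exactly as in the proof of Lemma~\ref{reflection}, by transporting the required factorization from \OXMod\ down to $\OX(U)\text{-Mod}$ along the sections functor $\Gamma(U,-)$. Fix a set $I$. Products in \OXMod\ are computed sectionwise, so there is a natural $\OX(U)$-linear identification $\Gamma(U,\mc N^I)\cong\mc N(U)^I$; the coproduct $\mc N^{(I)}$, however, is the sheafification of the sectionwise coproduct, so one has only a canonical comparison map $\psi\colon\mc N(U)^{(I)}\to\mc N^{(I)}(U)$, built from the coproduct inclusions $\iota_i\colon\mc N\to\mc N^{(I)}$ by applying $\Gamma(U,-)$ and using the universal property of the coproduct of $\OX(U)$-modules.

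The one point to check is that $\psi$ intertwines the maps occurring in the criterion. Let $\nabla\colon\mc N^{(I)}\to\mc N$ be the summation map and $\iota\colon\mc N^{(I)}\to\mc N^I$ the canonical map. Then $\Gamma(U,\nabla)\circ\psi$ is the summation map $\mc N(U)^{(I)}\to\mc N(U)$, and $\Gamma(U,\iota)\circ\psi$ is, under the identification above, the canonical map $\mc N(U)^{(I)}\to\mc N(U)^I$; both are verified by precomposing with each inclusion $\mc N(U)\to\mc N(U)^{(I)}$ and using functoriality of $\Gamma(U,-)$ together with $\nabla\circ\iota_i=\mathrm{id}_{\mc N}$ and the fact that $\iota\circ\iota_i$ is the inclusion of the $i$-th factor of $\mc N^I$.

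Granting this, the argument closes: as $\mc N$ is c-pure-injective in \OXMod, Jensen--Lenzing yields $h\colon\mc N^I\to\mc N$ with $h\circ\iota=\nabla$; applying $\Gamma(U,-)$ and the two identities shows that $\Gamma(U,h)\colon\mc N(U)^I\to\mc N(U)$ composed with the canonical map $\mc N(U)^{(I)}\to\mc N(U)^I$ equals the summation map $\mc N(U)^{(I)}\to\mc N(U)$. Since $I$ was arbitrary, Jensen--Lenzing gives that $\mc N(U)$ is a pure-injective $\OX(U)$-module. I expect the only delicate point to be precisely the comparison map $\psi$: one cannot evaluate the sheaf coproduct $\mc N^{(I)}$ at $U$ directly because of sheafification, but the Jensen--Lenzing factorization lives entirely on the product side, where sections behave well, and only has to be pulled back along $\psi$ --- for which no compatibility of sheafification with sections is needed. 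Alternatively, by the preceding corollary one may regard $\mc N$ as a c-pure-injective presheaf, where coproducts are sectionwise and $\mathrm{ev}_U$ is a definable functor, and then invoke \cite[Corollary~18.2.5]{P}.
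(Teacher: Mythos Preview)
Your main argument is correct and takes a genuinely different route from the paper. The paper passes immediately to the presheaf category via the preceding corollary (Lemma~\ref{reflection} applied to sheafification): once $\mc N$ is known to be pure-injective as a presheaf, the Jensen--Lenzing factorization exists for the \emph{presheaf} coproduct $\mc N^{(I)\,\mathrm{pre}}$, and evaluation at $U$ is then literally sectionwise on both sides, so the factorization for $\mc N(U)$ drops out without any comparison map. You instead stay in the sheaf category and carry the sheaf-level factorization $h\circ\iota=\nabla$ down through $\Gamma(U,-)$, absorbing the sheafification discrepancy into the comparison map $\psi\colon\mc N(U)^{(I)}\to\mc N^{(I)}(U)$; the verification that $\psi$ intertwines $\nabla$ and $\iota$ with their module-level counterparts is exactly what is needed, and your check by precomposition with the coproduct inclusions is fine. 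Your approach has the virtue of not invoking Lemma~\ref{reflection} at all, at the cost of the (admittedly mild) bookkeeping around $\psi$; the paper's approach is shorter once that lemma is in hand. The alternative you sketch in your last sentence---view $\mc N$ as a pure-injective presheaf and use that $\mathrm{ev}_U$ on presheaves commutes with all limits and colimits---is essentially the paper's own argument, phrased via \cite[Corollary~18.2.5]{P} rather than by running Jensen--Lenzing directly.
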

\begin{proof}
Let $I$ be any set. By the previous corollary, $\mc N$ is also a (c-)pure-injective object in the category of presheaves, so the summation map from the presheaf coproduct $\mc N^{(I)\,\mathrm{pre}} \to \mc N$ factors through $\mc N^{(I)\,\mathrm{pre}} \ito \mc N^I$. However, for presheaves we have $\mc N^I(U) \cong \mc N(U)^I$ and (unlike for sheaves) $\mc N^{(I)\,\mathrm{pre}}(U) \cong \mc N(U)^{(I)}$, so we have the desired factorization for $\mc N(U)$ as well.
\end{proof}

In the case of concentrated open sets we can say even more:

\begin{lemma}\label{sections-definable}
Let $U \subseteq X$ be a concentrated open set. Then the functor of sections over $U$, $\mc M \mapsto \mc M(U)$, is definable.
\end{lemma}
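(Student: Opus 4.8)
The goal is to show that for a concentrated open $U \subseteq X$, the section functor $\Gamma(U,-) \colon \OXMod \to \OX(U)\text{\rm-Mod}$ (equivalently $\OU\text{\rm-Mod}$) is definable, i.e.\ commutes with direct products and direct limits. The plan is to exhibit $\Gamma(U,-)$ as a composite of functors each already known (or easily shown) to be definable, and invoke the fact that definable functors are closed under composition.

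First I would factor the section functor through restriction: $\Gamma(U,-) = \Gamma(U,-|_U) \circ (-|_U)$, where $-|_U \colon \OXMod \to \OUMod$ is restriction of sheaves. Restriction is exact, has both adjoints (extension by zero on the left, pushforward on the right) and is easily seen to commute with arbitrary products and direct limits computed sectionwise on the basic opens, so it is definable. This reduces the problem to the case $X = U$ concentrated, i.e.\ to showing that global sections $\Gamma(U,-) \colon \OUMod \to \OU(U)\text{\rm-Mod}$ is definable. For this, the key point is that $U$ being concentrated (quasicompact and quasiseparated) means it has a \emph{finite} affine open cover $U = \bigcup_{i=1}^n U_i$ whose pairwise intersections $U_i \cap U_j$ are again quasicompact, and hence covered by finitely many affine opens. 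Global sections then fit into a finite limit (an equalizer of two products) of the section functors over these affine pieces via the sheaf axiom:
\[
\Gamma(U,\mc M) = \ker\Bigl( \prod_{i} \mc M(U_i) \rightrightarrows \prod_{i,j} \mc M(U_i \cap U_j) \Bigr),
\]
and iterating once more each $\mc M(U_i \cap U_j)$ is itself a finite limit of $\mc M$ evaluated on affine opens. So it suffices to treat $\Gamma(V,-)$ for $V$ affine, and then assemble via finite limits.

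For $V$ affine, $\Gamma(V,-) \colon \OUMod \to \OU(V)\text{\rm-Mod}$ is just evaluation of a sheaf at the quasicompact open $V$; since the topology has a basis of quasicompact opens closed under intersection and products/direct limits of \OX-modules are formed as the sheafification of the corresponding presheaf operations, the crucial fact is that evaluation at a quasicompact open commutes with direct limits (this is where quasicompactness of $V$ is used — it lets one commute the filtered colimit past the finite covering data in the sheafification) and trivially with products (products of sheaves are computed sectionwise). This makes $\Gamma(V,-)$ definable. Finally, I would invoke that a functor built from definable functors by composition and by taking finite limits (finite products and equalizers) is again definable — finite products obviously commute with products and direct limits, and in a locally finitely presented category a finite limit of definable functors is definable because such limits commute with both direct products and filtered colimits in this setting. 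Composing: restriction, then the finite-limit assembly out of affine evaluations, yields that $\Gamma(U,-)$ is definable.

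The main obstacle is the step asserting that evaluation at a quasicompact open commutes with direct limits of \OX-modules — one must be careful that the direct limit in \OXMod\ is a \emph{sheaf} direct limit (presheaf colimit followed by sheafification) and check that sheafification does not disturb the sections over a quasicompact open when the colimit is filtered; this is exactly the point where quasicompactness (finiteness of covers) is essential, and it is closely related to the statement of \cite[3.5]{PR} that \OXMod\ is locally finitely presented with the representables $\OX|_V$ for $V$ quasicompact among the finitely presented generators. A clean way to phrase it: $\Gamma(V,-) \cong \Hom_{\OXMod}(\OX|_V, -)$ for $V$ in the chosen basis of quasicompact opens, and $\OX|_V$ (or rather the sheaf $j_!\OU|_V$, extension by zero) is finitely presented in \OXMod, so this Hom commutes with direct limits by definition of finite presentability, and with products since Hom always does. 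The rest is bookkeeping with the finite limits coming from the sheaf condition over a concentrated space.
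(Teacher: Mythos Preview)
Your argument is correct, but it is considerably more elaborate than what the paper does. The paper's proof is two lines: products of sheaves are computed sectionwise, so $\Gamma(U,-)$ commutes with products for \emph{any} open $U$; and for commuting with filtered colimits the paper simply invokes \cite[009E]{Stacks}, which is exactly the statement that sections over a quasicompact quasiseparated open commute with filtered colimits of sheaves of modules.

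Your route, by contrast, essentially re-proves the Stacks result by hand: you express $\Gamma(U,-)$ as a finite limit (via the sheaf/\v{C}ech condition) of section functors over affine opens and their quasicompact intersections, and then argue that each of those is definable. This is perfectly sound---finite limits commute with filtered colimits in a Grothendieck category and with products everywhere, so a finite limit of definable functors is definable. What your approach buys is self-containment and some insight into \emph{why} the concentratedness hypothesis is exactly what is needed (finiteness of the \v{C}ech data). What the paper's approach buys is brevity. Note also that your ``clean way to phrase it'' at the end, namely $\Gamma(V,-)\cong\Hom_{\OXMod}(\iota_{V,!}\mc O_V,-)$ with $\iota_{V,!}\mc O_V$ finitely presented, actually applies directly to $U$ itself (not just to affine $V$): under the standing assumption that $X$ is quasiseparated, any quasicompact open is concentrated, and \cite[3.5]{PR} gives that $\iota_{U,!}\mc O_U$ is finitely presented. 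So your \v{C}ech decomposition, while correct, is not strictly necessary even within your own framework.
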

\begin{proof}
The functor of sections commutes with products for any open set. Furthermore, if the open set is concentrated, then this functor also commutes with direct limits by \cite[009E]{Stacks}.
\end{proof}

Since g-purity is checked stalk-wise, it is useful to overview the related properties of skyscrapers; recall that if $x \in X$ and $M$ is an \OXx-module, then the \emph{skyscraper (sheaf)} $\iota_{x,*}(M)$ is an \OX-module given by
\[ \iota_{x,*}(M)(U) = \begin{cases}
M & \text{if }x \in U, \\
0 & \text{otherwise}
\end{cases} \]
for $U \subseteq X$ open. This gives a functor $\OXxMod \to \OXMod$, which is a fully faithful right adjoint to taking stalks at $x$.

\begin{lemma}\label{skyscraper-definable}
For every $x \in X$ the functor $\iota_{x,*}$ is definable.
\end{lemma}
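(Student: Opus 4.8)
The plan is to verify the two conditions defining a definable functor separately. Commuting with products is immediate: as recalled just above, $\iota_{x,*}$ is a right adjoint (to the functor of taking stalks at $x$), hence it preserves all limits, products in particular. So the real content of the lemma is that $\iota_{x,*}$ commutes with direct (filtered) limits.

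To see this, I would work at the level of presheaves. Fix a directed system $(M_i)_{i \in I}$ of $\OXx$-modules. Colimits of presheaves of modules are computed sectionwise, so for an open $U \subseteq X$ the presheaf direct limit of the diagram $\bigl(\iota_{x,*}(M_i)\bigr)_{i \in I}$ has value $\varinjlim_i \iota_{x,*}(M_i)(U)$, which equals $\varinjlim_i M_i$ if $x \in U$ and $0$ if $x \notin U$, with restriction maps the identity between opens containing $x$ and zero otherwise. This is precisely the presheaf underlying $\iota_{x,*}(\varinjlim_i M_i)$. Since a direct limit in \OXMod\ is the sheafification of the corresponding presheaf direct limit, and since $\iota_{x,*}(\varinjlim_i M_i)$ is already a sheaf (the skyscraper construction lands in \OXMod), no sheafification is needed, and the canonical comparison map $\varinjlim_i \iota_{x,*}(M_i) \to \iota_{x,*}(\varinjlim_i M_i)$ is an isomorphism.

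I do not expect any real obstacle; the one point deserving care is that direct limits of sheaves are \emph{not} sectionwise in general, so the argument genuinely rests on the observation that here the sectionwise colimit already satisfies the sheaf axiom. Should one prefer to bypass that observation, an alternative is to check that the comparison map $\varinjlim_i \iota_{x,*}(M_i) \to \iota_{x,*}(\varinjlim_i M_i)$ is an isomorphism on stalks: at a point $y \in X$ it is the natural map $\varinjlim_i \bigl(\iota_{x,*}(M_i)\bigr)_y \to \bigl(\iota_{x,*}(\varinjlim_i M_i)\bigr)_y$, and since $\bigl(\iota_{x,*}(N)\bigr)_y$ is $N$ when $y$ lies in the closure $\overline{\{x\}}$ and $0$ otherwise, while stalks commute with direct limits, this map is bijective for every $y$, hence an isomorphism of sheaves.
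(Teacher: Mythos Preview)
Your proof is correct and follows essentially the same route as the paper: products are handled by the right adjoint property, and for direct limits you compute the presheaf colimit sectionwise, observe it is already the skyscraper $\iota_{x,*}(\varinjlim_i M_i)$ and hence a sheaf, so no sheafification is required. Your stalkwise alternative is a nice supplementary check not in the paper.
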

\begin{proof}
Commuting with products is clear. If $(M_i)_{i \in I}$ is a directed system of \OXx-modules, then $\iota_{x,*}(\varinjlim_{i \in I} M_i)$ coincides with the ``section-wise direct limit'', i.e.\ direct limit in the presheaf category. Since this direct limit is again a skyscraper, hence a sheaf, it is the direct limit also in the sheaf category.
\end{proof}

From Lemma \ref{skyscraper-definable} we know that skyscrapers built from pure-injective modules are c-pure-injective. It is easy to see that they are even g-pure-injective:

\begin{lemma}\label{pinj-sky}
Let $x \in X$ and $N$ be a pure-injective \OXx-module. Then $\iota_{x,*}(N)$ is a g-pure-injective \OX-module.
\end{lemma}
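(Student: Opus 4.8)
The plan is to reduce everything to the stalk via the adjunction between $\iota_{x,*}$ and the stalk functor, and then invoke the stalk-wise characterisation of g-purity together with the definition of pure-injectivity over $\OXx$. Concretely, to check that $\iota_{x,*}(N)$ is g-pure-injective I must show that $\Hom_{\OXMod}(-,\iota_{x,*}(N))$ carries a g-pure exact sequence $0 \to \mc A \to \mc B \to \mc C \to 0$ to an exact sequence; since $\Hom$ is always left exact, the only thing at stake is surjectivity of the restriction map $\Hom_{\OXMod}(\mc B,\iota_{x,*}(N)) \to \Hom_{\OXMod}(\mc A,\iota_{x,*}(N))$.

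First I would apply the adjunction recalled just before the statement: for any $\mc M \in \OXMod$ one has a natural isomorphism $\Hom_{\OXMod}(\mc M,\iota_{x,*}(N)) \cong \Hom_{\OXxMod}(\mc M_x, N)$, since $\iota_{x,*}$ is right adjoint to taking stalks at $x$. Under this identification, the map in question becomes the restriction map $\Hom_{\OXxMod}(\mc B_x, N) \to \Hom_{\OXxMod}(\mc A_x, N)$ induced by $\mc A_x \to \mc B_x$.

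Next I would use the stalk-wise description of g-purity from Definition \ref{g-pure-def} (i.e.\ \cite[Proposition 3.2]{EEO}): because $0 \to \mc A \to \mc B \to \mc C \to 0$ is g-pure, the sequence of \OXx-modules $0 \to \mc A_x \to \mc B_x \to \mc C_x \to 0$ is pure-exact. Since $N$ is pure-injective over \OXx, the functor $\Hom_{\OXxMod}(-,N)$ is exact on this pure-exact sequence, so $\Hom_{\OXxMod}(\mc B_x, N) \to \Hom_{\OXxMod}(\mc A_x, N)$ is surjective. Transporting back along the adjunction isomorphism gives the desired surjectivity, and hence $\iota_{x,*}(N)$ is g-pure-injective.

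There is no real obstacle here: the argument is a formal consequence of the adjunction, the stalk-wise criterion for g-purity, and the definition of pure-injectivity. The only point deserving a word of care is the naturality of the adjunction isomorphism in $\mc M$, which is what ensures that the restriction map on the sheaf side corresponds precisely to the restriction map on the stalk side; this is standard and I would simply remark on it rather than belabour it.
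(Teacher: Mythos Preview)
Your proof is correct and follows essentially the same route as the paper: both use the stalk--skyscraper adjunction to translate the surjectivity of $\Hom_{\OXMod}(\mc B,\iota_{x,*}(N)) \to \Hom_{\OXMod}(\mc A,\iota_{x,*}(N))$ into surjectivity of $\Hom_{\OXxMod}(\mc B_x,N) \to \Hom_{\OXxMod}(\mc A_x,N)$, and then invoke pure-injectivity of $N$ together with the stalk-wise characterisation of g-purity. Your version is slightly more explicit about left exactness of $\Hom$ and naturality of the adjunction, but there is no substantive difference.
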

\begin{proof}
Let $\mc A \ito \mc B$ be a g-pure monomorphism in \OXMod. Using the adjunction, checking that
\[ \Hom_{\OXMod}(\mc B, \iota_{x,*}(N)) \to \Hom_{\OXMod}(\mc A, \iota_{x,*}(N)) \]
is surjective is equivalent to checking that
\[ \Hom_{\OXxMod}(\mc B_x, N) \to \Hom_{\OXxMod}(\mc A_x, N) \]
is surjective. But this is true since $\mc A_x \ito \mc B_x$ is a pure mono in \OXxMod.
\end{proof}

The preceding observation allows us to establish a property of g-purity similar to that of c-purity:

\begin{cor}\label{g-purity-check}
A short exact sequence in \OXMod\ is g-pure-exact if and only if it remains exact after applying the functor $\Hom_{\OXMod}(-,\mc N)$ for every g-pure-injective \OX-module $\mc N$.
\end{cor}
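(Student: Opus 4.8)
The forward implication needs no work: by the very definition of g-pure-injectivity, if $0 \to \mc A \to \mc B \to \mc C \to 0$ is g-pure-exact then $\Hom_{\OXMod}(-, \mc N)$ is exact on it for every g-pure-injective $\mc N$. So the plan is entirely about the converse.

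For the converse, suppose $0 \to \mc A \to \mc B \to \mc C \to 0$ is short exact and $\Hom_{\OXMod}(-, \mc N)$ is exact on it for every g-pure-injective \OX-module $\mc N$; I want to conclude that it is g-pure-exact, i.e.\ (by definition) that each stalk sequence $0 \to \mc A_x \to \mc B_x \to \mc C_x \to 0$ is pure-exact in \OXxMod. First note that taking stalks is exact, so these stalk sequences are genuinely short exact, and it then remains only to check that $\mc A_x \to \mc B_x$ is a pure monomorphism. Here I would invoke the standard fact that in a module category a monomorphism $f\colon A \to B$ is pure precisely when $\Hom(B, N) \to \Hom(A, N)$ is surjective for every pure-injective module $N$: one direction is the definition of pure-injectivity, and the other follows by taking $N$ to be a pure-injective hull of $A$ and using that a monomorphism which becomes a pure monomorphism after post-composition was already one.

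So fix $x \in X$ and a pure-injective $\OXx$-module $N$. By Lemma \ref{pinj-sky} the skyscraper $\iota_{x,*}(N)$ is a g-pure-injective \OX-module, hence by hypothesis $\Hom_{\OXMod}(\mc B, \iota_{x,*}(N)) \to \Hom_{\OXMod}(\mc A, \iota_{x,*}(N))$ is surjective. Using the adjunction $(-)_x \dashv \iota_{x,*}$ exactly as in the proof of Lemma \ref{pinj-sky}---its naturality in the first variable makes the identifications compatible with the restriction maps---this surjection is identified with $\Hom_{\OXxMod}(\mc B_x, N) \to \Hom_{\OXxMod}(\mc A_x, N)$. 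As $N$ ranges over all pure-injective $\OXx$-modules, the module-theoretic criterion recalled above shows that $\mc A_x \to \mc B_x$ is a pure monomorphism, and since $x$ was arbitrary the original sequence is g-pure-exact.

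This argument is essentially formal and I do not anticipate a genuine obstacle; the only points requiring a little care are recording that the stalk functor is exact (so that the stalk sequences really are short exact before one speaks of their purity) and citing correctly the fact that the purity of a short exact sequence of modules is detected by mapping into the pure-injectives.
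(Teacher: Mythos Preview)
Your proof is correct and follows essentially the same approach as the paper: reduce to stalks via the adjunction $(-)_x \dashv \iota_{x,*}$, use Lemma~\ref{pinj-sky} to know skyscrapers of pure-injectives are g-pure-injective, and then invoke the fact that purity of a monomorphism in a module category is detected by mapping into pure-injectives. The paper's proof is terser but lists exactly these ingredients, and the justification you give for the module-theoretic criterion (factor a pure embedding into a pure-injective hull through $f$) is precisely the general argument the paper spells out immediately after the proof.
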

\begin{proof}
The ``only if'' part is clear. The ``if'' follows from the fact that purity in \OXxMod\ (where $x \in X$) can be checked using the functors $\Hom_{\OXxMod}(-,N)$, where $N$ is pure-injective, the adjunction between stalk and skyscraper, and Lemma \ref{pinj-sky}.
\end{proof}

We recall the very general argument for the fact referred to in the above proof, since we use it elsewhere. If $f\colon A\to B$ is an embedding and the notion of purity is such that there is a pure embedding $h\colon A\to N$ for some pure-injective $N$, and if this factors through $g\colon B\to N$, then, using $gf$ pure implies $f$ pure, we deduce purity of $f$.

We proceed with investigating what purity-related notions are preserved under various functors between sheaf categories. Let $U \subseteq X$ be open; then there are the following three functors:
\begin{enumerate}
\item $\iota_{U,!}\colon \OUMod\to\OXMod$, the \emph{extension by zero},
\item $(-)|_U\colon \OXMod\to\OUMod$, the \emph{restriction},
\item $\iota_{U,*}\colon \OUMod\to\OXMod$, the \emph{direct image} (also called the \emph{pushforward}).
\end{enumerate}
These three form an adjoint triple
\[ \iota_{U,!} \dashv (-)|_U \dashv \iota_{U,*} \]
with the outer two functors fully faithful---composing any of them with the restriction gives the identity. Consequently, all the three functors preserve stalks at any point of $U$. Finally, $\iota_{U,!}$ is always exact; see e.g.\ \cite[\S\S II.4, II.6]{I} for details.

\begin{lemma}\label{restriction-definable}
Let $U \subseteq X$ be an open set. Then the restriction functor $(-)|_U$ is definable and preserves g-pure-exactness and g-pure-injectivity.
\end{lemma}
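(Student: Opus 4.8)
The plan is to prove definability first, then deduce preservation of g-pure-exactness and g-pure-injectivity from the already-established machinery. For definability, I would use the fact that the restriction functor $(-)|_U$ sits in the adjoint triple $\iota_{U,!} \dashv (-)|_U \dashv \iota_{U,*}$. Being a left adjoint (to $\iota_{U,*}$), it automatically commutes with all colimits, in particular with direct limits. Being a right adjoint (to $\iota_{U,!}$), it automatically commutes with all limits, in particular with products. Hence $(-)|_U$ commutes with both products and direct limits, which is precisely the definition of a definable functor. This also requires knowing that $\OUMod$ is itself definable, which is covered by the Setup hypothesis for this section whenever $U$ is involved.

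For preservation of g-pure-exactness: a short exact sequence $0 \to \mc A \to \mc B \to \mc C \to 0$ in $\OXMod$ is g-pure exactly when each stalk sequence $0 \to \mc A_x \to \mc B_x \to \mc C_x \to 0$ is pure-exact in $\OXxMod$. Since restriction to an open $U$ preserves stalks at every point $x \in U$ (as noted in the excerpt, composing any of the triple with restriction recovers the identity on stalks over $U$), the restricted sequence over $U$ has exactly the stalks $\mc A_x, \mc B_x, \mc C_x$ for $x \in U$, all of which are pure-exact by hypothesis. Thus the restricted sequence is g-pure in $\OUMod$. Note this argument does not even need definability — purely the stalk characterization of g-purity together with the stalk-preservation property of restriction suffices.

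For preservation of g-pure-injectivity, I would use Corollary~\ref{g-purity-check} together with the adjunction $\iota_{U,!} \dashv (-)|_U$. Let $\mc N$ be a g-pure-injective $\OX$-module; we must show $\mc N|_U$ is g-pure-injective in $\OUMod$. Take a g-pure monomorphism $\mc A \ito \mc B$ in $\OUMod$. Applying the exact functor $\iota_{U,!}$ gives a short exact sequence in $\OXMod$; I claim it is g-pure, because $\iota_{U,!}$ preserves stalks over $U$ and has zero stalks outside $U$, so every stalk sequence is either one of the (pure-exact) stalk sequences of the original or the trivial sequence $0 \to 0 \to 0 \to 0$. Then $\Hom_{\OXMod}(\iota_{U,!}\mc B, \mc N) \to \Hom_{\OXMod}(\iota_{U,!}\mc A, \mc N)$ is surjective since $\mc N$ is g-pure-injective, and by the adjunction this is the map $\Hom_{\OUMod}(\mc B, \mc N|_U) \to \Hom_{\OUMod}(\mc A, \mc N|_U)$, which is therefore surjective as well; hence $\mc N|_U$ is g-pure-injective. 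The only mild subtlety — the ``main obstacle,'' such as it is — is verifying that $\iota_{U,!}$ sends g-pure monomorphisms in $\OUMod$ to g-pure monomorphisms in $\OXMod$, i.e.\ correctly handling the stalks outside $U$; but since $\iota_{U,!}$ is exact and its stalks vanish off $U$, this is immediate from the stalk-wise characterization of g-purity.
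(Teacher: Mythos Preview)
Your proof is correct and follows essentially the same route as the paper: definability via the adjoint triple, preservation of g-pure-exactness via stalk preservation, and preservation of g-pure-injectivity via the adjunction $\iota_{U,!} \dashv (-)|_U$ together with the observation that $\iota_{U,!}$ sends g-pure monos to g-pure monos (stalks unchanged on $U$, zero off $U$). The reference to Corollary~\ref{g-purity-check} is unnecessary, since you are directly verifying the definition of g-pure-injectivity rather than testing exactness of a sequence, but this does not affect the argument.
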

\begin{proof}
Definability follows from the fact that this functor has both a right and a left adjoint. Since restriction preserves stalks, it preserves g-pure-exactness. Finally, let $\mc N \in \OXMod$ be g-pure-injective and $\mc A \ito \mc B$ a g-pure monomorphism of \OU-modules. Then the monomorphism $\iota_{U,!}(\mc A) \ito \iota_{U,!}(\mc B)$ is g-pure, because on $U$ the stalks remain the same, whereas outside $U$ they are zero (see, e.g., \cite[p.\ 106]{I}). Straightforward use of the adjunction then implies that $\mc N|_U$ is g-pure-injective in \OUMod.
\end{proof}


%

Note that extension by zero does not commute with products in general, as the following example shows, therefore there is no hope for definability. It also preserves neither c- nor g-pure-injectivity:

\begin{exmpl}\label{ext-by-zero-nondef}
Let $p \in \Z$ be a prime number, $X = \Spec(\Z)$ and $U = \Spec(\Z[p^{-1}]) \subseteq X$. For a prime number $q \neq p$, let $\mc N^q = \iota_{(q),*}(\Z[p^{-1}]/(q))$, i.e.\ the skyscraper coming from the $q$-element group, regarded as an \OU-module. By Lemma \ref{pinj-sky}, this is a g-pure-injective \OU-module, and so is $\mc N = \prod_{q \neq p}\mc N^q$. Let $\mc M = \iota_{U,!}(\mc N)$; we are going to show that the global sections of $\mc M$ are not a pure-injective $\Z$-module, thus (Corollary \ref{open-pinj}) $\mc M$ is not even c-pure-injective.

Since extension by zero preserves all sections within $U$, $\mc M(U)$ 
is the product of all $q$-element groups for $q$ a prime distinct from $p$. By \cite[Definition 6.1]{I}, $\mc M(X)$ consists of those elements of $\mc M(U)$, whose support in $U$ is closed in $X$. Now every element of $\mc M(U)$ is either torsion, in which case its support is a finite union of closed points, therefore closed in $X$, or torsion-free, which has a non-zero stalk at the generic point and hence on each point of $U$, but $U$ is not closed in $X$. We infer that $\mc M(X)$ is the torsion part of $\mc M(U)$, in other words the direct sum inside the direct product. However, this is not pure-injective, as it is torsion and reduced, but not bounded.

Finally, the global sections of the sheaf $\iota_{U,!}(\mc N^q)$ form the $q$-element group, hence we see that the global sections of the product $\mc P = \prod_{q \neq p}\iota_{U,!}(\mc N^q)$ are different from $\mc M(X)$ (namely, $\mc P(X)$ is the product of $q$-element groups over all primes $q\neq p$), hence $\iota_{U,!}$ does not commute with products.
\end{exmpl}

Here is what we can actually prove:

\begin{lemma}
Let $U \subseteq X$ be an open set. Then the functor $\iota_{U,!}$ preserves and reflects c- and g-pure-exact sequences, and reflects c- and g-pure-injectivity.
\end{lemma}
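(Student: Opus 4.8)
The plan is to deduce all six assertions from one principle: the restriction functor $(-)|_U$ is definable and preserves both purities as well as both pure-injectivities, and it is a retraction of $\iota_{U,!}$, since $(-)|_U\circ\iota_{U,!}\cong\mathrm{id}$ (this is the fully faithfulness of $\iota_{U,!}$ recorded in the adjoint triple above). Indeed, by Lemma \ref{restriction-definable} the functor $(-)|_U$ is definable and preserves g-pure-exact sequences and g-pure-injectivity, and being definable it also preserves c-pure-exact sequences and c-pure-injectivity by \cite[Corollary 18.2.5]{P}. I will also use that $\iota_{U,!}$ is exact.

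The \emph{reflection} statements follow by applying $(-)|_U$. If $\sigma$ is a (short exact) sequence of \OU-modules such that $\iota_{U,!}\sigma$ is c-pure-exact, resp.\ g-pure-exact, in \OXMod, then $\sigma\cong(\iota_{U,!}\sigma)|_U$ is c-pure-exact, resp.\ g-pure-exact, because restriction preserves these; if one does not assume $\sigma$ exact to begin with, its exactness follows from exactness of $(-)|_U$. In the same way, if $\iota_{U,!}(\mc N)$ is c-pure-injective, resp.\ g-pure-injective, then $\mc N\cong(\iota_{U,!}\mc N)|_U$ inherits that property. This settles ``reflects c- and g-pure-exact sequences'' and ``reflects c- and g-pure-injectivity''.

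For \emph{preservation} of pure-exactness, let $\sigma\colon 0\to\mc A\to\mc B\to\mc C\to 0$ be c-pure-exact, resp.\ g-pure-exact, in \OUMod. Since $\iota_{U,!}$ is exact, $\iota_{U,!}\sigma$ is again a short exact sequence, and I would test its purity by applying $\Hom_{\OXMod}(-,\mc N)$ with $\mc N$ ranging over the c-pure-injective, resp.\ g-pure-injective, \OX-modules: this suffices for c-purity because \OXMod\ is definable, hence has enough c-pure-injectives, together with the factoring remark recorded just after Corollary \ref{g-purity-check}, and for g-purity it is exactly Corollary \ref{g-purity-check}. So fix such an $\mc N$. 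The adjunction $\iota_{U,!}\dashv(-)|_U$ gives a natural isomorphism of complexes $\Hom_{\OXMod}(\iota_{U,!}\sigma,\mc N)\cong\Hom_{\OUMod}(\sigma,\mc N|_U)$; since $(-)|_U$ preserves the relevant pure-injectivity, $\mc N|_U$ is c-pure-injective, resp.\ g-pure-injective, in \OUMod, so the right-hand complex is exact because $\sigma$ is c-pure, resp.\ g-pure. Hence the left-hand complex is exact and $\iota_{U,!}\sigma$ is c-pure, resp.\ g-pure.

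I do not expect a genuine obstacle: the argument is entirely formal once Lemma \ref{restriction-definable} is available. The one point that needs care is to route everything through the \emph{restriction} functor, the right adjoint of $\iota_{U,!}$, rather than through $\iota_{U,*}$; the statement deliberately does \emph{not} claim that $\iota_{U,!}$ preserves pure-injectivity, and Example \ref{ext-by-zero-nondef} shows it does not, so the proof must not accidentally establish more than is asserted. For g-purity one may alternatively avoid pure-injectives entirely: g-purity is checked on stalks, and the stalk of $\iota_{U,!}\sigma$ at a point of $U$ equals the corresponding stalk of $\sigma$ while at a point outside $U$ it is the zero sequence, both of which are pure-exact; this re-proves preservation of g-pure-exactness directly.
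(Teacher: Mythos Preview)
Your argument is correct and essentially identical to the paper's: the paper also tests purity of $\iota_{U,!}\sigma$ against c-/g-pure-injectives $\mc N$, passes through the adjunction $\iota_{U,!}\dashv(-)|_U$, invokes Lemma \ref{restriction-definable} to know $\mc N|_U$ is again pure-injective, and handles all reflection claims by postcomposing with $(-)|_U$. Your additional stalkwise remark for g-purity is a valid alternative but not needed.
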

\begin{proof}
A monomorphism $\mc A \ito \mc B$ in \OXMod\ is c-pure (g-pure) if and only if the contravariant functor $\Hom_{\OXMod}(-,\mc N)$ turns it into a surjection for every c-pure-injective (g-pure-injective) \OX-module $\mc N$ (for g-purity, this is Corollary \ref{g-purity-check}); similarly for monomorphisms in \OUMod.

Due to the adjunction, the surjectivity of
\[ \Hom_{\OXMod}(\iota_{U,!}(\mc B), \mc N) \to \Hom_{\OXMod}(\iota_{U,!}(\mc A), \mc N) \]
is equivalent to the surjectivity of
\[ \Hom_{\OUMod}(\mc B, \mc N|_U) \to \Hom_{\OUMod}(\mc A, \mc N|_U) \]
and by Lemma \ref{restriction-definable}, $\mc N|_U$ is c-pure-injective (g-pure-injective) if $\mc N$ is, so the preservation of pure-exactness follows.

If a short exact sequence in \OUMod\ becomes c-pure-exact (g-pure-exact) after applying $\iota_{U,!}$, then the original sequence has to be c-pure-exact (g-pure-exact), too, since it can be recovered by applying the restriction functor, which preserves both types of pure-exactness by Lemma \ref{restriction-definable}. The same argument shows that $\iota_{U,!}$ reflects pure-injectivities as well.
\end{proof}

Finally, we investigate the properties of direct image.

\begin{lemma}
Let $U \subseteq X$ be an open set. Then the functor $\iota_{U,*}$ preserves and reflects c- and g-pure-injectivity.
\end{lemma}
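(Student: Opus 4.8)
The plan is to derive everything from the adjoint triple $\iota_{U,!} \dashv (-)|_U \dashv \iota_{U,*}$ recorded above, together with Lemma~\ref{restriction-definable}. For the \emph{preservation} half I would invoke the standard fact that if $L \dashv R$ is an adjoint pair and $L$ carries pure monomorphisms to pure monomorphisms, then $R$ carries pure-injectives to pure-injectives: given a pure monomorphism $\mc A \ito \mc B$, the adjunction identifies the map $\Hom(\mc B, R\mc N) \to \Hom(\mc A, R\mc N)$ with $\Hom(L\mc B, \mc N) \to \Hom(L\mc A, \mc N)$, and the latter is surjective because $L\mc A \ito L\mc B$ is pure and $\mc N$ is pure-injective. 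Taking $L = (-)|_U$, $R = \iota_{U,*}$, and noting that restriction is exact and, by Lemma~\ref{restriction-definable}, definable (hence preserves c-pure-exact sequences) as well as a preserver of g-pure-exact sequences, we get that $\iota_{U,*}(\mc N)$ is c-pure-injective, resp.\ g-pure-injective, whenever $\mc N$ is.

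For the \emph{reflection} half I would use that $\iota_{U,*}$ is a section of restriction, i.e.\ $(-)|_U \circ \iota_{U,*}$ is the identity on \OUMod\ (composing either outer functor of the triple with restriction gives the identity). Thus, if $\iota_{U,*}(\mc N)$ is c-pure-injective, resp.\ g-pure-injective, in \OXMod, then $\mc N$, being its restriction to $U$, is c-pure-injective, resp.\ g-pure-injective, in \OUMod\ by the part of Lemma~\ref{restriction-definable} which says restriction preserves these properties. This is essentially verbatim the argument used in the preceding lemma for $\iota_{U,!}$, with $\iota_{U,!}$ replaced by $\iota_{U,*}$.

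I do not expect a genuine obstacle. The only point requiring a moment's attention is in the preservation half: one must be sure that a (c- or g-)pure monomorphism in \OXMod\ restricts to a (c- or g-)pure monomorphism in \OUMod, not merely to a monomorphism. This holds because restriction is exact, so the short exact sequence determined by the given monomorphism restricts to a short exact sequence, which Lemma~\ref{restriction-definable} then guarantees is again (c- or g-)pure-exact; for the g-pure case one could equally observe that restriction preserves stalks at points of $U$ and check g-purity stalkwise.
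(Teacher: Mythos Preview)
Your proposal is correct and follows essentially the same route as the paper: preservation via the adjunction $(-)|_U \dashv \iota_{U,*}$ together with the fact (Lemma~\ref{restriction-definable}) that restriction sends c- and g-pure monomorphisms to c- and g-pure monomorphisms, and reflection via the identity $(-)|_U \circ \iota_{U,*} = \mathrm{id}$ together with the fact that restriction preserves pure-injectivity. The paper additionally notes that the c-pure preservation statement can be obtained directly from Lemma~\ref{reflection}, viewing \OUMod\ as a reflective subcategory of \OXMod\ via $\iota_{U,*}$, but this is merely an alternative.
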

\begin{proof}
That $\iota_{U,*}$ reflects c-pure-injectivity (g-pure-injectivity) is clear because its composition with restriction to $U$ produces the identity functor. If $\mc A \ito \mc B$ is c-pure (g-pure) in \OXMod, then so is $\mc A|_U \ito \mc B|_U$, hence
\[ \Hom_{\OUMod}(\mc B|_U, \mc N) \to \Hom_{\OUMod}(\mc A|_U, \mc N) \]
is surjective for $\mc N \in \OUMod$ c-pure-injective (g-pure-injective). The adjunction implies the surjectivity of
\[ \Hom_{\OXMod}(\mc B, \iota_{U,*}(\mc N)) \to \Hom_{\OXMod}(\mc A, \iota_{U,*}(\mc N)) \]
and we conclude that $\iota_{U,*}(\mc N)$ is c-pure-injective (g-pure-injective).

Alternatively, for the c-pure-injectivity, we may argue as follows: Note that the full faithfulness of $\iota_{U,*}$ allows us to view $\OUMod$ as a reflective subcategory of $\OXMod$, restriction functor being the reflector. The statement about c-pure-injectivity thus follows from Lemma \ref{reflection}.
\end{proof}

For special open sets $U$ we obtain a definable functor:

\begin{lemma}\label{direct-image-definable}
Let $U \subseteq X$ be an open set such that $\iota_U\colon U\ito X$ is a concentrated morphism (i.e.\ for every $V \subseteq X$ affine open, the intersection $U \cap V$ is concentrated). Then the functor $\iota_{U,*}$ is definable. If $X$ is concentrated, then the essential image of $\iota_{U,*}$ is a definable subcategory of \OXMod.
\end{lemma}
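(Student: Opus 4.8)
For the first statement: being a right adjoint, $\iota_{U,*}$ commutes with products, so it remains to show that it commutes with direct limits. Given a directed system $(\mc M_i)_{i\in I}$ in \OUMod, I would evaluate the canonical comparison morphism $\varinjlim_i \iota_{U,*}(\mc M_i)\to\iota_{U,*}(\varinjlim_i\mc M_i)$ in \OXMod\ on the sections over an arbitrary affine open $V\subseteq X$. Its source becomes $\varinjlim_i\mc M_i(V\cap U)$: indeed $(\iota_{U,*}\mc M_i)(V)=\mc M_i(V\cap U)$, and $V$ is concentrated, so sections over $V$ commute with direct limits by Lemma \ref{sections-definable}. Its target becomes $(\varinjlim_i\mc M_i)(V\cap U)=\varinjlim_i\mc M_i(V\cap U)$, because $V\cap U=\iota_U^{-1}(V)$ is concentrated by hypothesis, so Lemma \ref{sections-definable} applies once more. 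Hence the comparison morphism is an isomorphism on the sections over every affine open; as these form a basis of $X$, it is an isomorphism of sheaves, and $\iota_{U,*}$ commutes with direct limits, hence is definable.

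For the second statement, let $\mathcal E\subseteq\OXMod$ be the essential image of $\iota_{U,*}$. Since $\iota_{U,*}$ is fully faithful with left adjoint $(-)|_U$, it realizes \OUMod\ as a reflective subcategory of \OXMod, and $\mathcal E$ is precisely the class of those $\mc F$ for which the unit $\eta_{\mc F}\colon\mc F\to\iota_{U,*}(\mc F|_U)$ is an isomorphism. Unravelling the adjunction, on sections over an open $V$ the map $\eta_{\mc F}$ is the restriction $\mc F(V)\to\mc F(V\cap U)$ (using $(\iota_{U,*}(\mc F|_U))(V)=\mc F(V\cap U)$), and a morphism of sheaves is an isomorphism precisely when it is so on the sections over each member of a basis of opens. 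Therefore
\[ \mathcal E=\bigcap_V\mathcal E_V,\qquad\mathcal E_V=\{\,\mc F\in\OXMod:\mc F(V)\to\mc F(V\cap U)\text{ is an isomorphism}\,\}, \]
the intersection over all affine open $V\subseteq X$. As any intersection of definable subcategories is definable, it suffices to prove that each $\mathcal E_V$ is definable.

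Fix an affine open $V$; then $V$ and $V\cap U$ are both concentrated. By Lemmas \ref{sections-definable} and \ref{restriction-definable}, the functors $F\colon\mc F\mapsto\mc F(V)$ and $G\colon\mc F\mapsto\mc F(V\cap U)=(\mc F|_U)(V\cap U)$ are definable functors $\OXMod\to\mathrm{Ab}$ (we regard the section functors as valued in $\mathrm{Ab}$ via the forgetful functors from the relevant module categories, which are themselves definable), and the restriction maps assemble into a natural transformation $\alpha\colon F\Rightarrow G$, so that $\mathcal E_V=\{\mc F:\ker\alpha_{\mc F}=0\}\cap\{\mc F:\operatorname{coker}\alpha_{\mc F}=0\}$. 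Since products and direct limits are exact functors on $\mathrm{Ab}$, the functors $\mc F\mapsto\ker\alpha_{\mc F}$ and $\mc F\mapsto\operatorname{coker}\alpha_{\mc F}$ again commute with products and direct limits, hence are definable. Finally, for any definable functor $H\colon\OXMod\to\mathrm{Ab}$ the class $\{\mc F:H(\mc F)=0\}$ is a definable subcategory of \OXMod: it is clearly closed under products and direct limits (as $H$ commutes with these), and closed under pure subobjects because $H$, being definable, carries pure monomorphisms to monomorphisms by \cite[Corollary 18.2.5]{P}. Hence $\mathcal E_V$, and with it $\mathcal E$, is definable.

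I expect this to be more a matter of assembly than of clearing a real obstacle. The two points needing genuine care are: recognising that $\mathcal E$ is cut out by the family of conditions ``$\mc F(V)\to\mc F(V\cap U)$ is an isomorphism'' ($V$ affine open), and checking that each such condition is definable --- which works because it is an exactness statement relating the section functors over the \emph{concentrated} opens $V$ and $V\cap U$. The hypothesis that $\iota_U$ be a concentrated morphism is used exactly to guarantee that $V\cap U$ is concentrated, so that Lemma \ref{sections-definable} applies; the standing assumption (implied by $X$ being concentrated) that \OXMod\ and \OUMod\ are definable is what makes the statement meaningful.
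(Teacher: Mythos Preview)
Your proof is correct and follows essentially the same approach as the paper's. The only difference is cosmetic: for the first statement you spell out directly why $\iota_{U,*}$ commutes with direct limits (by checking sections over affine opens, using Lemma~\ref{sections-definable} on both $V$ and $V\cap U$), whereas the paper simply cites \cite[Lemma~B.6]{TT}; for the second statement your argument via the kernel and cokernel of the restriction map $\mc F(V)\to\mc F(V\cap U)$ is exactly the paper's, only you range over affine opens rather than all concentrated opens, which makes no difference since both form a basis.
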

\begin{proof}
Commuting with products follows from the fact that $\iota_{U,*}$ is right adjoint. Commuting with direct limits is a special case of \cite[Lemma B.6]{TT}.

If $X$ is concentrated, then $U$ is concentrated as well by \cite[Lemma 16]{M2}, and the intersection of any concentrated open $V \subseteq X$ with $U$ is concentrated, too. Therefore, by Lemma \ref{sections-definable}, for each concentrated $V \subseteq X$, we have the definable functors of sections over $V$ and $U \cap V$. Since direct limits and direct products are exact in categories of modules, we have another two definable functors, $K_V$ and $C_V$, assigning to $\mc M$ the kernel and the cokernel of the restriction $\mc M(V) \to \mc M(U \cap V)$, respectively.

As concentrated open sets form a basis of $X$, we see that $\mc M \in \OXMod$ is in the essential image of $\iota_{U,*}$ if and only if $K_V(\mc M) = C_V(\mc M) = 0$ for every $V \subseteq X$ open concentrated. Therefore the essential image in question, being the intersection of kernels of definable functors, is a definable subcategory.
\end{proof}

\begin{remark}
Let us point out that \cite[Lemma B.6]{TT} has a much wider scope: Indeed, for any concentrated map of schemes $f$, the direct image functor $f_*$ is definable.
\end{remark}

Let us now focus on the weaker notion of geometric purity. We start with observing that there is a plenty of naturally arising g-pure monomorphisms around.

\begin{lemma}\label{UV-pure}
Let $U \subseteq V$ be open subsets of $X$ and denote by $\iota_U$, $\iota_V$ their inclusions into $X$. Then for any $\mc M \in \OXMod$ the natural map
\[ \iota_{U,!}(\mc M|_U) \to \iota_{V,!}(\mc M|_V) \]
is a g-pure monomorphism.
\end{lemma}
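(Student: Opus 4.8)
The plan is to check g-purity stalk-wise, as permitted by Definition \ref{g-pure-def}. Fix a point $x \in X$. The map $\iota_{U,!}(\mc M|_U) \to \iota_{V,!}(\mc M|_V)$ is built from the unit/counit of the adjunctions, and since $\iota_{U,!}$, $\iota_{V,!}$ and the restrictions all preserve stalks at points of the respective open sets (as noted after the adjoint triple), the stalk of this map at $x$ is easy to compute: if $x \in U$, then both stalks are $\mc M_x$ and the map is the identity; if $x \in V \setminus U$, then the left-hand stalk is $0$; and if $x \notin V$, then both stalks are $0$. In every case the induced map on stalks $0 \to \mc M_x$ or $\mathrm{id}\colon \mc M_x \to \mc M_x$ or $0 \to 0$ is split mono, hence certainly a pure monomorphism of \OXx-modules.

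First I would record that the map is a monomorphism at all: since $\iota_{U,!}$ is exact and the only nonzero stalks of $\iota_{U,!}(\mc M|_U)$ are those at points of $U$, where the map is an isomorphism onto the corresponding stalk of $\iota_{V,!}(\mc M|_V)$, injectivity on all stalks gives injectivity of the sheaf map. Then I would spell out the three cases above for the stalk of the map, using the description of the stalk of an extension-by-zero sheaf (zero outside the open set, unchanged inside) — this is exactly the computation already invoked in the proof of Lemma \ref{restriction-definable}, citing \cite[p.\ 106]{I}. Finally, since a split monomorphism of modules is pure, each stalk map is pure-exact as a (trivially short exact) sequence, so by the stalk-wise criterion for g-purity the map $\iota_{U,!}(\mc M|_U) \to \iota_{V,!}(\mc M|_V)$ is a g-pure monomorphism.

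There is essentially no obstacle here: the content is entirely the identification of the stalks, and once one observes that every stalk map is in fact split (indeed either an isomorphism or a zero map into $\mc M_x$), g-purity is immediate because split exact sequences are a fortiori pure. The only point requiring a modicum of care is being explicit about the canonical map being considered — it is the one adjoint to the identity $\mc M|_U \to (\iota_{V,!}(\mc M|_V))|_U \cong \mc M|_V|_U$, equivalently the composite $\iota_{U,!}(\mc M|_U) \to \iota_{U,!}((\iota_{V,!}(\mc M|_V))|_U) \to \iota_{V,!}(\mc M|_V)$ using the counit of $\iota_{U,!} \dashv (-)|_U$ — but on stalks this unwinds to exactly the maps described, so no difficulty arises.
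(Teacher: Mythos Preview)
Your proof is correct and follows essentially the same approach as the paper: check stalkwise and observe that at each point the induced map is either the identity or a map out of the zero module, hence split mono and in particular pure. The paper's version is simply terser, collapsing your three cases into two (``$x\in U$'' and ``otherwise'').
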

\begin{proof}
Passing to stalks at $x \in X$, we see that the map is either the identity (if $x \in U$) or a map from the zero module (otherwise), hence a pure monomorphism of \OXx-modules. Hence the map is even stalkwise split.
\end{proof}

Recall that a sheaf is called \emph{flasque} if all its restriction maps are surjective. Recall also that for \OX-modules $\mc A$, $\mc B$, the \emph{sheaf hom}, which we denote by $\HOM_X(\mc A, \mc B)$, is the \OX-module defined via
\[ \HOM_X(\mc A, \mc B)(U) = \Hom_{\OUMod}(\mc A|_U, \mc B|_U) \]
for every open $U \subseteq X$, with the obvious restriction maps.

\begin{cor}\label{g-pure-flasque}
If $\mc N \in \OXMod$ is g-pure-injective and $\mc A \in \OXMod$ arbitrary, then the sheaf hom \OX-module $\HOM_X(\mc A, \mc N)$ is flasque. In particular, $\mc N \cong \HOM_X(\OX, \mc N)$ is flasque.
\end{cor}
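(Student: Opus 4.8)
The plan is to check flasqueness directly from the definition, using the abundance of g-pure monomorphisms provided by Lemma~\ref{UV-pure} together with the g-pure-injectivity of $\mc N$. First I would reduce the task to showing global-to-local surjectivity: a sheaf $\mc F$ on $X$ is flasque once $\mc F(X)\to\mc F(U)$ is surjective for every open $U\subseteq X$, because for opens $U\subseteq V$ any section of $\mc F$ over $U$ can first be lifted to a global section and then restricted to $V$.

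Fix an open $U\subseteq X$. Unwinding the definition of the sheaf hom, the relevant restriction map is
\[ \Hom_{\OXMod}(\mc A,\mc N)=\HOM_X(\mc A,\mc N)(X)\longrightarrow\HOM_X(\mc A,\mc N)(U)=\Hom_{\OUMod}(\mc A|_U,\mc N|_U),\qquad f\longmapsto f|_U, \]
so I must show every $g\colon\mc A|_U\to\mc N|_U$ is the restriction of some $\mc A\to\mc N$. Via the adjunction $\iota_{U,!}\dashv(-)|_U$ --- whose unit is the identity, since $(\iota_{U,!}\mc B)|_U=\mc B$ --- the morphism $g$ corresponds to a morphism $\tilde g\colon\iota_{U,!}(\mc A|_U)\to\mc N$ with $\tilde g|_U=g$. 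By Lemma~\ref{UV-pure} applied with $V=X$, the counit map $c\colon\iota_{U,!}(\mc A|_U)\to\mc A$ is a g-pure monomorphism, and $c|_U=\mathrm{id}_{\mc A|_U}$ by the triangle identity. As $\mc N$ is g-pure-injective, $\tilde g$ factors as $\tilde g=f\circ c$ for some $f\colon\mc A\to\mc N$; restricting to $U$ gives $f|_U=f|_U\circ c|_U=\tilde g|_U=g$. This proves the surjectivity, hence $\HOM_X(\mc A,\mc N)$ is flasque.

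The final assertion then drops out: the natural isomorphism $\HOM_X(\OX,\mc N)\cong\mc N$ (on each open $U$ one has $\Hom_{\OUMod}(\OU,\mc N|_U)\cong\mc N(U)$, naturally in $U$) exhibits $\mc N$ as flasque. I do not expect a genuine obstacle; the only slightly delicate point is the bookkeeping of the adjunction, i.e.\ checking that $f$ restricts to $g$ exactly and not merely up to the canonical identification $(\iota_{U,!}(\mc A|_U))|_U=\mc A|_U$ --- which is precisely the identity $c|_U=\mathrm{id}$. Should one wish to bypass even this, one can instead run the same argument over each open $V$ separately, using that $\mc N|_V$ is g-pure-injective in the category of $\mc O_V$-modules by Lemma~\ref{restriction-definable} and that $\HOM_X(\mc A,\mc N)|_V\cong\HOM_V(\mc A|_V,\mc N|_V)$.
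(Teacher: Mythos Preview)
Your proof is correct and follows essentially the same approach as the paper: both apply $\Hom_{\OXMod}(-,\mc N)$ to the g-pure monomorphism of Lemma~\ref{UV-pure} and use the adjunction $\iota_{U,!}\dashv(-)|_U$ to identify the resulting surjection with the restriction map of $\HOM_X(\mc A,\mc N)$. The only cosmetic difference is that the paper works directly with arbitrary $U\subseteq V$, whereas you first reduce to $V=X$ and spell out the adjunction bookkeeping in more detail; neither changes the substance of the argument.
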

\begin{proof}
Apply the functor $\Hom_{\OXMod}(-,\mc N)$ to the g-pure monomorphism (Lemma \ref{UV-pure}) $\iota_{U,!}(\mc A|_U) \to \iota_{V,!}(\mc A|_V)$ and use the adjunction
\[ \Hom_{\OXMod}(\iota_{U,!}(\mc A|_U),\mc N) \cong \Hom_{\OUMod}(\mc A|_U,\mc N|_U). \]
\end{proof}


\begin{lemma}\label{shriek-star-pure}
Let $U \subseteq X$ be open. Then the natural map
\[ \iota_{U,!}(\mc M|_U) \to \iota_{U,*}(\mc M|_U) \]
is a g-pure monomorphism for every $\mc M \in \OXMod$.
\end{lemma}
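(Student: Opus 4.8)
The plan is to verify g-purity stalkwise, via the stalk criterion in Definition~\ref{g-pure-def}, and in fact to prove the stronger statement — exactly as in the proof of Lemma~\ref{UV-pure} — that the short exact sequence
\[
0 \to \iota_{U,!}(\mc M|_U) \to \iota_{U,*}(\mc M|_U) \to \mc C \to 0,
\]
with $\mc C$ the cokernel, becomes split after passing to stalks at every point of $X$. This will at the same time show that the natural map is a monomorphism.

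First I would identify the natural map: since $\iota_{U,*}$ is fully faithful, $(\iota_{U,*}(\mc M|_U))|_U = \mc M|_U$, and under the adjunction $(-)|_U \dashv \iota_{U,*}$ the map $\iota_{U,!}(\mc M|_U) \to \iota_{U,*}(\mc M|_U)$ is the one corresponding to the canonical isomorphism $(\iota_{U,!}(\mc M|_U))|_U \cong \mc M|_U$; in particular its restriction to $U$ is an isomorphism. Then I would split the stalk computation into two cases. For $x \in U$, both $\iota_{U,!}(\mc M|_U)$ and $\iota_{U,*}(\mc M|_U)$ restrict to $\mc M|_U$ on $U$, so the stalk map is the identity of $\mc M_x$; hence $\mc C_x = 0$ and the stalk sequence is trivially split. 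For $x \notin U$, the presheaf whose sheafification is $\iota_{U,!}(\mc M|_U)$ vanishes on every open set containing $x$ (no such open is contained in $U$), so $(\iota_{U,!}(\mc M|_U))_x = 0$ and the stalk sequence reduces to an isomorphism $(\iota_{U,*}(\mc M|_U))_x \cong \mc C_x$, again split. In either case the stalk sequence of $\OXx$-modules is split, in particular pure-exact, which is exactly g-purity.

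I do not expect any genuine obstacle: the argument is a direct analogue of the proof of Lemma~\ref{UV-pure}, and the only point needing a little care — the vanishing of the stalk of $\iota_{U,!}(\mc M|_U)$ outside $U$, together with the identification of the natural map — is standard and can be referenced from \cite[p.\ 106]{I}, which is already invoked in the proof of Lemma~\ref{restriction-definable}.
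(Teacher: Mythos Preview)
Your proof is correct and follows essentially the same approach as the paper: the paper's proof also checks stalkwise that the map is the identity for $x \in U$ and has zero domain for $x \notin U$, concluding that it is stalkwise split and hence a g-pure monomorphism. Your version simply spells out in more detail the identification of the natural map and the vanishing of $(\iota_{U,!}(\mc M|_U))_x$ for $x \notin U$.
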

\begin{proof}
Exactly the same as for Lemma \ref{UV-pure}---passing to stalks at $x \in X$, we see that the map is either the identity (if $x \in U$) or a map from the zero module (otherwise), hence a pure monomorphism of \OXx-modules.
\end{proof}

\begin{cor}\label{gpinj-split}
Let $\mc N$ be g-pure-injective \OX-module. Every restriction map in $\mc N$ is a split epimorphism. For every $U \subseteq X$ open, $\iota_{U,*}(\mc N|_U)$ is a direct summand of $\mc N$.
\end{cor}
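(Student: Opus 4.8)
The plan is to deduce both assertions from the g-pure monomorphisms produced in Lemmas \ref{UV-pure} and \ref{shriek-star-pure}, using only the defining property of g-pure-injectivity (that $\Hom_{\OXMod}(-,\mc N)$ turns g-pure monos into surjections) together with the standard adjunctions $\iota_{U,!}\dashv(-)|_U\dashv\iota_{U,*}$. First I would apply $\Hom_{\OXMod}(-,\mc N)$ to the g-pure monomorphism $\iota_{U,!}(\mc N|_U)\to\iota_{U,*}(\mc N|_U)$ of Lemma \ref{shriek-star-pure} with $\mc M=\mc N$. By g-pure-injectivity the induced map
\[
\Hom_{\OXMod}\bigl(\iota_{U,*}(\mc N|_U),\mc N\bigr)\longrightarrow\Hom_{\OXMod}\bigl(\iota_{U,!}(\mc N|_U),\mc N\bigr)
\]
is surjective. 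The right-hand side, via the adjunction $\iota_{U,!}\dashv(-)|_U$, is $\Hom_{\OUMod}(\mc N|_U,\mc N|_U)$, and the identity map of $\mc N|_U$ lives there; moreover that identity is the image of the counit (unit) map $\iota_{U,!}(\mc N|_U)\to\mc N$ under this isomorphism. A preimage of the identity is therefore a morphism $\iota_{U,*}(\mc N|_U)\to\mc N$ whose composite with the natural map $\iota_{U,!}(\mc N|_U)\to\iota_{U,*}(\mc N|_U)$ equals the natural map $\iota_{U,!}(\mc N|_U)\to\mc N$. On the other hand, the natural map $\mc N\to\iota_{U,*}(\mc N|_U)$ (the unit of $(-)|_U\dashv\iota_{U,*}$, which is the canonical "restrict then pushforward" comparison) composed with this new map gives an endomorphism of $\iota_{U,*}(\mc N|_U)$ which, after restricting to $U$, is the identity; since $(-)|_U$ is faithful on the image of $\iota_{U,*}$ (indeed $\iota_{U,*}$ is fully faithful), that endomorphism is the identity. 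Hence $\mc N\to\iota_{U,*}(\mc N|_U)\to\mc N$ exhibits $\iota_{U,*}(\mc N|_U)$ as a direct summand of $\mc N$.

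For the statement about restriction maps, fix open sets $U\subseteq V\subseteq X$; I must show the restriction map $\mc N(V)\to\mc N(U)$ is a split epimorphism of $\mc O_X(V)$-modules. Apply $\Hom_{\OXMod}(-,\mc N)$ to the g-pure monomorphism $\iota_{U,!}(\mc N|_U)\to\iota_{V,!}(\mc N|_V)$ of Lemma \ref{UV-pure} (again with $\mc M=\mc N$); by g-pure-injectivity the map
\[
\Hom_{\OXMod}\bigl(\iota_{V,!}(\mc N|_V),\mc N\bigr)\longrightarrow\Hom_{\OXMod}\bigl(\iota_{U,!}(\mc N|_U),\mc N\bigr)
\]
is surjective. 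Using the adjunction $\iota_{W,!}\dashv(-)|_W$ on both sides, this is precisely the restriction-of-hom map $\Hom_{\OVMod}(\mc N|_V,\mc N|_V)\to\Hom_{\OUMod}(\mc N|_U,\mc N|_U)$, i.e.\ the map $\HOM_X(\mc N,\mc N)(V)\to\HOM_X(\mc N,\mc N)(U)$; so surjectivity here already reproves the flasqueness of $\HOM_X(\mc N,\mc N)$ from Corollary \ref{g-pure-flasque}. To get the splitting of $\mc N(V)\to\mc N(U)$ itself, I instead take $U\subseteq V$ and look at the natural transformation more carefully: the natural map $\iota_{U,!}(\mc O_V|_U)\to\iota_{V,!}(\mc O_V|_V)=\iota_{V,!}(\mc O_V)$ of $\mc O_X$-modules is g-pure by Lemma \ref{UV-pure} applied to $\mc M$ an extension of $\mc O_V$; applying $\Hom_{\OXMod}(-,\mc N)$ and the adjunction gives surjectivity of $\Hom_{\OVMod}(\mc O_V,\mc N|_V)\to\Hom_{\OVMod}(\mc O_V|_U\text{-data},\ldots)$, which unwinds to surjectivity of $\mc N(V)=\HOM_X(\mc O_X,\mc N)(V)\to\HOM_X(\mc O_X,\mc N)(U)=\mc N(U)$; a splitting is extracted exactly as in the summand argument above by choosing a preimage of the appropriate identity/unit element and checking the composite is the identity via faithfulness of restriction.

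Actually the cleanest route, and the one I would write up, is to observe that both claims are instances of one principle: \emph{$\HOM_X(\mc A,\mc N)$ is flasque with split restriction maps whenever $\mc N$ is g-pure-injective, and $\HOM_X(\mc O_X,-)\cong\mathrm{id}$.} Corollary \ref{g-pure-flasque} already gives flasqueness; to upgrade surjectivity of $\HOM_X(\mc A,\mc N)(V)\to\HOM_X(\mc A,\mc N)(U)$ to a splitting, note that the g-pure mono $\iota_{U,!}(\mc A|_U)\to\iota_{V,!}(\mc A|_V)$ is in fact \emph{stalkwise split} (Lemma \ref{UV-pure} says "even stalkwise split"), and a stalkwise-split mono with g-pure-injective target admits a retraction by the same $\Hom$-surjectivity argument; this retraction, read through the adjunctions, is exactly a section of the restriction map. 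Taking $\mc A=\mc O_X$ yields the first sentence of the corollary, and the second sentence follows from the summand argument of paragraph one (alternatively: $\iota_{U,*}(\mc N|_U)\cong\HOM_X(\iota_{U,!}(\mc O_X|_U),\mc N)$, and the split mono $\iota_{U,!}(\mc O_X|_U)\hookrightarrow\mc O_X$... here one must be slightly careful since $\iota_{U,!}(\mc O_X|_U)\to\mc O_X$ need not be split, so I would keep the direct argument of paragraph one for the summand claim). The main obstacle is bookkeeping: making sure that the preimage supplied by g-pure-injectivity is compatible with the \emph{specific} natural maps so that the composite is genuinely an identity rather than merely some endomorphism; this is handled in each case by restricting to $U$ and invoking full faithfulness of $\iota_{U,*}$ (equivalently, faithfulness of $(-)|_U$ on sheaves supported on $U$).
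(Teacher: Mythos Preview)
Your first paragraph is correct and is essentially the paper's argument, phrased abstractly via full faithfulness of $\iota_{U,*}$ rather than by writing out sections. You obtain a map $f\colon \iota_{U,*}(\mc N|_U)\to \mc N$ with $\eta\circ f=\mathrm{id}$, where $\eta\colon \mc N\to \iota_{U,*}(\mc N|_U)$ is the unit; this is exactly the split embedding the paper produces.

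The gap is in how you handle the \emph{restriction-map} claim. Your second and third paragraphs are unnecessary and the third contains an actual error: the assertion that ``a stalkwise-split mono with g-pure-injective target admits a retraction by the same $\Hom$-surjectivity argument'' is false. Applying $\Hom_{\OXMod}(-,\mc N)$ to a g-pure mono $\alpha$ yields a surjection, but a surjection of $\Hom$-groups is not a retraction of $\alpha$; you would need $\mc N$ to be the \emph{source} (and g-pure-projective) for that, or the \emph{target} of $\alpha$ to be g-pure-injective, neither of which holds for $\iota_{V,!}(\mc A|_V)$. So that route does not upgrade surjectivity to a splitting.

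What you are missing is that the restriction splitting is already contained in paragraph one. Take sections over an arbitrary open $V$ in the identity $\eta\circ f=\mathrm{id}_{\iota_{U,*}(\mc N|_U)}$: since $\bigl(\iota_{U,*}(\mc N|_U)\bigr)(V)=\mc N(U\cap V)$ and $\eta^V$ is precisely the restriction map $\mc N(V)\to \mc N(U\cap V)$, the map $f^V$ is a section of that restriction. Choosing $U\subseteq V$ gives the splitting of $\mc N(V)\to\mc N(U)$ for any pair $U\subseteq V$. This is exactly how the paper proceeds: it writes the commutative square for $f$ on sections over $V$ and $W=U\cap V$ and reads off simultaneously that $f$ is a split embedding and that $\mathrm{res}^{\mc N}_{WV}$ is a split epimorphism. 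Drop paragraphs two and three and add this one sentence to paragraph one.
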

\begin{proof}
Let $\mc N \in \OXMod$ be g-pure-injective and $U$, $V$ be open subsets of $X$. Using Lemma \ref{shriek-star-pure}, we obtain a surjection
\[ \Hom_{\OXMod}(\iota_{U,*}(\mc N|_U), \mc N) \to \Hom_{\OXMod}(\iota_{U,!}(\mc N|_U), \mc N) \cong \Hom_{\OUMod}(\mc N|_U, \mc N|_U). \]
Hence there is a map $f\colon \iota_{U,*}(\mc N|_U) \to \mc N$ which, after restricting to the subsheaf $\iota_{U,!}(\mc N|_U)$, corresponds to the identity map in the adjunction, thus being the identity when restricted to subsets of $U$. Note that
\[ \iota_{U,*}(\mc N|_U)(V) = \mc N(U \cap V); \]
put $W = U \cap V$. Since the action of $f$ on sections on $V$ and $W$ commutes with restriction maps (i.e.\ $f$ is a sheaf homomorphism), we obtain a commutative square
\[ \begin{tikzcd}[row sep=large,column sep=6em,every label/.append style = {font = \normalsize}]
\mc N(W) = \iota_{U,*}(\mc N|_U)(V) \arrow[r,"f^V"] \arrow[d,"\mathrm{res}_{WV}^{\iota_{U,*}(\mc N|_U)} = \mathrm{id}_{\mc N(W)}"'] & \mc N(V) \arrow[d,"\mathrm{res}_{WV}^{\mc N}"] \\
\mc N(W) = \iota_{U,*}(\mc N|_U)(W) \arrow[r,"f^W = \mathrm{id}_{\mc N(W)}"] & \mc N(W)
\end{tikzcd} \]
We see that $\mathrm{res}_{WV}^{\mc N} \circ f^V$ is an epi-mono factorization of the identity map on $\mc N(W)$, from which both assertions in the statement follow---$\mathrm{res}_{WV}^{\mc N}$ is a split epimorphism and $f$ is a split embedding of $\iota_{U,*}(\mc N|_U)$ into $\mc N$.
\end{proof}

\begin{cor}
Let $\mc N$ be an indecomposable g-pure-injective \OX-module. Then there is $x \in X$ and an indecomposable pure-injective \OXx-module $N$ such that $\mc N \cong \iota_{x,*}(N)$.
\end{cor}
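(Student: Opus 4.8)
The plan is to prove that the required point $x$ is the generic point of the closure of the support of $\mc N$, and that $\mc N$ is then literally the skyscraper sheaf at that point.

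First I would use indecomposability to localise the support. By Corollary~\ref{gpinj-split}, for every open $U\subseteq X$ the sheaf $\iota_{U,*}(\mc N|_U)$ is a direct summand of $\mc N$; since $\mc N$ is indecomposable and nonzero, for each $U$ exactly one of the following holds: $\mc N|_U=0$, or $\mc N\cong\iota_{U,*}(\mc N|_U)$. Put $\mc U=\{U\text{ open}:\mc N|_U\neq 0\}$. This family contains $X$, is closed under enlarging opens, and is closed under finite intersections: if $U,V\in\mc U$ then $\mc N\cong\iota_{U,*}(\mc N|_U)$, and restricting this isomorphism to $V$ identifies $\mc N|_V$ with the direct image of $\mc N|_{U\cap V}$ along $U\cap V\ito V$, so $\mc N|_V\neq 0$ forces $\mc N|_{U\cap V}\neq 0$. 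Hence the union $U_0$ of all opens \emph{not} in $\mc U$ is itself the largest open with $\mc N|_{U_0}=0$, and $Z:=X\setminus U_0$ is a nonempty closed subset; it is moreover irreducible, because an open $V$ meets $Z$ exactly when $V\in\mc U$ and $\mc U$ is closed under finite intersection. Since $X$, being a scheme, is sober, $Z$ has a unique generic point $x$; note that for an open $V$ we then have $x\in V\iff V$ meets $Z\iff\mc N|_V\neq 0$, and in particular $x\notin V\Rightarrow\mc N(V)=0$.

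Next I would show that the unit $\phi\colon\mc N\to\iota_{x,*}(\mc N_x)$ of the stalk–skyscraper adjunction is an isomorphism, checked on sections. Over an open $V$ with $x\notin V$ both sides have zero sections. Over $V$ with $x\in V$ we have $\mc N|_V\neq 0$, hence $\mc N\cong\iota_{V,*}(\mc N|_V)$, i.e.\ $\mc N$ lies in the essential image of the fully faithful functor $\iota_{V,*}$; therefore the unit $\mc N\to\iota_{V,*}(\mc N|_V)$ of the adjunction $(-)|_V\dashv\iota_{V,*}$ is an isomorphism, and on sections over any open $W$ this unit is the restriction map $\mc N(W)\to\mc N(V\cap W)$. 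Taking $x\in V'\subseteq V$ and $W=V$ shows that every transition map in the filtered system $\mc N_x=\varinjlim_{x\in V}\mc N(V)$ is an isomorphism, so each canonical map $\mc N(V)\to\mc N_x$ is an isomorphism; but that map is exactly $\phi_V$. Thus $\phi$ is an isomorphism, and $\mc N\cong\iota_{x,*}(N)$ with $N:=\mc N_x$.

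It remains to see that $N$ is indecomposable and pure-injective over $\OXx$. It is nonzero since $\mc N\neq 0$, and $\operatorname{End}_{\OXx}(N)\cong\operatorname{End}_{\OXMod}(\iota_{x,*}N)=\operatorname{End}_{\OXMod}(\mc N)$ by full faithfulness of $\iota_{x,*}$, so $N$ is indecomposable because $\mc N$ is. For pure-injectivity: any pure monomorphism $A\ito B$ of \OXx-modules is sent by $\iota_{x,*}$ to a g-pure monomorphism $\iota_{x,*}(A)\to\iota_{x,*}(B)$ — its stalk is $A\ito B$ at $x$ and at every other point of $\overline{\{x\}}$, and $0\to 0$ elsewhere — so applying $\Hom_{\OXMod}(-,\mc N)$, which is exact on g-pure monomorphisms since $\mc N$ is g-pure-injective, and using full faithfulness once more, yields surjectivity of $\Hom_{\OXx}(B,N)\to\Hom_{\OXx}(A,N)$. (This is the converse of Lemma~\ref{pinj-sky}.)

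The heart of the matter, and the step I expect to be the main obstacle, is the passage from the local dichotomy ``$\iota_{U,*}(\mc N|_U)$ is $0$ or all of $\mc N$'' to a single point carrying $\mc N$ (the content of the second and third paragraphs above). The two facts that make it work are (i) that $\mc U$ behaves like the complement of a prime in the lattice of opens, so its complement is irreducible and its generic point is the only possible $x$; and (ii) that ``$\mc N\cong\iota_{V,*}(\mc N|_V)$'' can be upgraded to ``the adjunction unit $\mc N\to\iota_{V,*}(\mc N|_V)$ is an isomorphism'', which forces the stalk colimit at $x$ to be an essentially constant system. Everything else (sobriety of schemes, the section formulas for these adjunctions, the stalk computation for skyscrapers) is routine.
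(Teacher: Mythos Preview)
Your proof is correct and follows essentially the same route as the paper's: both use Corollary~\ref{gpinj-split} together with indecomposability to force the support of $\mc N$ to be an irreducible closed set, and then identify $\mc N$ with the skyscraper at its generic point. The only notable difference is in the final step: the paper deduces pure-injectivity of $N$ from Corollary~\ref{open-pinj} (using that g-pure-injective implies c-pure-injective), whereas you argue directly from g-pure-injectivity of $\mc N$ via the stalk--skyscraper adjunction, which is slightly more self-contained.
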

\begin{proof}
Let us first show that restriction maps in $\mc N$ are either isomorphisms or maps to the zero module (but not maps \emph{from} a zero module to a non-zero one). Assume this is not the case, so let $U$ be an open subset of $X$ such that $\ker \mathrm{res}_{XU}^{\mc N}$ is a proper non-zero direct summand of $\mc N(X)$. In that case, by Corollary \ref{gpinj-split}, $\iota_{U,*}(\mc N|_U)$ is a non-trivial proper direct summand of $\mc N$, a contradiction.

Secondly, let $S$ be the support of $\mc N$, i.e.\ the set of those points $x \in X$ such that $\mc N_x$ is non-zero; because of the above-described nature of restriction maps in $\mc N$, $S$ coincides with the set of those $x \in X$ whose \emph{every} open neighborhood has non-zero sections. This is a closed subset of $X$, because if $x \in X \setminus S$, then there is an open set $U \subseteq X$ containing $x$ with $\mc N(U) = 0$, hence $\mc N_y = 0$ for all $y \in U$ and $U \subseteq X \setminus S$.

Next we show that $S$ is irreducible. Let $U, V \subseteq X$ be open sets satisfying $U \cap V \cap S = \emptyset$ ($\Leftrightarrow \mc N(U \cap V) = 0$), we want to show that $U \cap S = \emptyset$ ($\Leftrightarrow \mc N(U) = 0$) or $V \cap S = \emptyset$ ($\Leftrightarrow \mc N(V) = 0$). Since $\mc N$ is a sheaf the assumption implies that the map $\mc N(U \cup V) \to \mc N(U) \times \mc N(V)$ is an isomorphism but then, by the first paragraph, only one of $\mc N(U)$, $\mc N(V)$ can be non-zero, as desired.

We conclude that for any open set $U \subseteq X$, $\mc N(U)$ is non-zero if and only if $U$ intersects the irreducible closed set $S$, i.e.\ contains its generic point $x$. Given the description of restriction maps above, we infer that $\mc N$ is indeed a skyscraper based on $x$. The associated \OXx-module has to be pure-injective by Corollary \ref{open-pinj} and clearly has to be indecomposable, too.
\end{proof}

\begin{lemma}
For any $\mc M \in \OXMod$ the natural map
\[ \mc M \to \prod_{x \in X} \iota_{x,*}(\mc M_x) \]
is g-pure-monomorphism.
\end{lemma}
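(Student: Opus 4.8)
The plan is to check g-purity stalk-wise. Write $\phi\colon\mc M\to\prod_{x\in X}\iota_{x,*}(\mc M_x)$ for the displayed map and $\mc C$ for its cokernel in \OXMod. By \cite[Proposition 3.2]{EEO} it suffices to show that for every $y\in X$ the sequence $0\to\mc M_y\to\bigl(\prod_{x}\iota_{x,*}(\mc M_x)\bigr)_y\to\mc C_y\to 0$ of $\mathcal O_{X,y}$-modules is pure-exact. Since the stalk functor at $y$ is exact, this sequence is automatically short exact with $\mc C_y=\operatorname{coker}(\phi_y)$, so everything reduces to proving that $\phi_y\colon\mc M_y\to\bigl(\prod_{x}\iota_{x,*}(\mc M_x)\bigr)_y$ is a pure monomorphism for each $y$ (which in passing also shows $\phi$ itself is a monomorphism).

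Fix $y$. The $x$-th component of $\phi$ is, by construction, the unit $\eta_x\colon\mc M\to\iota_{x,*}(\mc M_x)$ of the adjunction between taking stalks at $x$ and the skyscraper functor $\iota_{x,*}$; hence composing $\phi$ with the projection onto the factor indexed by $y$ recovers exactly $\eta_y\colon\mc M\to\iota_{y,*}(\mc M_y)$. Now take stalks at $y$: on sections over an open $U\ni y$ the map $\eta_y$ is the canonical localization map $\mc M(U)\to\mc M_y$, while $\iota_{y,*}(\mc M_y)$ is the constant presheaf $\mc M_y$ on such $U$ with identity restrictions, so passing to the colimit over $\{U\ni y\}$ identifies $(\eta_y)_y$ with the identity of $\mc M_y$. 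Therefore the composite
\[ \mc M_y \xrightarrow{\ \phi_y\ } \Bigl(\prod_{x\in X}\iota_{x,*}(\mc M_x)\Bigr)_y \xrightarrow{\ (\mathrm{pr}_y)_y\ } \bigl(\iota_{y,*}(\mc M_y)\bigr)_y = \mc M_y \]
is the identity, so $\phi_y$ is a split monomorphism and in particular a pure monomorphism. As $y$ was arbitrary, $\phi$ is a g-pure monomorphism.

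There is no real obstacle here; the only subtlety worth flagging is that the stalk functor does \emph{not} commute with infinite products, so one cannot naively replace $\bigl(\prod_x\iota_{x,*}(\mc M_x)\bigr)_y$ by $\prod_x\bigl(\iota_{x,*}(\mc M_x)\bigr)_y$. The argument sidesteps this completely by producing an explicit retraction of $\phi_y$ — the stalk at $y$ of the projection onto the single factor of the product indexed by $y$ itself — so that after localizing at $y$ the relevant coordinate of the map is already invertible, whence split, whence pure.
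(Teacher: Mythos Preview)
Your proof is correct and follows essentially the same approach as the paper's: both argue that, at any point $y$, the stalk of the projection onto the $y$-indexed factor provides a retraction of $\phi_y$, so the map is stalkwise split and hence g-pure. Your version is in fact more carefully written than the paper's (which contains some notational slips), and your explicit remark that stalks need not commute with infinite products is a useful clarification.
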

\begin{proof}
Pick $x \in X$. Then $\mc M_x \cong \bigl(\iota_{U_x}(\mc M|_{U_j})\bigr)_x$, hence the projection on the $x$-th coordinate of the product is a splitting and the map is stalkwise split.
\end{proof}

The following has been already observed in \cite{EEO}, where character modules were used to give a proof. We give a different, slightly more constructive proof.

\begin{lemma}\label{embed-into-g-pure-inj}
Every $\mc M \in \OXMod$ embeds g-purely into a g-pure-injective \OX-module. This module can be chosen to be a product of indecomposable g-pure-injectives.
\end{lemma}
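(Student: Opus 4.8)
The plan is to reduce the statement to the analogous fact for modules over a ring and transport it along the skyscraper functors. The module-theoretic input is standard: over any ring, every module admits a pure embedding into a product of indecomposable pure-injective modules (see \cite{P}). Applied stalkwise, this gives, for every $x \in X$, a pure monomorphism $\mu_x\colon \mc M_x \ito N_x$ of \OXx-modules with $N_x = \prod_{j\in J_x} N_{x,j}$, each $N_{x,j}$ an indecomposable pure-injective \OXx-module.

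Next I push everything forward along $\iota_{x,*}$. Being a right adjoint, $\iota_{x,*}$ preserves products, so $\iota_{x,*}(N_x) \cong \prod_{j\in J_x}\iota_{x,*}(N_{x,j})$; by Lemma \ref{pinj-sky} each $\iota_{x,*}(N_{x,j})$ is a g-pure-injective \OX-module, and it is indecomposable because $\iota_{x,*}$ is fully faithful, so $\iota_{x,*}(N_{x,j})$ has the same endomorphism ring as $N_{x,j}$, which is local. Set
\[ P := \prod_{x \in X}\iota_{x,*}(N_x) \cong \prod_{x\in X}\,\prod_{j \in J_x}\iota_{x,*}(N_{x,j}). \]
As a product of g-pure-injectives, $P$ is itself g-pure-injective (apply $\Hom_{\OXMod}(-,P)\cong\prod_{x,j}\Hom_{\OXMod}\bigl(-,\iota_{x,*}(N_{x,j})\bigr)$ to a g-pure-exact sequence and use that products of exact sequences of abelian groups are exact), and by construction it is a product of indecomposable g-pure-injectives, as required by the second assertion.

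For the embedding I take $\psi\colon \mc M \to P$ to be the composite of the natural g-pure monomorphism $\mc M \to \prod_{x \in X}\iota_{x,*}(\mc M_x)$ supplied by the preceding lemma with $\prod_{x}\iota_{x,*}(\mu_x)$, and I check g-purity of $\psi$ stalkwise. The one point that needs care is that stalks do not commute with the infinite product $P$, so I cannot simply argue that $\prod_x\iota_{x,*}(\mu_x)$ is g-pure and invoke composability of g-pure monomorphisms; instead I fix $x \in X$ and compose $\psi$ with the projection $\pi_x\colon P \to \iota_{x,*}(N_x)$. A short computation (the $x$-component of $\mc M \to \prod_{x'}\iota_{x',*}(\mc M_{x'})$ is the unit $\eta_{\mc M}\colon \mc M \to \iota_{x,*}(\mc M_x)$, and $\pi_x$ is compatible with $\prod_{x'}\iota_{x',*}(\mu_{x'})$) gives $\pi_x\circ\psi = \iota_{x,*}(\mu_x)\circ\eta_{\mc M}$. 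Now $\iota_{x,*}(\mc M_x)$ has stalk $\mc M_x$ at $x$, and on those stalks $\eta_{\mc M}$ is the identity (a triangle identity for the adjunction $(-)_x \dashv \iota_{x,*}$, whose counit is an isomorphism since $\iota_{x,*}$ is fully faithful); similarly $\iota_{x,*}(N_x)$ has stalk $N_x$ at $x$ and $\iota_{x,*}(\mu_x)$ is $\mu_x$ on those stalks. Hence the $x$-stalk of $\pi_x\circ\psi$ is $\mu_x$. In particular $\psi_x$ is injective, and since $\mu_x = (\pi_x)_x\circ\psi_x$ is a pure monomorphism, the principle ``$gf$ pure implies $f$ pure'' recalled after Corollary \ref{g-purity-check} shows $\psi_x$ is a pure monomorphism. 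As $x \in X$ was arbitrary, $\psi$ is a g-pure monomorphism.

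I expect the main obstacle to be locating and citing the right module-theoretic statement — a pure embedding of an arbitrary module into a product of indecomposable pure-injectives — together with the minor but genuine bookkeeping ensuring that the stalkwise verification of g-purity of $\psi$ does not tacitly assume that stalks commute with the product defining $P$; routing through the projections $\pi_x$ dispatches the latter cleanly.
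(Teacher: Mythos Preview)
Your proof is correct and follows essentially the same route as the paper: choose stalkwise pure embeddings into (products of indecomposable) pure-injectives, push forward via the skyscraper functors, take the product, and verify g-purity at each stalk by projecting onto the relevant factor and invoking ``$gf$ pure implies $f$ pure''. Your treatment is slightly more explicit about indecomposability of $\iota_{x,*}(N_{x,j})$ and about avoiding the tacit assumption that stalks commute with the infinite product, but the strategy and the key steps coincide with the paper's proof.
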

\begin{proof}
For each $x \in X$, pick a pure embedding $\mc M_x \ito N_x$, where $N_x$ is a pure-injective \OXx-module; $N_x$ can be chosen to be a product of indecomposables by \cite[Corollary 5.3.53]{P}. This gives rise to a map $\mc M \to \iota_{x,*}(N_x)$, the skyscraper being g-pure-injective by Lemma \ref{pinj-sky}. Taking the diagonal of these maps we obtain a map
\[ \mc M \to \prod_{x \in X} \iota_{x,*}(N_x). \]
To show that this is a g-pure monomorphism, pick $y \in X$ and passing to stalks at $y$ we have
\[ \mc M_y \to N_y \oplus \Bigl(\prod_{\substack{x \in X \\ x \neq y}}\iota_{x,*}(N_x)\Bigr)_y \]
which is a pure monomorphism after projecting on the left-hand direct summand, hence a pure monomorphism.
\end{proof}

\begin{lemma}\label{natural-pure}
Let $(U_i)_{i \in I}$ be an open cover of $X$ and $\mc M \in \OXMod$. Then the natural map
\[ \mc M \to \prod_{i \in I} \iota_{U_i,*}(\mc M|_{U_i}) \]
is a g-pure-monomorphism.
\end{lemma}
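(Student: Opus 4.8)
The plan is to verify g-purity stalkwise, which is legitimate by the criterion recalled in Definition~\ref{g-pure-def}: it suffices to show that for every $x \in X$ the map becomes a pure monomorphism of $\OXx$-modules after taking stalks at $x$. In fact we will see that it is stalkwise split, exactly as in Lemmas~\ref{UV-pure} and~\ref{shriek-star-pure}. So fix $x \in X$; since $(U_i)_{i \in I}$ covers $X$, there is an index $j \in I$ with $x \in U_j$.

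The first step is to identify the $j$-th component of the natural map. Writing $u\colon \mc M \to \prod_{i \in I} \iota_{U_i,*}(\mc M|_{U_i})$ for this map, the composite $\mathrm{pr}_j \circ u$ of $u$ with the projection onto the $j$-th factor is exactly the unit $\mc M \to \iota_{U_j,*}(\mc M|_{U_j})$ of the adjunction $(-)|_{U_j} \dashv \iota_{U_j,*}$. Next I would use that $x \in U_j$: both the restriction functor and $\iota_{U_j,*}$ preserve the stalk at $x$ (as noted after the adjoint triple above), and $(-)|_{U_j}\circ\iota_{U_j,*} \cong \mathrm{id}$ since $\iota_{U_j,*}$ is fully faithful. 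Chasing these identifications shows that $\bigl(\iota_{U_j,*}(\mc M|_{U_j})\bigr)_x \cong \mc M_x$ and that, under this isomorphism, the stalk at $x$ of the above unit map is the identity on $\mc M_x$ (equivalently, this is the relevant triangle identity for the adjunction).

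Then I would apply the stalk functor $(-)_x\colon \OXMod \to \OXxMod$ to the relation $\mathrm{pr}_j \circ u = (\text{the unit map})$. Functoriality gives $(\mathrm{pr}_j)_x \circ u_x = \mathrm{id}_{\mc M_x}$, so $(\mathrm{pr}_j)_x$ is a retraction of $u_x\colon \mc M_x \to \bigl(\prod_{i \in I}\iota_{U_i,*}(\mc M|_{U_i})\bigr)_x$. Hence $u_x$ is a split monomorphism of $\OXx$-modules, in particular a pure monomorphism. As $x$ was arbitrary, $u$ is a g-pure monomorphism.

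I do not expect a genuine obstacle here; the one thing to be careful about is that stalks do not commute with infinite products, so one must not try to compute $\bigl(\prod_{i}\iota_{U_i,*}(\mc M|_{U_i})\bigr)_x$ componentwise. Working with the single projection $\mathrm{pr}_j$ and the functoriality of $(-)_x$ sidesteps this entirely.
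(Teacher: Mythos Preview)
Your argument is correct and follows essentially the same route as the paper: pick $j$ with $x\in U_j$, observe that the $j$-th component of the natural map induces the identity on stalks at $x$, and conclude that the stalk of the full map is split mono (hence pure). Your write-up is simply more explicit about the adjunction unit and the caveat on stalks versus infinite products, which the paper's brief proof leaves implicit.
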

\begin{proof}
Pick $x \in X$ and assume that $x \in U_j$, where $j \in I$. Then $\mc M_x \cong \bigl(\iota_{U_j,*}(\mc M|_{U_j})\bigr)_x$, therefore as in the previous lemma the projection on the $j$-th coordinate of the product is a splitting and the map is stalkwise split.
\end{proof}


\section{Example: Spectrum of \texorpdfstring{$\Zp$}{Z\_(p)}}\label{section-Zp}

In this section we investigate the properties of sheaves over the affine scheme $\Spec(\Zp)$, where $p \in \Z$ is any prime number and $\Zp$ denotes the localization of the ring of integers $\Z$ at the prime ideal $(p)$. Below, $X$ denotes $\Spec(\Zp)$.

Since $X$ is affine, the category \QCohX\ is equivalent to the category of modules over the discrete valuation ring $\Zp$. Purity in such a category is well understood, hence we will not focus on it here at all.

As a topological space, $X$ has two points, $(p)$ and $0$. Its non-empty open sets are $Y = \{0\}$ and $X$, with $\OX(Y) = \Q$ and $\OX(X) = \Zp$. It is straightforward that any presheaf of \OX-modules is automatically a sheaf\footnote{Let us point out here that even though ``there is no non-trivial covering of any open set'', the sheaf axiom in general has the extra consequence that sections over the empty set are the final object of the category. Therefore, e.g.\ sheaves of abelian groups over this two-point space form a proper subcategory of presheaves, which need not assign the zero group to the empty set (!). However, since we always assume \OX\ to be a \emph{sheaf} of rings, its ring of sections over the empty set is the zero ring, over which any module is trivial.}; therefore, the objects $\mc M$ of \OXMod\ are described by the following data: a $\Zp$-module $\mc M(X)$, a $\Q$-module (vector space) $\mc M(Y)$, and a $\Zp$-module homomorphism $\mc M(X) \to \mc M(Y)$. This category is also easily seen to be equivalent to the category of right modules over the ring
\[ R = \begin{pmatrix} \Zp & \Q \\ 0 & \Q \end{pmatrix}. \]

This equivalence translates all c-pure notions in \OXMod\ to ordinary purity in $\text{Mod-}R$. As for g-purity, note that in this simple setting, passing to stalks at a point corresponds to passing to the smallest open subset containing the point. Therefore, a short exact sequence of \OX-modules is g-pure-exact if and only if it is pure exact after passing to global sections (g-purity on $Y$ holds always).

The (right) Ziegler spectrum of $R$ was described in \cite[\S 4.1]{R}. Let us give here an overview of the points (where CB denotes Cantor-Bendixson rank in the Ziegler spectrum):
\begin{center}
\begin{tabular}{|c|c|c|c|c|c|}
\hline
$\mc N(X)$ & $\mc N(Y)$ & CB rank & injective & g-pure-inj. & quasicoh. \\\hline
$\Z_{p^\infty}$ & $0$ & 1 & \checkmark & \checkmark & \checkmark \\\hline
$\Q$ & $0$ & 2 & \checkmark & \checkmark &  \\\hline
$\Q$ & $\Q$ & 1 & \checkmark & \checkmark & \checkmark \\\hline
$\Zp/(p^k)$ & $0$ & 0 & & \checkmark & \checkmark \\\hline
$\Zpb$ & $0$ & 1 & & \checkmark & \\\hline
$\Zpb$ & $\Qpb$ & 0 & & & \checkmark \\\hline
$0$ & $\Q$ & 1 & & & \\\hline
\end{tabular}
\end{center}
The map $\mc N(X) \to \mc N(Y)$ is always the obvious one (identity in the first non-trivial case, inclusion in the second). In all the positive cases, g-pure-injectivity follows directly from Lemma \ref{pinj-sky}. On the other hand, the remaining two \OX-modules are not flasque and therefore cannot be g-pure-injective (Corollary \ref{g-pure-flasque}).

Note, however, that the penultimate module \emph{is} g-pure-injective in the category \QCohX, since it corresponds to the pure-injective $\Zp$-module $\Zpb$.

\section{Purity in \texorpdfstring{\QCohX}{QCoh(X)}}
\label{section-QCohX}

While the previous section showed that even for very simple schemes $X$, the two purities differ in the category \OXMod, it is not so difficult to answer when they coincide in \QCohX, at least for concentrated schemes:

\begin{prop}\label{affinity-criterion}
Let $X$ be a concentrated scheme. Then $X$ is affine if and only if c-purity and g-purity in \QCohX\ coincide.
\end{prop}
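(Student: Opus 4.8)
## Proof proposal

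The plan is to prove both implications; the forward one (affine $\Rightarrow$ purities coincide) is essentially the classical fact that g-purity and c-purity both reduce to ordinary module-theoretic purity on an affine scheme, so I will spend most of the effort on the converse. For the converse, assume $X$ is concentrated and that c-purity and g-purity in $\QCohX$ coincide; I want to deduce that $X$ is affine. The natural route is to use a known affinity criterion — for a concentrated (or quasicompact quasiseparated) scheme, $X$ is affine iff the global sections functor $\Gamma(X,-)\colon \QCohX \to \OX(X)\text{-Mod}$ is exact, or equivalently iff $\Hh^1(X,\mc I) = 0$ for every quasicoherent sheaf (it suffices to test on an appropriate class, e.g. quasicoherent ideals, by Serre's criterion). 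So the target is to show that the coincidence of purities forces $\Gamma(X,-)$ to be exact.

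The key mechanism I would exploit is the following. On one hand, g-pure-exactness in $\QCohX$ is detected stalk-wise (equivalently, after restricting to an affine cover), so a short exact sequence of quasicoherent sheaves that is locally split is g-pure. On the other hand, by Lemma~\ref{concentrated-properties}(2), c-pure-exact sequences in $\QCohX$ are exactly the ones that are c-pure-exact in $\OXMod$, hence direct limits of split exact sequences, and in particular, applying the definable functor of global sections (Lemma~\ref{sections-definable}, using that $X$ itself is concentrated), a c-pure-exact sequence in $\QCohX$ stays exact — indeed pure-exact — after taking $\Gamma(X,-)$. Now combine: if every g-pure-exact sequence of quasicoherent sheaves is c-pure-exact (the content of the hypothesis), then every \emph{locally split} short exact sequence of quasicoherent sheaves stays exact after $\Gamma(X,-)$. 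The plan is to show this is enough to force $\Gamma(X,-)$ to be exact on all short exact sequences of quasicoherent sheaves, which by Serre's criterion gives affineness. Concretely: given an arbitrary short exact sequence $0 \to \mc A \to \mc B \to \mc C \to 0$ in $\QCohX$, I would first reduce to showing $\Gamma$ kills nothing by a diagram chase to the case where $\mc C$ is, say, $\OX$ itself or a quasicoherent ideal quotient; then I would like to present the relevant extension as a pushout/pullback of a \emph{locally split} sequence, or more robustly argue that the obstruction to exactness of $\Gamma$ lives in an $\Ext^1$ computed by a \v{C}ech complex over an affine cover, and that very complex expresses the failure of a locally split sequence to be globally split.

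The cleanest implementation I can see: choose a finite affine open cover $X = \bigcup_{i=1}^n V_i$ (possible since $X$ is quasicompact), and for a quasicoherent sheaf $\mc M$ consider the \v{C}ech resolution, whose first map $\mc M \to \prod_i \iota_{V_i,*}(\mc M|_{V_i})$ is a g-pure monomorphism — this is exactly Lemma~\ref{natural-pure}, and each $\iota_{V_i,*}(\mc M|_{V_i})$ has vanishing higher cohomology since $V_i$ is affine and $\iota_{V_i}$ is an affine morphism (here $X$ quasiseparated is used). Let $\mc M'$ be the cokernel; then $0 \to \mc M \to \prod_i \iota_{V_i,*}(\mc M|_{V_i}) \to \mc M' \to 0$ is g-pure, hence by hypothesis c-pure in $\QCohX$, hence stays exact after $\Gamma(X,-)$. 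Feeding this through the long exact cohomology sequence yields a surjection $\Hh^0(X,\prod_i \iota_{V_i,*}(\mc M|_{V_i})) \to \Hh^0(X,\mc M')$ together with the vanishing of the connecting map, which forces $\Hh^1(X,\mc M) \hookrightarrow \Hh^1(X,\prod_i \iota_{V_i,*}(\mc M|_{V_i})) = 0$. Since $\mc M$ was arbitrary quasicoherent, $\Hh^1(X,-)$ vanishes on $\QCohX$, so $X$ is affine by Serre's criterion.

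The main obstacle I anticipate is making precise the claim "$\Hh^1(X,\mc M)$ injects into $\Hh^1$ of the \v{C}ech product" — i.e. that the g-pure (= locally split, Lemma~\ref{natural-pure}) embedding $\mc M \hookrightarrow \prod_i \iota_{V_i,*}(\mc M|_{V_i})$ really induces an injection on $\Hh^1$ once we know it induces a surjection on $\Hh^0$ of the cokernel term; this is just the long exact sequence, so the genuine subtlety is only whether $\Gamma(X,-)$ applied to our specific g-pure sequence is exact, which is precisely where the hypothesis "g-pure $\Rightarrow$ c-pure in $\QCohX$" plus definability of $\Gamma$ (Lemma~\ref{sections-definable}) is invoked. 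A secondary point to check carefully is that the higher cohomology of $\prod_i \iota_{V_i,*}(\mc M|_{V_i})$ vanishes: this needs that cohomology on a concentrated scheme commutes with the relevant products (again a consequence of concentratedness, cf. the flat-base-change/finite-cohomological-dimension arguments already cited in the paper via \cite{TT}), and that $\Hh^{>0}(X, \iota_{V_i,*}(\mc F)) = \Hh^{>0}(V_i, \mc F) = 0$ for $\mc F$ quasicoherent on the affine $V_i$, using that $\iota_{V_i}$ is an affine morphism. Both are standard for concentrated $X$, so the argument should go through.
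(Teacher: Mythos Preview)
Your argument is correct and reaches the same endpoint (Serre's criterion via vanishing of $\Hh^1$), but by a genuinely different and longer route than the paper. The paper's proof of the converse is a two-line observation: since $\OX$ is locally flat, every short exact sequence in $\QCohX$ ending in $\OX$ is g-pure; since $\OX$ is finitely presented in $\QCohX$ (this is where concentratedness enters), every c-pure sequence ending in $\OX$ splits. Hence if g-pure $=$ c-pure, then $\Ext^1_{\QCohX}(\OX,-) = \Hh^1(X,-)$ vanishes identically, and Serre's criterion applies. Your approach instead feeds the \v{C}ech-type embedding of Lemma~\ref{natural-pure} through the hypothesis together with definability of $\Gamma(X,-)$; this is more machinery but makes the role of the affine cover explicit, whereas the paper's argument isolates precisely the single fact needed, namely finite presentability of $\OX$.

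One inaccuracy to flag: you assert that $\iota_{V_i}\colon V_i \hookrightarrow X$ is an affine morphism. That holds when $X$ is separated, but for $X$ merely quasiseparated the intersection of two affine opens need not be affine (e.g.\ the affine plane with doubled origin), so the inclusion of an affine open need not be affine. Fortunately you only need $\Hh^1(X, \iota_{V_i,*}\mc F) = 0$, not the full identification $\Hh^{>0}(X, \iota_{V_i,*}\mc F) \cong \Hh^{>0}(V_i,\mc F)$, and the weaker statement follows from the five-term exact sequence of the Leray spectral sequence for $\iota_{V_i}$: one always has an injection $\Hh^1(X,\iota_{V_i,*}\mc F) \hookrightarrow \Hh^1(V_i,\mc F)$, and the target vanishes since $V_i$ is affine and $\mc F$ is quasicoherent. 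Your worry about cohomology commuting with the product is also unnecessary here, since the cover is finite and cohomology always commutes with finite direct sums.
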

\begin{proof}
The ``only if'' part is clear. On the other hand, observe that the structure sheaf \OX\ is locally flat (i.e.\ flat on every open affine subset of $X$), hence every short exact sequence of quasicoherent sheaves ending in it is g-pure. If, moreover, the sequence is g-pure, then it splits, since \OX\ is finitely presented for $X$ concentrated. This means that if the two purities coincide, the first cohomology functor $\Hh^1(X,-) = \Ext^1_{\QCohX}(\OX,-)$ vanishes on all quasicoherent sheaves. By Serre's criterion \cite[01XF]{Stacks}, this implies that $X$ is affine.
\end{proof}

The following example shows that some sort of finiteness condition on the scheme is indeed necessary to obtain the result.

\begin{exmpl}
Let $k$ be a field and $X = (\Spec k)^{(\mathbb N)}$ the scheme coproduct of countably many copies of $\Spec k$. As a topological space, $X$ is a countable discrete space. Every sheaf of \OX-modules is quasicoherent and the category \QCohX\ is actually equivalent to the category of countable collections of $k$-vector spaces with no relations at all. Such a category is semisimple, hence all short exact sequences are both c-pure- and g-pure-exact.
\end{exmpl}

We proceed with deeper investigation of geometric purity in \QCohX. The following analogue of Lemma \ref{g-purity-check} and the similar criterion for c-purity will be useful; note that there is no restriction imposed on the scheme.

\begin{lemma}\label{g-purity-check-qc}
A short exact sequence $0 \to \mc A \to \mc B \to \mc C \to 0$ in \QCohX\ is g-pure-exact if and only if for every g-pure-injective $\mc N \in \QCohX$, the sequence
\[ 0 \to \Hom_{\QCohX}(\mc C, \mc N) \to \Hom_{\QCohX}(\mc B, \mc N) \to \Hom_{\QCohX}(\mc A, \mc N) \to 0 \]
is exact.
\end{lemma}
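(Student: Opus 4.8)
The plan is to mimic the strategy used for Corollary \ref{g-purity-check} in the category \OXMod, exploiting the fact that g-purity in \QCohX\ is detected stalk-wise. The ``only if'' direction is immediate from the definition of g-pure-injectivity, so the content is the ``if'' direction: assuming the $\Hom_{\QCohX}(-,\mc N)$-sequence is exact for every g-pure-injective quasicoherent $\mc N$, we must show $0 \to \mc A \to \mc B \to \mc C \to 0$ is g-pure, i.e.\ that $\mc A_x \to \mc B_x$ is a pure monomorphism of \OXx-modules for every $x \in X$.

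The key step is to produce, for each $x \in X$, a g-pure-injective quasicoherent sheaf that ``sees'' purity at the stalk $x$. First I would take a pure embedding $\mc A_x \ito N$ in \OXxMod\ with $N$ a pure-injective \OXx-module, which exists by the standard theory (e.g.\ \cite[Corollary 5.3.53]{P}). Then I would form the skyscraper $\iota_{x,*}(N)$; this is g-pure-injective in \OXMod\ by Lemma \ref{pinj-sky}, but it need not be quasicoherent, so I would apply the coherator to get $\mc N := \C(\iota_{x,*}(N))$, which is a g-pure-injective \emph{quasicoherent} sheaf by Lemma \cite[4.7]{EEO} (the first displayed lemma in the excerpt). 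The crucial point to check is that the composite $\mc A \to \C(\iota_{x,*}(N))$ still detects the purity of $\mc A_x \ito N$: using the adjunctions (coherator right adjoint to the inclusion, and stalk-at-$x$ left adjoint to $\iota_{x,*}$), a map $\mc A \to \C(\iota_{x,*}(N))$ corresponds to a map $\mc A \to \iota_{x,*}(N)$ in \OXMod, which corresponds to a map $\mc A_x \to N$ in \OXxMod; under this chain the chosen pure embedding $\mc A_x \ito N$ gives a morphism $h\colon \mc A \to \mc N$ in \QCohX.

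Now I would invoke the hypothesis: since $0 \to \Hom_{\QCohX}(\mc C,\mc N) \to \Hom_{\QCohX}(\mc B,\mc N) \to \Hom_{\QCohX}(\mc A,\mc N) \to 0$ is exact, the map $h\colon \mc A \to \mc N$ extends to some $g\colon \mc B \to \mc N$ with $g|_{\mc A} = h$. Passing to stalks at $x$ (stalks are exact), we get $g_x\colon \mc B_x \to \mc N_x$ with $g_x$ restricted to $\mc A_x$ equal to the original pure embedding $\mc A_x \ito N$ composed with the unit $N \to \mc N_x = (\C\iota_{x,*}N)_x$; but since the stalk functor is left adjoint to $\iota_{x,*}$ and the coherator is a right adjoint whose unit becomes, after adjunction, the identity, this comparison map $N \to (\C\iota_{x,*}N)_x$ is the relevant one and one checks it is (split) monic — more cleanly, I would run the adjunction bookkeeping so that the composite $\mc A_x \to \mc B_x \to \mc N_x$ is literally a pure monomorphism through which $\mc A_x \to \mc B_x$ factors. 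Then by the general principle recalled after Corollary \ref{g-purity-check} (if $gf$ is a pure mono then $f$ is a pure mono), $\mc A_x \to \mc B_x$ is pure, as required.

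The main obstacle I anticipate is precisely the adjunction bookkeeping around the coherator in the third step: one must verify that applying $\C$ to the skyscraper does not destroy the property that the induced map $\mc A \to \mc N$ ``restricts to a pure embedding on the stalk at $x$'', i.e.\ that the canonical map $N \to (\C \iota_{x,*} N)_x$ composed appropriately is injective (indeed split) — this is where the fully-faithfulness of $\iota_{x,*}$ and of the inclusion $\QCohX \ito \OXMod$, together with the triangle identities, have to be combined carefully. An alternative route that avoids the coherator entirely, should that comparison prove awkward, is to observe that g-pure-injective quasicoherent sheaves are in particular flasque (Corollary \ref{g-pure-flasque} carries over) and to build the needed test object directly inside \QCohX; but the coherator approach is the most economical and parallels the existing proofs, so that is the one I would write up.
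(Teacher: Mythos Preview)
Your approach is correct in substance but takes a longer path than the paper's. The paper's proof is a two-line reduction to Corollary~\ref{g-purity-check}: for the ``if'' direction, one must check that $\Hom_{\OXMod}(-,\mc N)$ stays exact on the given sequence for every g-pure-injective $\mc N \in \OXMod$; but since $\mc A, \mc B, \mc C$ are quasicoherent, the coherator adjunction gives $\Hom_{\OXMod}(-,\mc N) \cong \Hom_{\QCohX}(-,\C(\mc N))$ on these objects, and $\C(\mc N)$ is g-pure-injective in \QCohX\ by \cite[Lemma 4.7]{EEO}, so the hypothesis applies directly. No stalks, no skyscrapers, no bookkeeping.

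What you do instead is essentially re-run the proof of Corollary~\ref{g-purity-check} inside \QCohX, inserting the coherator where needed. That works, but your anxiety about the map $N \to (\C\iota_{x,*}N)_x$ is self-inflicted: you should not take the stalk of the extended map $g\colon \mc B \to \C(\iota_{x,*}N)$ at all. Rather, transport $g$ back through the \emph{two} adjunctions (coherator, then stalk--skyscraper) to obtain an $\OXx$-map $g''\colon \mc B_x \to N$; naturality of both adjunctions gives $g'' \circ (\mc A_x \to \mc B_x) = h''$, your chosen pure embedding, and then the ``$gf$ pure $\Rightarrow$ $f$ pure'' principle finishes it. So the stalk $(\C\iota_{x,*}N)_x$ never enters, and the obstacle you anticipate disappears once you run the adjunctions in the right order. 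Still, the paper's route is cleaner because it reuses Corollary~\ref{g-purity-check} wholesale rather than replaying its proof.
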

\begin{proof}
The ``only if'' part is clear. To verify the ``if'' part, recall that by Corollary \ref{g-purity-check}, the sequence is g-pure exact (in \OXMod, which is equivalent), if it stays exact after applying $\Hom_{\OXMod}(-, \mc N)$ for every $\mc N$ g-pure-injective in \OXMod. The result now follows by using the coherator adjunction and the fact that coherator preserves g-pure-injectives.
\end{proof}

\begin{lemma}\label{factor-through-coherator}
Let $\mc M \in \QCohX$ be g-purely embedded into $\mc N \in \OXMod$. Then this embedding factors through a g-pure monomorphism $\mc M \ito \C(\mc N)$.
\end{lemma}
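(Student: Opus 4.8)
The plan is to use the coherator adjunction to produce the factoring map directly, and then to verify it is a g-pure monomorphism, exploiting the elementary fact that if a (pure) monomorphism factors as $g\circ h$ then $h$ is itself a (pure) monomorphism. Write $f\colon\mc M\ito\mc N$ for the given g-pure monomorphism in \OXMod, with $\mc M$ quasicoherent. Since the inclusion $\QCohX\ito\OXMod$ is fully faithful with right adjoint $\C$, its unit is invertible on quasicoherent sheaves, so under the adjunction bijection $\Hom_{\OXMod}(\mc M,\mc N)\cong\Hom_{\QCohX}(\mc M,\C(\mc N))$ the morphism $f$ corresponds to a morphism $\tilde f\colon\mc M\to\C(\mc N)$ in \QCohX, namely the composite of the unit $\eta_{\mc M}\colon\mc M\to\C(\mc M)$ (an isomorphism) with $\C(f)$. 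The triangle identity then gives $f=\varepsilon_{\mc N}\circ\tilde f$, where $\varepsilon_{\mc N}\colon\C(\mc N)\to\mc N$ is the counit. Thus $\tilde f$ is the candidate factorization, and it remains to check that it is a g-pure monomorphism.

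First, $\tilde f$ is a monomorphism: in any category, if $g\circ h$ is a monomorphism then so is $h$, and here $\varepsilon_{\mc N}\circ\tilde f=f$ is a monomorphism. (One does \emph{not} claim that $\varepsilon_{\mc N}$ is a monomorphism; in general it is not, so the point is genuinely left-cancellation of monomorphisms rather than a statement about subobjects of $\mc N$.) Next, g-purity of $\tilde f$ — which, by definition of g-purity in \QCohX, means g-purity of $\tilde f$ regarded as a morphism of \OXMod — is checked on stalks. For each $x\in X$, applying $(-)_x$ to $f=\varepsilon_{\mc N}\circ\tilde f$ gives $f_x=(\varepsilon_{\mc N})_x\circ\tilde f_x$ in \OXxMod, and $f_x$ is a pure monomorphism since $f$ is g-pure. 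By the standard fact for module purity (if $gh$ is a pure monomorphism then $h$ is one: apply $-\otimes D$ and cancel), $\tilde f_x$ is a pure monomorphism of \OXx-modules. As this holds for every $x\in X$, $\tilde f$ is a g-pure monomorphism in \OXMod, hence in \QCohX, and $f=\varepsilon_{\mc N}\circ\tilde f$ is the asserted factorization.

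There is no serious obstacle here. The only points requiring care are the bookkeeping of the coherator adjunction — in particular, using that its unit is invertible on \QCohX, so that the adjoint transpose of $f$ genuinely lies over $f$ via the counit — together with the observation that the argument works \emph{despite} $\varepsilon_{\mc N}$ generally failing to be monic. As an alternative to the stalkwise verification, one may argue via Corollary \ref{g-purity-check}: applying the contravariant functor $\Hom_{\OXMod}(-,\mc P)$ to $f=\varepsilon_{\mc N}\circ\tilde f$ for a g-pure-injective $\mc P\in\OXMod$, surjectivity of $\Hom_{\OXMod}(f,\mc P)$ forces surjectivity of $\Hom_{\OXMod}(\tilde f,\mc P)$, whence $\tilde f$ is g-pure once it is known to be monic.
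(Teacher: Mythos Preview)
Your proof is correct and follows essentially the same route as the paper's: obtain the factoring map from the coherator adjunction, then check g-purity stalkwise using the fact that a left factor of a pure monomorphism is a pure monomorphism (the paper cites this as \cite[Lemma 2.1.12]{P}). Your write-up is more explicit about the adjunction bookkeeping and separately records that $\tilde f$ is monic, and your alternative via Corollary~\ref{g-purity-check} is also valid, but the underlying argument is the same.
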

\begin{proof}
Since $\C$ is a right adjoint, we have a map $f\colon\mc M \to \C(\mc N)$, through which the original embedding factorizes. Passing to stalks at $x \in X$, we have a factorization  of pure embeddings of \OXx-modules and an appeal to \cite[Lemma 2.1.12]{P} shows that $f_x$ is a pure monomorphism, hence $f$ is a g-pure monomorphism.
\end{proof}

\begin{lemma}\label{qc-pi-indecomposable}
Let $X$ be a quasicompact scheme, $\mc N \in \QCohX$ indecomposable g-pure-injective and $U_1, \dots, U_n$  a finite open affine cover of $X$. Then there is $i \in \{1, \dots, n\}$ such that $\mc N$ is a direct summand of $\C(\iota_{U_i,*}(\mc N|_{U_i}))$.
\end{lemma}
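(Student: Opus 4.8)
The plan is to reduce the statement to a Krull--Schmidt argument for a finite biproduct in \QCohX. First I would apply Lemma~\ref{natural-pure} to the given finite affine cover $U_1,\dots,U_n$, obtaining a g-pure monomorphism $\mc N \to \prod_{i=1}^n \iota_{U_i,*}(\mc N|_{U_i})$ in \OXMod. Since $\mc N$ is quasicoherent, Lemma~\ref{factor-through-coherator} applies and this embedding factors through a g-pure monomorphism $\mc N \ito \C\bigl(\prod_{i=1}^n \iota_{U_i,*}(\mc N|_{U_i})\bigr)$ in \QCohX. Being a right adjoint, $\C$ commutes with products, and a finite product in an additive category is a finite biproduct, so the target is $E := \bigoplus_{i=1}^n \C(\iota_{U_i,*}(\mc N|_{U_i}))$; thus we obtain a g-pure monomorphism $j\colon \mc N \ito E$ in \QCohX.

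Next I would use that $\mc N$ is g-pure-injective in \QCohX. Applying $\Hom_{\QCohX}(-,\mc N)$ to the g-pure short exact sequence with left-hand map $j$ shows that precomposition with $j$, as a map $\Hom_{\QCohX}(E,\mc N) \to \Hom_{\QCohX}(\mc N,\mc N)$, is surjective; hence $1_{\mc N}$ lifts to a retraction $q$ of $j$. So $\mc N$ is a direct summand of the finite biproduct $E$.

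Finally, write $\sigma_i$ and $\pi_i$ for the biproduct insertions and projections, so that $\sum_{i=1}^n \sigma_i\pi_i = 1_E$. Then
\[ 1_{\mc N} = q\,j = q\Bigl(\sum_{i=1}^n \sigma_i\pi_i\Bigr)j = \sum_{i=1}^n (q\sigma_i)(\pi_i j), \]
an identity in the ring $\operatorname{End}_{\QCohX}(\mc N)$. Since $\mc N$ is an indecomposable g-pure-injective, this ring is local, so at least one summand $(q\sigma_i)(\pi_i j)$ is a unit $u$; then $\pi_i j\colon \mc N \to \C(\iota_{U_i,*}(\mc N|_{U_i}))$ is a split monomorphism with retraction $u^{-1}(q\sigma_i)$, exhibiting $\mc N$ as a direct summand of $\C(\iota_{U_i,*}(\mc N|_{U_i}))$, as desired.

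The step I expect to be the main obstacle is the input used in the last paragraph, namely that an indecomposable g-pure-injective quasicoherent sheaf has a local endomorphism ring; I would establish this from the general theory of pure-injectivity. Everything else is formal: the two adjunctions involving $\C$ (right adjointness, and commutation with direct limits is not even needed here), the g-purity criteria of Lemmas~\ref{natural-pure} and~\ref{factor-through-coherator}, the splitting of g-pure monomorphisms into a g-pure-injective, and the elementary fact that an indecomposable object with local endomorphism ring which is a direct summand of a finite biproduct is a direct summand of one of the factors.
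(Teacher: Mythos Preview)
Your proposal is correct and follows essentially the same route as the paper: apply Lemma~\ref{natural-pure} and then Lemma~\ref{factor-through-coherator} to obtain a g-pure embedding of $\mc N$ into $\bigoplus_{i=1}^n \C(\iota_{U_i,*}(\mc N|_{U_i}))$, split it using g-pure-injectivity of $\mc N$, and conclude via the local endomorphism ring. Your write-up is simply more explicit than the paper's about why $\C$ distributes over the finite biproduct and about the Krull--Schmidt step, and you correctly flag the local-endomorphism-ring input as the one point needing outside justification (the paper just asserts it).
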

\begin{proof}
By Lemma \ref{natural-pure}, there is a g-pure monomorphism
\[ \mc N \to \bigoplus_{1\leq i\leq n} \iota_{U_i,*}(\mc N|_{U_i}), \]
which, by Lemma \ref{factor-through-coherator}, yields another g-pure monomorphism of quasicoherent sheaves
\[ \mc N \to \bigoplus_{1\leq i\leq n} \C\bigl(\iota_{U_i,*}(\mc N|_{U_i})\bigr). \]
Since $\mc N$ is g-pure-injective, this map splits; moreover, $\mc N$ has local endomorphism ring, therefore $\mc N$ is a direct summand of one of the summands.
\end{proof}

However, we are not able to say much more for the general quasicompact case. Let us therefore restrict our attention further to concentrated schemes. At this point, it is convenient to clarify the role of the direct image functor.

\begin{remark}\label{direct-image-qc-definable}
Recall that for $X$ concentrated, the functor $\iota_{U,*}$ preserves quasicoherence for every $U \subseteq X$ open affine (even open concentrated is enough, \cite[01LC]{Stacks}). Lemma \ref{direct-image-definable} teaches us that under the same assumptions on $X$ and $U$, $\iota_{U,*}$ is a definable functor from \OUMod\ to \OXMod. Since direct limits in the category of quasicoherent sheaves are the same as those in the larger category of all sheaves of modules, there is no need to care about direct limits. However, direct products do not agree, so some caution has to be exercised here.

Fortunately, if we view $\iota_{U,*}$ solely as a functor from \QCohU\ to \QCohX, then this functor \emph{does} commute with direct products, simply for the reason that it is the right adjoint to the restriction functor from \QCohX\ to \QCohU\ (cf.\ the discussion in \cite[B.13]{TT}). Therefore, the restricted functor $\iota_{U,*}^{\qc}\colon \QCohU \to \QCohX$ is definable.

For each $U \subseteq X$ open affine, the fully faithful functor $\iota_{U,*}^{\qc}$ identifies \QCohU\ with a definable subcategory of \QCohX; the closure under c-pure subsheaves follows from Lemma \ref{direct-image-definable} and the fact that by Lemma \ref{concentrated-properties}, c-purity in \QCohX\ is the same as in \OXMod.
\end{remark}

\begin{cor}\label{qcoh-c-indecomposable}
Let $X$ be a concentrated scheme, $\mc N \in \QCohX$ indecomposable g-pure-injective and $U_1, \dots, U_n$  a finite open affine cover of $X$. Then there is $i \in \{1, \dots, n\}$ such that $\mc N \cong \iota_{U_i,*}^{\qc}(\mc N|_{U_i})$.
\end{cor}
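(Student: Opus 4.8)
The plan is to upgrade the conclusion of Lemma \ref{qc-pi-indecomposable} from ``$\mc N$ is a direct summand of $\C(\iota_{U_i,*}(\mc N|_{U_i}))$'' to ``$\mc N \cong \iota_{U_i,*}^{\qc}(\mc N|_{U_i})$'', using the extra structure available for concentrated schemes, namely that $\iota_{U_i,*}^{\qc}$ identifies $\QCohU[U_i]$ with a definable subcategory of $\QCohX$ (Remark \ref{direct-image-qc-definable}). First I would invoke Lemma \ref{qc-pi-indecomposable} to obtain an index $i$ with $\mc N$ a direct summand of $\C(\iota_{U_i,*}(\mc N|_{U_i}))$. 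The key point is to recognise this ambient module as $\iota_{U_i,*}^{\qc}(\mc N|_{U_i})$: since $U_i$ is affine (in particular concentrated) and $X$ is concentrated, $\iota_{U_i,*}$ preserves quasicoherence, so $\iota_{U_i,*}(\mc N|_{U_i})$ is already quasicoherent, hence fixed by the coherator, and it equals $\iota_{U_i,*}^{\qc}(\mc N|_{U_i})$. Thus $\mc N$ is a direct summand of $\iota_{U_i,*}^{\qc}(\mc N|_{U_i})$ inside $\QCohX$.

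Next I would use that the essential image of $\iota_{U_i,*}^{\qc}$ is a definable (in particular, summand-closed) subcategory of $\QCohX$, so $\mc N$ itself lies in that image: there is $\mc L \in \QCohU[U_i]$ with $\mc N \cong \iota_{U_i,*}^{\qc}(\mc L)$. Restricting to $U_i$ and using that $(-)|_{U_i}$ is left inverse to $\iota_{U_i,*}$ (the adjoint triple $\iota_{!}\dashv (-)|_{U_i}\dashv \iota_{*}$ has the pushforward fully faithful), we get $\mc L \cong \mc N|_{U_i}$, whence $\mc N \cong \iota_{U_i,*}^{\qc}(\mc N|_{U_i})$ as desired.

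There is, however, a gap to be bridged: being a \emph{summand} of something in the definable subcategory does not by itself place $\mc N$ in that subcategory unless the subcategory is closed under direct summands — but definable subcategories are closed under pure subobjects, and a direct summand is in particular a pure subobject, so this is fine; alternatively one can split the inclusion $\mc N \ito \iota_{U_i,*}^{\qc}(\mc N|_{U_i})$ directly and argue with the resulting idempotent, which corresponds under full faithfulness of $\iota_{U_i,*}^{\qc}$ to an idempotent on $\mc N|_{U_i}$ in $\QCohU[U_i]$, exhibiting $\mc N$ as the image of $\iota_{U_i,*}^{\qc}$ of a summand of $\mc N|_{U_i}$. The main (mild) obstacle is simply making the identification $\C(\iota_{U_i,*}(\mc N|_{U_i})) = \iota_{U_i,*}^{\qc}(\mc N|_{U_i})$ cleanly and confirming that the splitting descends along the fully faithful functor; both are routine given Remark \ref{direct-image-qc-definable} and Lemma \ref{qc-pi-indecomposable}.
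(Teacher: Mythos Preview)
Your proposal is correct and follows the same route as the paper: identify $\C(\iota_{U_i,*}(\mc N|_{U_i}))$ with $\iota_{U_i,*}^{\qc}(\mc N|_{U_i})$ via preservation of quasicoherence, use that the essential image of $\iota_{U_i,*}^{\qc}$ is a definable subcategory of $\QCohX$ (Remark~\ref{direct-image-qc-definable}) and hence closed under direct summands, and finish by full faithfulness of $\iota_{U_i,*}^{\qc}$. The paper's only extra wrinkle is that it keeps track of the fact that the split monomorphism arising from Lemma~\ref{qc-pi-indecomposable} is the adjunction unit $\mc N \to \iota_{U_i,*}^{\qc}(\mc N|_{U_i})$, so once $\mc N$ is known to lie in the essential image this map---whose restriction to $U_i$ is the identity---is an isomorphism outright, avoiding your detour through an auxiliary $\mc L$.
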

\begin{proof}
Building on Lemma \ref{qc-pi-indecomposable}, we have an $i$ such that the adjunction unit $n\colon \mc N \to \iota_{U,*}(\mc N|_U)$ is a split monomorphism. By the discussion above, the essential image of $\iota_{U_i,*}^{\qc}$ is a definable subcategory, hence it contains $\mc N$. However $n$ restricted to $U$ is the identity map, therefore $n$ is actually an isomorphism.
\end{proof}

\begin{thm}\label{c-ziegler-closed}
Let $X$ be concentrated scheme. Then the indecomposable g-pure-injective quasicoherent sheaves form a closed quasicompact subset of $\Zg(\QCohX)$.
\end{thm}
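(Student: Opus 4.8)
The plan is to realize the set $G$ of indecomposable g-pure-injective quasicoherent sheaves as a finite union of images of closed quasicompact subsets under the definable functors $\iota^{\qc}_{U_i,*}$, and to exploit the fact that a definable functor induces a continuous (indeed closed, quasicompact-preserving) map on Ziegler spectra. First I would fix a finite open affine cover $X = U_1 \cup \dots \cup U_n$, which exists since $X$ is concentrated (in particular quasicompact). For each $i$, the scheme $U_i$ is affine, so $\QCoh(U_i)$ is the module category over the ring $\OX(U_i)$, and its Ziegler spectrum is the usual (quasicompact) Ziegler spectrum of a ring; in particular the whole space $\Zg(\QCoh(U_i))$ is closed and quasicompact in itself.

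Next I would invoke Corollary \ref{qcoh-c-indecomposable}: every $\mc N \in G$ is isomorphic to $\iota^{\qc}_{U_i,*}(\mc N|_{U_i})$ for some $i$, and $\mc N|_{U_i}$, being a direct summand (in fact all of the restriction) of an indecomposable, is an indecomposable g-pure-injective $\OX(U_i)$-module — note that over an affine scheme g-purity and c-purity coincide, so $\mc N|_{U_i}$ is simply an indecomposable pure-injective module, hence a point of $\Zg(\QCoh(U_i))$. Therefore
\[ G = \bigcup_{i=1}^{n} \iota^{\qc}_{U_i,*}\bigl(\Zg(\QCoh(U_i))\bigr) \cap \bigl(\text{indecomposables of } \QCohX\bigr), \]
where I must be slightly careful: $\iota^{\qc}_{U_i,*}$ of an indecomposable pure-injective need not remain indecomposable in general, but Remark \ref{direct-image-qc-definable} tells us $\iota^{\qc}_{U_i,*}$ is definable and identifies $\QCoh(U_i)$ with a \emph{definable subcategory} of $\QCohX$ closed under c-pure subobjects; since it is fully faithful and exact, it sends indecomposable pure-injectives to indecomposable pure-injectives. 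Combined with Lemma \ref{concentrated-properties}, c-pure-injectives landing in this definable subcategory that are g-pure-injective correspond exactly to points; and Corollary \ref{qcoh-c-indecomposable} guarantees the union covers all of $G$.

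The key mechanism is then: a definable functor $F$ between definable categories induces a continuous map on Ziegler spectra whose image of a closed set is closed and whose image of a quasicompact set is quasicompact — this is the standard functoriality of the Ziegler spectrum under definable functors (see \cite[Part III]{P}, in particular the material around \cite[Corollary 18.2.5]{P} and the discussion of functors on Ziegler spectra). Applying this to each $F = \iota^{\qc}_{U_i,*}$ and to the whole space $\Zg(\QCoh(U_i))$ (which is closed and quasicompact in itself), I get that each $\iota^{\qc}_{U_i,*}(\Zg(\QCoh(U_i)))$ is a closed quasicompact subset of $\Zg(\QCohX)$; hence their finite union $G$ is closed and quasicompact. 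The main obstacle I anticipate is bookkeeping the precise sense in which "the Ziegler spectrum of a definable subcategory sits as a closed subset of the ambient Ziegler spectrum" and checking that $\iota^{\qc}_{U_i,*}$, being fully faithful with exact left adjoint (restriction), really does induce a \emph{homeomorphism} onto its (closed) image rather than merely a continuous map — so that one does not accidentally pick up extra, non-quasicoherent-over-$X$ or decomposable points. Once that identification is in place, closedness and quasicompactness are formal from the affine case and finiteness of the cover.
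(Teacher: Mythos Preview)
Your proposal is correct and follows essentially the same route as the paper: fix a finite affine cover, use Corollary \ref{qcoh-c-indecomposable} to see that every indecomposable g-pure-injective lies in the essential image of some $\iota^{\qc}_{U_i,*}$, and then use Remark \ref{direct-image-qc-definable} to identify these images with definable subcategories of $\QCohX$, whose Ziegler closed sets are (homeomorphic to) the quasicompact spectra $\Zg(\OX(U_i))$. The paper's proof is terser but the argument is the same; your extra bookkeeping about $\iota^{\qc}_{U_i,*}$ preserving indecomposability (via full faithfulness) and about the reverse inclusion (that every indecomposable pure-injective pushed forward from an affine piece is indeed g-pure-injective, which follows from the restriction--direct image adjunction and the fact that restriction preserves g-purity) is correct and merely fills in details the paper leaves implicit.
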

\begin{proof}
Let $U_1, \dots, U_n$ be a finite open affine cover of $X$. By Corollary \ref{qcoh-c-indecomposable}, every indecomposable g-pure-injective quasicoherent sheaf is of the form $\iota_{U_i,*}^{\qc}(\mc M)$ for some $i \in  \{1, \dots, n\}$  and $\mc M \in \QCoh(U_i)$. We infer that the geometric part of $\Zg(\QCohX)$ is the union of finitely many closed sets corresponding to the ($\iota_{U_i,*}^{\qc}$-images of) definable categories $\QCoh(U_i)$, hence a closed set. Furthermore, since the $U_i$ are affine, we have equivalences $\QCoh(U_i) \cong \OX(U_i)\text{\rm-Mod}$ leading to homeomorphisms $\Zg(\QCoh(U_i)) \cong \Zg(\OX(U_i))$. Therefore, by \cite[Corollary 5.1.23]{P}, the sets in the union are all quasicompact and their (finite) union as well.
\end{proof}

\begin{defn}
Let $X$ be a concentrated scheme. We denote by $\mbc D_X$ the definable subcategory of \QCohX\ corresponding to the closed subset of $\Zg(\QCohX)$ from Theorem \ref{c-ziegler-closed}.
\end{defn}

\begin{remark}\label{direct-image-in-D-X}
Note that by the proof of Theorem \ref{c-ziegler-closed}, for every open affine $U \subseteq X$ and every $\mc M \in \QCohU$, the direct image $\iota^{\qc}_{U,*}(\mc M)$ belongs to $\mbc D_X$.
\end{remark}

\begin{prop}\label{D-X-properties}
Let $X$ be a concentrated scheme.
\begin{enumerate}
\item A c-pure-injective quasicoherent sheaf is g-pure-injective if and only if it belongs to $\mbc D_X$.
\item Any g-pure monomorphism (g-pure-exact sequence) starting in an object of $\mbc D_X$ is c-pure.
\end{enumerate}
\end{prop}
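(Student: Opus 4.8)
The plan is to prove one structural fact about $\mbc D_X$ and then read off both parts from it. Fix a finite open affine cover $U_1,\dots,U_n$ of $X$. Since $X$ is concentrated each $\iota_{U_i,*}(\mc M|_{U_i})$ is quasicoherent, so $G\colon\mc M\mapsto\bigoplus_{i=1}^n\iota_{U_i,*}^{\qc}(\mc M|_{U_i})$ is an additive endofunctor of $\QCohX$; it commutes with direct limits and preserves c-pure monomorphisms, because each $\iota_{U_i,*}^{\qc}$ is definable (Remark \ref{direct-image-qc-definable}), each restriction $(-)|_{U_i}$ preserves c-pure monos (Lemma \ref{concentrated-properties}(2) applied over $X$ and over $U_i$, together with Lemma \ref{restriction-definable}), and finite coproducts preserve both properties. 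The adjunction units give a natural transformation $\eta$ from $\mathrm{id}_{\QCohX}$ to $G$, and by Lemma \ref{natural-pure} every component $\eta_{\mc M}$ is a g-pure monomorphism. The heart of the argument is the claim that if $\mc M\in\mbc D_X$ then $\eta_{\mc M}$ is even a \emph{c}-pure monomorphism.

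To prove that claim I would set $\mbc Z=\{\mc M\in\QCohX:\eta_{\mc M}\text{ is c-pure}\}$ and show $\mbc D_X\subseteq\mbc Z$. Writing $\mbc E_i$ for the (definable) essential image of $\iota_{U_i,*}^{\qc}$, each $\mbc E_i$ lies in $\mbc Z$: on an object $\iota_{U_i,*}^{\qc}(\mc M')$ the $i$-th component of $\eta$ is an isomorphism, so $\eta$ is a split monomorphism there. From naturality of $\eta$ and the three properties of $G$ above it follows routinely that $\mbc Z$ is closed under finite direct sums, under direct limits (a direct limit of c-pure monos being c-pure), and under c-pure subobjects (for the last, $G(j)\circ\eta_{\mc M'}=\eta_{\mc M}\circ j$ is a composite of c-pure monos when $j$ is c-pure and $\mc M\in\mbc Z$, and one applies ``$gf$ c-pure $\Rightarrow f$ c-pure''). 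Now $\mbc D_X$ is, by its definition, the definable subcategory of $\QCohX$ whose Ziegler-closed set is the union of those of the $\mbc E_i$, hence the definable subcategory generated by $\bigcup_i\mbc E_i$; by the explicit description of a generated definable subcategory \cite[3.4.7]{P}, every object of $\mbc D_X$ is a c-pure subobject of a direct limit of products of members of $\bigcup_i\mbc E_i$. Grouping the factors of such a product according to the index $i$ and using that each $\mbc E_i$, being definable, is closed under products taken in $\QCohX$, the product is in fact a \emph{finite} direct sum of members of $\bigcup_i\mbc E_i$, so it lies in $\mbc Z$; closure of $\mbc Z$ under direct limits and c-pure subobjects then yields $\mbc D_X\subseteq\mbc Z$.

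For part (1): if $\mc N$ is g-pure-injective in $\QCohX$ then the g-pure mono $\eta_{\mc N}$ splits, so $\mc N$ is a direct summand of $G(\mc N)$, which lies in $\mbc D_X$ (Remark \ref{direct-image-in-D-X}), whence $\mc N\in\mbc D_X$. Conversely, suppose $\mc N$ is c-pure-injective and $\mc N\in\mbc D_X$; by the claim $\eta_{\mc N}$ is c-pure, hence splits, so $\mc N$ is a direct summand of $G(\mc N)=\bigoplus_i\iota_{U_i,*}^{\qc}(\mc N|_{U_i})$. Since $\iota_{U_i,*}^{\qc}$ is definable it sends a pure mono $\mc A'\ito\mc B'$ of $\QCoh(U_i)$ to a c-pure mono of $\QCohX$, which the c-pure-injective $\mc N$ absorbs; transposing back along $(-)|_{U_i}\dashv\iota_{U_i,*}^{\qc}$ shows $\mc N|_{U_i}$ absorbs $\mc A'\ito\mc B'$, so $\mc N|_{U_i}$ is a pure-injective $\OX(U_i)$-module. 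And for any pure-injective $\OX(U_i)$-module $P$, transposing a g-pure mono $\mc A\ito\mc B$ of $\QCohX$ along the same adjunction (exactly as in Lemma \ref{pinj-sky}) reduces extending a map into $\iota_{U_i,*}^{\qc}(P)$ to extending a map into $P$ over the pure mono $\mc A|_{U_i}\ito\mc B|_{U_i}$, so $\iota_{U_i,*}^{\qc}(P)$ is g-pure-injective in $\QCohX$. A finite direct sum of g-pure-injectives is g-pure-injective, and g-pure-injectivity passes to direct summands, so $\mc N$ is g-pure-injective.

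For part (2): let $f\colon\mc A\ito\mc B$ be a g-pure monomorphism with $\mc A\in\mbc D_X$. By naturality, $\eta_{\mc B}\circ f=G(f)\circ\eta_{\mc A}$; here $\eta_{\mc A}$ is c-pure by the claim, and $G(f)=\bigoplus_i\iota_{U_i,*}^{\qc}(f|_{U_i})$ is c-pure because each $f|_{U_i}$ is a pure monomorphism in $\QCoh(U_i)$ (g-purity of quasicoherent sheaves being testable on an affine cover) and $\iota_{U_i,*}^{\qc}$ is definable. Hence $\eta_{\mc B}\circ f$ is a composite of c-pure monos, so it is c-pure, and ``$gf$ c-pure $\Rightarrow f$ c-pure'' gives that $f$ is c-pure; the corresponding short exact sequence is then c-pure-exact. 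I expect the main obstacle to be precisely the structural claim about $\eta_{\mc M}$, and inside it the one non-formal point: identifying $\mbc D_X$ with the definable subcategory generated by the $\mbc E_i$ and invoking the precise form of the generation theorem (close under products, then direct limits, then pure subobjects) so that the generating products collapse to finite sums living in $\mbc Z$. Everything else is adjunction-chasing plus the elementary behaviour of c-pure monomorphisms under composition, finite direct sums and direct limits.
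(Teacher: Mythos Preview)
Your overall strategy---proving first that $\eta_{\mc M}$ is c-pure for every $\mc M\in\mbc D_X$ and then reading off both parts---is sound, and your arguments for the structural claim and for part~(2) are correct. This is a genuinely different route from the paper's: the paper proves part~(1) by the one-line observation that a c-pure-injective in a definable subcategory is a summand of a product of its indecomposable pure-injectives, which for $\mbc D_X$ are by \emph{definition} the indecomposable g-pure-injectives; and it proves part~(2) by first reducing to the case where the target is g-pure-injective (embed g-purely into one), noting that then both ends lie in $\mbc D_X$, and testing c-purity against the pure-injectives of $\mbc D_X$. Your naturality-square argument for~(2) is arguably cleaner, and your structural claim is essentially the content of the Proposition immediately following this one in the paper, so you are proving the two results in the opposite order.

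There is, however, a real gap in your ``if'' direction of part~(1). The step ``transposing back along $(-)|_{U_i}\dashv\iota_{U_i,*}^{\qc}$ shows $\mc N|_{U_i}$ absorbs $\mc A'\ito\mc B'$'' does not work: that adjunction gives
\[
\Hom_{\QCoh(U_i)}(\mc M|_{U_i},\mc N')\;\cong\;\Hom_{\QCohX}(\mc M,\iota_{U_i,*}^{\qc}(\mc N')),
\]
so starting from $f\colon\mc A'\to\mc N|_{U_i}$ you obtain a map $\iota_{U_i,*}^{\qc}(\mc A')\to\iota_{U_i,*}^{\qc}(\mc N|_{U_i})$, \emph{not} a map into $\mc N$. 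What you would need is the opposite adjunction $\iota_{U_i,*}^{\qc}\dashv(-)|_{U_i}$, which is not available. In fact the argument as written would prove that $\mc N|_{U_i}$ is pure-injective for \emph{every} c-pure-injective $\mc N\in\QCohX$ (you use only c-pure-injectivity of $\mc N$, not membership in $\mbc D_X$), and that is false: the structure sheaf of $\Pk$ is c-pure-injective in $\QCohPk$ while $\mc O|_U\cong k[x]$ is not pure-injective. Equivalently, restriction $(-)|_{U_i}\colon\QCohX\to\QCoh(U_i)$ is \emph{not} definable---it fails to preserve products, as the computation of $\mc O^I$ in the proof of Proposition~\ref{structure-sheaf-pi} shows.

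The easiest repair is simply to replace this step by the paper's argument: once you know $\mc N\in\mbc D_X$ is c-pure-injective, invoke \cite[Corollary~5.3.52]{P} to write $\mc N$ as a summand of a product of indecomposable c-pure-injectives in $\mbc D_X$; these are, by the very definition of $\mbc D_X$, indecomposable g-pure-injectives, and g-pure-injectivity passes to products and summands. Your structural claim is then not needed for~(1), though it remains the engine of your proof of~(2).
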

\begin{proof}
It is a standard fact (see \cite[Corollary 5.3.52]{P}) about definable subcategories that their objects are precisely pure subobjects of products of indecomposable pure-injectives. Therefore, if $\mc N \in \mbc D_X$ is c-pure-injective, then it is a direct summand of a product of indecomposable c-pure-injective objects in $\mbc D_X$, all of which are g-pure-injective, a property passing both to products and direct summands, hence $\mc N$ is g-pure-injective.

On the other hand, if $\mc N$ is g-pure-injective and $U_1, \dots, U_n$ a finite open affine cover of $X$, then the g-pure monomorphism
\[ \mc N \to \bigoplus_{1\leq i\leq n} \iota_{U_i,*}^{\qc}(\mc N|_{U_i}) \]
splits. Since $\iota_{U_i,*}^{\qc}(\mc N|_{U_i}) \in \mbc D_X$ for each $1 \leq i \leq n$, we infer that $\mc N \in \mbc D_X$.

For the second claim, let $f \colon \mc M \ito \mc A$ be a g-pure monomorphism with $\mc M \in \mbc D_X$. Assume first $\mc A$ is g-pure-injective; then $\mc A \in \mbc D_X$ by the first part. Recall that in any definable category, purity of a monomorphism can be tested by applying the contravariant Hom functor with every pure-injective object. Since the pure-injectives of $\mbc D_X$ are precisely g-pure-injectives, $f$ ``passes'' this test and is therefore c-pure. If $\mc A \in \QCohX$ is arbitrary, pick a g-pure embedding $g \colon \mc A \ito \mc N$ with $\mc N$ g-pure-injective; such an embedding exists by \cite[Corollary 4.8]{EEO} (or combining Lemmas \ref{embed-into-g-pure-inj} and \ref{factor-through-coherator}). Then $gf$ is a g-pure monomorphism and by the argument above, it is c-pure. We infer that $f$ is c-pure as well.
\end{proof}


\begin{cor}
A concentrated scheme $X$ is affine if and only if the subcategory $\mbc D_X$ is the whole category \QCohX.
\end{cor}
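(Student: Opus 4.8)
The plan is to reduce the statement to Proposition \ref{affinity-criterion}, which already characterises affineness of a concentrated scheme by the coincidence of c-purity and g-purity in \QCohX. So it suffices to show that $\mbc D_X = \QCohX$ holds if and only if the two purities coincide, and the corollary then follows.

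For the direction $\mbc D_X = \QCohX \Rightarrow X$ affine: every g-pure monomorphism, and more generally every g-pure-exact sequence, of quasicoherent sheaves starts in an object of $\QCohX = \mbc D_X$, so by Proposition \ref{D-X-properties}(2) it is c-pure (c-pure-exact). Conversely, c-pure-exact sequences in \QCohX\ are always g-pure-exact when $X$ is concentrated, by the remark following Lemma \ref{concentrated-properties}. Hence the two purities coincide, and Proposition \ref{affinity-criterion} then gives that $X$ is affine.

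For the direction $X$ affine $\Rightarrow \mbc D_X = \QCohX$: by Proposition \ref{affinity-criterion} the two purities coincide on $X$, hence c-pure-injectivity and g-pure-injectivity coincide in \QCohX; in particular the indecomposable g-pure-injective quasicoherent sheaves are precisely all the points of $\Zg(\QCohX)$. By definition $\mbc D_X$ is the definable subcategory cut out by the Ziegler-closed set of Theorem \ref{c-ziegler-closed}, which is now the entire spectrum; since every object of \QCohX\ is a c-pure subobject of a product of indecomposable c-pure-injectives, it lies in $\mbc D_X$, and therefore $\mbc D_X = \QCohX$.

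Essentially no obstacle is expected here: the corollary is immediate from Propositions \ref{affinity-criterion} and \ref{D-X-properties}. The one point requiring a little care is to extract the \emph{full} equivalence of the two purities --- not merely the containment furnished by Proposition \ref{D-X-properties}(2) --- before appealing to Proposition \ref{affinity-criterion}; this is why the first direction also invokes the general fact that c-purity implies g-purity on a concentrated scheme.
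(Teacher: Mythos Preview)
Your proof is correct and follows essentially the same approach as the paper, reducing both directions to Proposition \ref{affinity-criterion} via Proposition \ref{D-X-properties}. The only minor difference is that for the implication $\mbc D_X = \QCohX \Rightarrow X$ affine you invoke part (2) of Proposition \ref{D-X-properties} directly, whereas the paper uses part (1) to conclude that every c-pure-injective is g-pure-injective and then tests c-purity against these; your route is slightly more streamlined but amounts to the same argument.
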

\begin{proof}
If $X$ is affine, then the two purities coincide, therefore c-pure-injectives are g-pure-injectives. On the other hand, if every c-pure-injective is g-pure-injective then, since c-pure-injectives determine which short exact sequences are c-pure-exact, we see that all g-pure-exact sequences are c-pure-exact hence, by Proposition \ref{affinity-criterion}, $X$ is affine.
\end{proof}

\begin{cor}
Let $X$ be a concentrated scheme. Then every indecomposable g-pure-injective $\mc N \in \QCohX$ is the coherator of some indecomposable g-pure-injective $\mc M \in \OXMod$.
\end{cor}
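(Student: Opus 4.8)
The plan is to realize the indecomposable g-pure-injective quasicoherent sheaf $\mc N$ as the coherator of a carefully chosen indecomposable g-pure-injective \OX-module, using the structural results from Section \ref{section-OXMod} together with Corollary \ref{qcoh-c-indecomposable}. First I would invoke Corollary \ref{qcoh-c-indecomposable} to fix a finite open affine cover $U_1, \dots, U_n$ of the concentrated scheme $X$ and an index $i$ with $\mc N \cong \iota_{U_i,*}^{\qc}(\mc N|_{U_i})$. Write $V = U_i$; since $V$ is affine, $\QCoh(V)$ is the module category $\OX(V)\text{-Mod}$, and $\mc N|_V$ corresponds to an indecomposable pure-injective $\OX(V)$-module, which is g-pure-injective in $\OX(V)\text{-Mod} = \QCoh(V)$. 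The natural candidate for the \OX-module is then $\mc M := \iota_{V,*}(\mc N|_V)$, the (non-restricted) direct image, which is g-pure-injective in \OXMod\ by the lemma on preservation of g-pure-injectivity under $\iota_{U,*}$; one must then check it is indecomposable and has the right coherator.

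For the coherator: because $V$ is concentrated (being affine) and $\iota_V$ is a concentrated morphism, $\iota_{V,*}$ carries quasicoherent sheaves on $V$ to quasicoherent sheaves on $X$ (Remark \ref{direct-image-qc-definable}), so $\iota_{V,*}(\mc N|_V)$ is already quasicoherent; hence applying the coherator $\C$ does nothing, $\C(\mc M) = \mc M = \iota_{V,*}(\mc N|_V)$. But by the compatibility of $\iota_{V,*}^{\qc}$ with the forgetful functor, $\iota_{V,*}(\mc N|_V)$ as a quasicoherent sheaf is exactly $\iota_{V,*}^{\qc}(\mc N|_V) \cong \mc N$. Thus $\C(\mc M) \cong \mc N$ already, and it only remains to see that $\mc M$ can be chosen indecomposable g-pure-injective in \OXMod.

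The remaining point — which I expect to be the main obstacle — is indecomposability of $\mc M = \iota_{V,*}(\mc N|_V)$ as an \OX-module. Its restriction to $V$ is $\mc N|_V$, which is indecomposable, so any nontrivial direct sum decomposition $\mc M = \mc M' \oplus \mc M''$ must have one summand, say $\mc M''$, with $\mc M''|_V = 0$; this forces $\mc M''$ to be supported on $X \setminus V$. The task is to rule this out using g-pure-injectivity: by Corollary \ref{gpinj-split}, for $\mc M$ g-pure-injective every restriction map is a split epimorphism and $\iota_{W,*}(\mc M|_W)$ splits off for every open $W$; combined with the fact that $\mc M$, being in the essential image of $\iota_{V,*}$, is determined by its sections on opens meeting $V$, one shows no nonzero summand can be supported off $V$. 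Alternatively — and this is probably the cleanest route, sidestepping the obstacle — one does not insist on $\mc M$ being indecomposable a priori but instead embeds $\mc N$ g-purely into a product $\prod \iota_{x,*}(N_x)$ of indecomposable g-pure-injective \OX-modules (Lemma \ref{embed-into-g-pure-inj}), pushes this through the coherator via Lemma \ref{factor-through-coherator} to a g-pure embedding $\mc N \ito \C(\prod \iota_{x,*}(N_x)) = \prod \C(\iota_{x,*}(N_x))$ (the coherator commutes with products, being a right adjoint), which splits because $\mc N$ is g-pure-injective; since $\mc N$ has local endomorphism ring it is a direct summand of a single $\C(\iota_{x,*}(N_x))$, and as this is the coherator of an indecomposable g-pure-injective \OX-module, a standard Krull–Schmidt/local-endomorphism-ring argument (the coherator of an indecomposable pure-injective need not stay indecomposable, but a summand of it that is itself indecomposable pure-injective is the coherator of a summand of $\iota_{x,*}(N_x)$, forcing $\mc N \cong \C(\iota_{x,*}(N_x))$ after replacing $N_x$ by the appropriate summand) completes the proof.
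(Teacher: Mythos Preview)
Your first approach has a genuine gap. You claim that $\mc M := \iota_{V,*}(\mc N|_V)$ is g-pure-injective in \OXMod\ by the lemma that $\iota_{V,*}$ preserves g-pure-injectivity. But that lemma requires the input to be g-pure-injective in $\mc O_V\text{-Mod}$, not merely in $\QCoh(V)$. You correctly note that $\mc N|_V$ is an indecomposable pure-injective $\OX(V)$-module, hence g-pure-injective in $\QCoh(V)$; this does \emph{not} make it g-pure-injective in the larger category $\mc O_V\text{-Mod}$. Indeed, since $\iota_{V,*}(\mc N|_V)$ is already quasicoherent, your $\mc M$ is just $\mc N$ itself, and the question reduces to whether $\mc N$ is g-pure-injective in \OXMod\,---\,which Section~\ref{section-Zp} shows can fail even on an affine scheme: the quasicoherent sheaf with global sections $\Zpb$ and sections $\Qpb$ on the open point is indecomposable g-pure-injective in \QCohX\ but not in \OXMod\ (it is not flasque, contradicting Corollary~\ref{g-pure-flasque}). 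So in that example your candidate $\mc M$ is wrong, and no amount of arguing about indecomposability will repair it.

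The paper supplies an extra ingredient you do not invoke: an indecomposable pure-injective module over a commutative ring is in fact a module over the localization at some maximal ideal (\cite[Theorem~2.$\Z$8]{P2}). Thus the $\OX(V)$-module $N$ corresponding to $\mc N|_V$ lives over $\OXx$ for a single point $x\in V$, and one takes $\mc M = \iota_{x,*}(N)$, a skyscraper, which \emph{is} g-pure-injective in \OXMod\ by Lemma~\ref{pinj-sky} and is manifestly indecomposable. The identification $\C_X(\mc M)\cong\mc N$ then follows from commutation of the coherator with direct images of concentrated maps together with $\iota_{V,*}(\mc M|_V)=\mc M$ for a skyscraper based in $V$. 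Your alternative route via Lemma~\ref{embed-into-g-pure-inj} also stalls: from $\mc N$ being a summand of an \emph{infinite} product $\prod_x \C(\iota_{x,*}(N_x))$ one cannot, by locality of the endomorphism ring alone, conclude that $\mc N$ is a summand of a single factor; and even if it were, you give no argument that $\mc N$ equals (rather than is a proper summand of) that coherator.
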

\begin{proof}
By Corollary \ref{qcoh-c-indecomposable}, there is an open affine $U \subseteq X$ such that $\mc N$ is the direct image of an indecomposable g-pure-injective object of \QCohU, which further corresponds to an indecomposable pure-injective $\OX(U)$-module $N$. By \cite[Theorem 2.$\Z$8]{P2}, $N$ is in fact a module over the localization in some maximal ideal of $\OX(U)$; let this maximal ideal correspond to a point $x \in U$. Then clearly $\mc N|_U = \C_U(\iota_{x,*}(N)|_U)$ and by Lemmas \ref{pinj-sky} and \ref{restriction-definable}, $\mc M = \iota_{x,*}(N)$ and $\mc M|_U$ are g-pure-injective (and clearly indecomposable).

By \cite[B.13]{TT}, coherator commutes with direct images of concentrated maps, hence we have
\[ \mc N = \iota_{U,*}^{\qc}\bigl(\C_U(\mc M|_U)\bigr) = \C_X\bigl(\iota_{U,*}(\mc M|_U)\bigr) = \C_X(\mc M), \]
where the last equality follows from the fact that $\mc M$ is a skyscraper.
\end{proof}

Note, however, that the preimage $\mc M$ from the preceding Corollary is far from being unique; Section \ref{section-Zp} gives a couple of examples of indecomposable c-pure-injective \OX-modules with the same module of global sections (and therefore the same coherator).

\begin{prop}
Let $X$ be a concentrated scheme. The following are equivalent for $\mc M \in \QCohX$:
\begin{enumerate}
\item $\mc M \in \mbc D_X$.
\item $\mc M$ is a c-pure subsheaf of a g-pure-injective quasicoherent sheaf.
\item For every finite open affine cover $U_1, \dots, U_n$ of $X$, the monomorphism
\[ \mc M \to \bigoplus_{1\leq i\leq n} \iota_{U_i,*}^{\qc}(\mc M|_{U_i}) \]
is c-pure.
\item There exists a finite open affine cover $U_1, \dots, U_n$ of $X$ such that the monomorphism
\[ \mc M \to \bigoplus_{1\leq i\leq n} \iota_{U_i,*}^{\qc}(\mc M|_{U_i}) \]
is c-pure.
\end{enumerate}
\end{prop}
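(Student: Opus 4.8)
The plan is to prove the cycle of implications $(1)\Rightarrow(3)\Rightarrow(4)\Rightarrow(1)$, with $(2)$ slotted in as an easy consequence along the way. The key inputs are Lemma~\ref{natural-pure}, which guarantees that the displayed maps are always g-pure monomorphisms (they are stalkwise split), together with the characterization of $\mbc D_X$ provided by Proposition~\ref{D-X-properties} and Remark~\ref{direct-image-in-D-X}. Recall from Remark~\ref{direct-image-in-D-X} that each $\iota_{U_i,*}^{\qc}(\mc M|_{U_i})$ lies in $\mbc D_X$, and $\mbc D_X$ is a definable subcategory, hence closed under finite direct sums, so the codomain $\bigoplus_{1\le i\le n}\iota_{U_i,*}^{\qc}(\mc M|_{U_i})$ always belongs to $\mbc D_X$.

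First I would prove $(1)\Rightarrow(3)$: if $\mc M\in\mbc D_X$, then the map $\mc M\to\bigoplus_i\iota_{U_i,*}^{\qc}(\mc M|_{U_i})$ is a g-pure monomorphism by Lemma~\ref{natural-pure} whose source lies in $\mbc D_X$, so by Proposition~\ref{D-X-properties}(2) it is c-pure. The implication $(3)\Rightarrow(4)$ is trivial (a concentrated scheme has a finite open affine cover by quasicompactness). For $(4)\Rightarrow(1)$, suppose we have such a cover with the monomorphism $\mc M\to\bigoplus_i\iota_{U_i,*}^{\qc}(\mc M|_{U_i})$ c-pure. The codomain lies in $\mbc D_X$, and definable subcategories are closed under c-pure subobjects (indeed this is exactly the kind of closure property built into the definition; cf.\ \cite[Corollary 5.3.52]{P}), so $\mc M\in\mbc D_X$. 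Finally, for $(2)$: $(3)\Rightarrow(2)$ since the codomain in $(3)$ is a sum of objects of $\mbc D_X$, and by Proposition~\ref{D-X-properties}(1) a c-pure-injective object of $\mbc D_X$ is g-pure-injective---so embed the codomain c-purely into a c-pure-injective object of $\mbc D_X$ (using that $\mbc D_X$, being definable, has enough pure-injectives inside it) to obtain a c-pure embedding of $\mc M$ into a g-pure-injective quasicoherent sheaf. Conversely $(2)\Rightarrow(1)$: a g-pure-injective quasicoherent sheaf lies in $\mbc D_X$ by Proposition~\ref{D-X-properties}(1), and $\mbc D_X$ is closed under c-pure subobjects, so $\mc M\in\mbc D_X$.

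The only genuinely delicate point is making sure the closure of $\mbc D_X$ under c-pure subobjects is correctly invoked: $\mbc D_X$ is defined as a definable subcategory of \QCohX, and c-purity in \QCohX\ coincides with c-purity in \OXMod\ by Lemma~\ref{concentrated-properties}(2), so there is no ambiguity about which notion of pure subobject is meant, and the standard theory of definable categories in \cite[Part III]{P} applies directly. Everything else is a bookkeeping exercise with the facts already established, so I do not expect any real obstacle; the proof is essentially an assembly of Lemma~\ref{natural-pure}, Remark~\ref{direct-image-in-D-X}, and Proposition~\ref{D-X-properties}.
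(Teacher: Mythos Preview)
Your proposal is correct and follows essentially the same approach as the paper: both arguments use Lemma~\ref{natural-pure} for g-purity of the displayed map, Proposition~\ref{D-X-properties}(2) to upgrade to c-purity when the source is in $\mbc D_X$, Remark~\ref{direct-image-in-D-X} plus closure of $\mbc D_X$ under c-pure subobjects for $(4)\Rightarrow(1)$, and the existence of pure-injective envelopes inside the definable subcategory $\mbc D_X$ together with Proposition~\ref{D-X-properties}(1) to handle $(2)$. The only cosmetic difference is that the paper proves $(1)\Rightarrow(2)$ directly by taking the pure-injective envelope of $\mc M$ itself inside $\mbc D_X$, whereas you route through $(3)$ first; the content is identical.
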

\begin{proof}
(1) $\Rightarrow$ (2): The definable subcategory $\mbc D_X$ is closed under taking (c-)pure-injective envelopes of its objects; since the pure-injectives in $\mbc D_X$ are g-pure-injective by Proposition \ref{D-X-properties} (1), this produces a c-pure embedding into a g-pure-injective quasicoherent sheaf.

(2) $\Rightarrow$ (1): $\mbc D_X$ contains all g-pure-injectives and is closed under taking c-pure subsheaves.

(1) $\Rightarrow$ (3): The monomorphism in question is always g-pure; by assumption, its domain belongs to $\mbc D_X$, therefore it is c-pure by Proposition \ref{D-X-properties} (2).

(3) $\Rightarrow$ (4) is clear.

(4) $\Rightarrow$ (1): The codomain of the monomorphism belongs to $\mbc D_X$ (Remark \ref{direct-image-in-D-X}), which is closed under taking c-pure subsheaves.
\end{proof}

The following lemma, ensuring that the sheaf cohomology vanishes on $\mbc D_X$, will prove useful in Section \ref{section-ProjLine}.

\begin{lemma}\label{D-cohomology-vanishes}
Let $X$ be a concentrated scheme. Then $\Hh^1(X, \mc M) = 0$ for every $\mc M \in \mbc D_X$.
\end{lemma}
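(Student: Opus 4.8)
The plan is to reduce the vanishing of $\Hh^1(X,\mc M)$ to the affine situation via the finite open affine cover and the Čech complex, using the fact—already extracted in the proof of Theorem \ref{c-ziegler-closed} and Remark \ref{direct-image-in-D-X}—that the components $\iota^{\qc}_{U_i,*}(\mc M|_{U_i})$ of the canonical g-pure embedding lie in $\mbc D_X$, together with Proposition \ref{D-X-properties}(2).

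\medskip

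First I would fix a finite open affine cover $U_1,\dots,U_n$ of $X$, which exists since $X$ is concentrated, and consider the canonical g-pure monomorphism
\[ \mc M \to \bigoplus_{1\le i\le n}\iota^{\qc}_{U_i,*}(\mc M|_{U_i}) \]
of Lemma \ref{natural-pure} (composed with the coherator adjustment of Lemma \ref{factor-through-coherator}, or directly in \QCohX\ as in Remark \ref{direct-image-qc-definable}). Since $\mc M\in\mbc D_X$, Proposition \ref{D-X-properties}(2) upgrades this to a \emph{c-pure} monomorphism in \QCohX. Now apply the left-exact functor $\Hh^0(X,-)=\Hom_{\QCohX}(\OX,-)$: the long exact cohomology sequence gives a portion
\[ \Hh^1(X,\mc M)\to \Hh^1\Bigl(X,\bigoplus_i\iota^{\qc}_{U_i,*}(\mc M|_{U_i})\Bigr)\to\Hh^1(X,\mc Q), \]
where $\mc Q$ is the cokernel. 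The key point is that $\OX$ is finitely presented for $X$ concentrated, so $\Hh^1(X,-)=\Ext^1_{\QCohX}(\OX,-)$ vanishes on c-pure-exact sequences in the first variable in the sense that $\Ext^1(\OX,-)$ carries the c-pure short exact sequence $0\to\mc M\to\bigoplus_i\iota^{\qc}_{U_i,*}(\mc M|_{U_i})\to\mc Q\to 0$ to a short exact sequence; equivalently, the connecting map $\Hh^0(X,\mc Q)\to\Hh^1(X,\mc M)$ is zero. Hence $\Hh^1(X,\mc M)$ injects into $\Hh^1(X,\bigoplus_i\iota^{\qc}_{U_i,*}(\mc M|_{U_i}))$.

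\medskip

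It therefore suffices to show $\Hh^1(X,\iota^{\qc}_{U,*}(\mc F))=0$ for $U\subseteq X$ open affine and $\mc F\in\QCohU$ arbitrary. Since $\iota_U$ is a concentrated (indeed affine) morphism and $U$ is affine, $\iota_{U,*}$ is exact on \QCohU\ and the Leray/base-change spectral sequence degenerates, giving $\Hh^i(X,\iota^{\qc}_{U,*}\mc F)\cong\Hh^i(U,\mc F)=0$ for $i\ge 1$ because $U$ is affine (Serre vanishing). One must be slightly careful that cohomology is computed in \QCohX\ rather than in \OXMod, but for $X$ concentrated these agree (the coherator is exact enough, or one invokes \cite[Lemma B.15]{TT} / the fact that quasicoherent cohomology agrees with sheaf cohomology on concentrated schemes). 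Finally, $\Hh^1$ commutes with the finite direct sum, so $\Hh^1(X,\bigoplus_i\iota^{\qc}_{U_i,*}(\mc M|_{U_i}))=\bigoplus_i\Hh^1(X,\iota^{\qc}_{U_i,*}(\mc M|_{U_i}))=0$, and the injection from the previous paragraph forces $\Hh^1(X,\mc M)=0$.

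\medskip

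The main obstacle I anticipate is the bookkeeping around \emph{which} category the $\Ext^1$/$\Hh^1$ is computed in and why a c-pure-exact sequence becomes genuinely exact after $\Hom_{\QCohX}(\OX,-)$: this rests on $\OX$ being a finitely presented object of \QCohX\ (valid since $X$ is concentrated), so that $\Ext^1_{\QCohX}(\OX,-)$ annihilates c-pure-exact sequences—this is the same mechanism already used in the proof of Proposition \ref{affinity-criterion}. Once that is in place, the rest is the standard reduction to the affine case via the cover, plus Serre vanishing.
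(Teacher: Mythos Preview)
Your argument works, but one step needs adjustment: the parenthetical ``(indeed affine)'' is not justified for merely concentrated $X$. In a quasiseparated but non-separated scheme the intersection of two affine opens need only be quasicompact, so the open immersion $\iota_U$ need not be an affine morphism, $\iota_{U,*}$ need not be exact, and the Leray spectral sequence need not degenerate. This does no real harm: the five-term exact sequence of Leray always yields an injection $\Hh^1(X,\iota_{U,*}\mc F)\hookrightarrow \Hh^1(U,\mc F)$, and the right-hand side vanishes since $U$ is affine. So only the case $i=1$ survives in general, but that is all you use.

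The paper's route is genuinely different. Rather than embedding $\mc M$ into a specific sheaf with vanishing $\Hh^1$, it shows that the class $\{\mc M\in\QCohX : \Hh^1(X,\mc M)=0\}$ is itself a definable subcategory---closed under direct limits by \cite[Lemma B.6]{TT}, under products by \cite[Corollary A.2]{CS}, and under c-pure subsheaves by precisely the $\OX$-is-finitely-presented observation you invoke. Since $\mbc D_X$ is generated as a definable subcategory by its pure-injectives, this reduces the vanishing to the g-pure-injectives $\mc N$, for which $\Ext^1_{\QCohX}(\OX,\mc N)=0$ because $\OX$ is flat (so every such extension is g-pure and hence split). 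Your approach bypasses the definability framework in favour of the concrete c-pure embedding $\mc M\hookrightarrow\bigoplus_i\iota_{U_i,*}^{\qc}(\mc M|_{U_i})$ supplied by Proposition~\ref{D-X-properties}(2), landing in sheaves whose $\Hh^1$ vanishes by Serre's theorem rather than by g-pure-injectivity. Both proofs pivot on the same mechanism---$\Hh^0(X,-)$ is exact on c-pure sequences because $\OX$ is finitely presented---and diverge only in what they reduce to; your version is more hands-on and avoids the external input from \cite{CS}, at the cost of the Leray spectral sequence.
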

\begin{proof}
Observe that the subcategory of \QCohX, where the first cohomology vanishes, is definable: The closure under direct limits follows from \cite[Lemma B.6]{TT}. The closure under direct products follows from \cite[Corollary A.2]{CS} and the fact that $\Hh^1(X,-) = \Ext_{\QCohX}^1(\OX,-)$. Finally, let $0 \to \mc A \to \mc B \to \mc C \to 0$ be a c-pure-exact sequence in \QCohX, where $\Hh^1(X,\mc B) = 0$. Using c-pure-exactness and Lemma \ref{global-sections-definable}, this sequence stays exact after applying the global sections functor, therefore the map
$\Hh^1(X, \mc A) \to \Hh^1(X, \mc B)$
is injective. Since its codomain vanishes, the same holds for its domain.

Thanks to this observation, it suffices to prove the assertion only for the pure-injectives of $\mbc D_X$, i.e.\ g-pure-injectives. To show that $\Hh^1(X, \mc N) = \Ext_{\QCoh}^1(\OX,\mc N) = 0$ for every g-pure-injective, consider a short exact sequence starting in $\mc N$ and ending in \OX; as \OX\ is flat on each open affine set, the sequence is g-pure-exact, and because $\mc N$ is g-pure-injective, the sequence splits as desired.
%
\end{proof}



%

Let us end this section by observing that the pure-injective objects of \QCohX\ are a bit ``mysterious'' from the sheaf point of view. The best we can say is that by the following lemma, their module of global sections is pure-injective over the ring $\OX(X)$, but that can be far from true on the rest of the open sets.

\begin{lemma}\label{global-sections-definable}
Let $X$ be a concentrated scheme. Then the functor of global sections, viewed as a functor from \QCohX\ to $\OX(X)$-modules, is definable.
\end{lemma}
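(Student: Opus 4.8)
The plan is to show that the global sections functor $\Gamma(X,-)\colon\QCohX\to\OX(X)\text{-Mod}$ commutes with products and with direct limits, which is exactly the definition of a definable functor. I will use the fact that $\Gamma(X,-)$ on \QCohX\ factors (up to natural isomorphism) as the composite of the fully faithful inclusion $\QCohX\ito\OXMod$ followed by the section functor $\mc M\mapsto\mc M(X)$ on \OXMod, because taking global sections does not see the difference between a quasicoherent sheaf and its underlying \OX-module.

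\begin{proof}
Since $X$ is concentrated, it is in particular quasicompact and quasiseparated, so $X$ itself is a concentrated open subset of $X$; by Lemma \ref{sections-definable} the functor $\mc M\mapsto\mc M(X)$ on \OXMod\ commutes with products and with direct limits. Now $\Gamma(X,-)$ on \QCohX\ is the restriction of this functor along the inclusion $\QCohX\ito\OXMod$. Products in \QCohX\ need not agree with products in \OXMod, but this is harmless: if $(\mc M_i)_{i\in I}$ is a family in \QCohX, the product $\prod^{\qc}_{i\in I}\mc M_i$ taken in \QCohX\ is by definition $\C\bigl(\prod_{i\in I}\mc M_i\bigr)$, where the inner product is taken in \OXMod; applying global sections and using that $\C$ is a right adjoint to the inclusion, so that $\Gamma(X,\C(\mc F))=\Hom_{\QCohX}(\OX,\C(\mc F))\cong\Hom_{\OXMod}(\OX,\mc F)=\Gamma(X,\mc F)$ for any $\mc F\in\OXMod$, we get
\[ \Gamma\Bigl(X,\prod\nolimits^{\qc}_{i\in I}\mc M_i\Bigr)\cong\Gamma\Bigl(X,\prod\nolimits_{i\in I}\mc M_i\Bigr)\cong\prod\nolimits_{i\in I}\Gamma(X,\mc M_i), \]
using Lemma \ref{sections-definable} for the second isomorphism. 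Hence $\Gamma(X,-)$ commutes with products. Direct limits in \QCohX\ coincide with those in \OXMod, so commutation of $\Gamma(X,-)$ with direct limits is immediate from Lemma \ref{sections-definable}. Therefore $\Gamma(X,-)$ is definable.
\end{proof}

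The only subtlety worth flagging is the mismatch of products between \QCohX\ and \OXMod; the point to get right is that applying $\Gamma(X,-)$ to the coherator of an \OX-module returns the same module of sections as applying $\Gamma(X,-)$ to the original \OX-module, which follows purely formally from the adjunction $\text{(inclusion)}\dashv\C$ together with $\OX\in\QCohX$. Everything else is a direct appeal to Lemma \ref{sections-definable} (equivalently \cite[009E]{Stacks}), so no genuine obstacle arises; the main thing is simply to organize the factorization cleanly rather than to prove anything substantially new.
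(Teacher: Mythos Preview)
Your proof is correct, but it takes a more circuitous route than the paper's. The paper observes directly that $\Gamma(X,-)\cong\Hom_{\QCohX}(\OX,-)$; since representable functors preserve all limits, commutation with products is immediate, with no need to pass through \OXMod\ or invoke the coherator. (Indeed, the identity $\Gamma(X,\C(\mc F))\cong\Gamma(X,\mc F)$ you prove via the adjunction is just a repackaging of the fact that $\Hom_{\QCohX}(\OX,-)$ preserves limits.) For direct limits the paper cites \cite[0097]{Stacks} directly rather than going through Lemma \ref{sections-definable}, though the content is the same. Your approach has the virtue of making explicit how the mismatch between products in \QCohX\ and \OXMod\ is resolved, which is conceptually useful elsewhere in the paper; the paper's approach is simply shorter and avoids the detour entirely.
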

\begin{proof}
The functor in question is naturally isomorphic to the representable functor $\Hom_{\QCohX}(\OX, -)$, which clearly commutes with products. Commuting with direct limits is \cite[0097]{Stacks}.
\end{proof}

Note that this lemma is very similar to Lemma \ref{sections-definable}; however, in the quasicoherent case, the functor of taking sections on an open set usually does not commute with direct products.

\begin{exmpl}
Proposition \ref{structure-sheaf-pi} below shows that the structure sheaf of the projective line is c-pure-injective in \QCohPk. However, on any open affine set (or stalk at any closed point), this sheaf is not pure injective. Since this violates Lemma \ref{open-pinj}, it is not c-pure-injective in \OXMod.
\end{exmpl}

A slightly more sophisticated example exhibits a similar behaviour even for g-pure-injectives in \QCohX:

\begin{exmpl}
Let $k$ be a field, $R = k[\![x, y]\!]$ the ring of power series in two commuting variables and $X = \Spec R$. Since $R$ is a commutative noetherian complete local domain, it is a pure-injective module over itself by \cite[Theorem 11.3]{JL}. Therefore the structure sheaf \OX\ is a g-pure-injective quasicoherent sheaf. However, for every proper distinguished open affine subset $U \subset X$, $\OX(U)$ is the localization of $R$ in a single element, which is a commutative noetherian domain, but not even local, hence not pure-injective by \cite[Theorem 11.3]{JL} again.
\end{exmpl}

\section{Example: Projective line}
\label{section-ProjLine}

This section is devoted to investigating purity in one of the simplest non-affine schemes, the projective line $\Pk$, where $k$ is any field. Our primary aim is to describe $\Zg(\QCohPk)$, but this scheme also provides several important examples. Since $\Pk$ is a noetherian, hence a concentrated, scheme all the results of the previous section apply.

First of all, recall that the finitely presented objects of \QCohPk\ are precisely the coherent sheaves, and each coherent sheaf decomposes uniquely (in the sense of the Krull-Schmidt Theorem) into a direct sum of indecomposable ones. These indecomposables are of two kinds:
\begin{itemize}
\item line bundles, i.e., the structure sheaf and its twists, which we denote $\mc O(n)$ ($n \in \Z$),
\item torsion sheaves, i.e., skyscrapers $\iota_{x,*}(F)$, where $x \in \Pk$ is a closed point and $F$ is a cyclic torsion module over the DVR $\mc O_{\Pk,x}$.
\end{itemize}
In the following, we simply use $\mc O$ for $\mc O(0)$, which is also the same thing as the structure sheaf $\mc O_{\Pk}$.

\begin{exmpl}\label{line-bundles-example}
For every $a, b, c, d \in \Z$ such that $a < b < d$, $a < c < d$ and $a + d = b + c$, there is a short exact sequence
\[ 0 \to \mc O(a) \to \mc O(b) \oplus \mc O(c) \to \mc O(d) \to 0 \]
in \QCohPk, which is non-split, since $\Hom_{\QCohPk}(\mc O(d), \mc O(b) \oplus \mc O(c)) = 0$. This sequence is not c-pure-exact, since it ends in a finitely presented object but does not split. On the other hand, passing to any stalk or any open affine set, we obtain a split short exact sequence of free modules, therefore the sequence of sheaves is g-pure-exact.
\end{exmpl}

For the projective line, we are able to give a better characterization of g-pure-exactness:
\begin{prop}
Let $0 \to \mc A \to \mc B \to \mc C \to 0$ be a short exact sequence in $\QCohPk$. Then the following statements are equivalent:
\begin{enumerate}
\item The sequence is g-pure-exact.
\item For each (indecomposable) torsion coherent sheaf $\mc T$, the sequence
\[ 0 \to \mc T \otimes \mc A \to \mc T \otimes \mc B \to \mc T \otimes \mc C \to 0 \]
is exact.
\item For each (indecomposable) torsion coherent sheaf $\mc T$, the sequence
\[ 0 \to \Hom_{\QCohPk}(\mc T, \mc A) \to \Hom_{\QCohPk}(\mc T, \mc B) \to \Hom_{\QCohPk}(\mc T, \mc C) \to 0 \]
is exact.
\end{enumerate}
\end{prop}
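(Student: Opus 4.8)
The plan is to prove that each of (1), (2), (3) is equivalent to the condition that $0\to\mc A_x\to\mc B_x\to\mc C_x\to0$ is pure-exact over the local ring $\mc O_{\Pk,x}$ for every closed point $x$ of $\Pk$. For (1) this is immediate: g-purity of a short exact sequence of quasicoherent sheaves means stalkwise pure-exactness (Definition~\ref{g-pure-def} together with the equivalences of \cite{EEO} quoted there), and the stalk at the generic point is the field $k(t)$, over which every short exact sequence is pure-exact; so g-purity is equivalent to pure-exactness at all the closed stalks.

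For (2) and (3) I would first recall that the indecomposable torsion coherent sheaves are precisely the skyscrapers $\mc T_{x,n}=\iota_{x,*}\bigl(\mc O_{\Pk,x}/(\pi_x^{n})\bigr)$, where $x$ ranges over the closed points, $\pi_x$ is a uniformizer of the DVR $\mc O_{\Pk,x}$ and $n\geq1$, and that every torsion coherent sheaf is a finite direct sum of these. Since $-\otimes\mc M$ and $\Hom_{\QCohPk}(-,\mc M)$ both carry a finite direct sum in the remaining variable to a finite product of the corresponding sequences, and a finite product of complexes is exact iff each factor is, it suffices to test (2) and (3) against the sheaves $\mc T_{x,n}$. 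Fix $x$ and $n$ and put $\mc O_x=\mc O_{\Pk,x}$, $\pi=\pi_x$, $F=\mc O_x/(\pi^{n})$. The sheaf tensor product commutes with stalks and $\mc T_{x,n}$ has stalk $F$ at $x$ and $0$ at every other point, so the sequence in (2) for $\mc T_{x,n}$ is exact if and only if $F\otimes_{\mc O_x}(0\to\mc A_x\to\mc B_x\to\mc C_x\to0)$ is exact. Likewise, a morphism out of a skyscraper at a closed point with finite-length stalk is determined by, and can be prescribed by, its germ at that point (one sees this by localizing at $x$ on an affine chart containing it), so $\Hom_{\QCohPk}(\mc T_{x,n},-)\cong\Hom_{\mc O_x}(F,(-)_x)$ naturally, and the sequence in (3) for $\mc T_{x,n}$ is exact if and only if $\Hom_{\mc O_x}(F,-)$ applied to $0\to\mc A_x\to\mc B_x\to\mc C_x\to0$ is exact.

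It then remains to carry out the standard computation over the DVR $\mc O_x$. Using the free resolution $0\to\mc O_x\xrightarrow{\pi^{n}}\mc O_x\to F\to0$, both $F\otimes_{\mc O_x}-$ and $\Hom_{\mc O_x}(F,-)$, applied to $0\to\mc A_x\to\mc B_x\to\mc C_x\to0$, remain exact if and only if the connecting homomorphism with source $\{c\in\mc C_x:\pi^{n}c=0\}$ and target $\mc A_x/\pi^{n}\mc A_x$ vanishes; moreover this connecting homomorphism is the same in both cases, namely it sends the class of $c$ (with $\pi^{n}c=0$, lifted to some $b\in\mc B_x$) to the class of $\pi^{n}b$, which lies in $\mc A_x$. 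Its vanishing is precisely the equality $\pi^{n}\mc B_x\cap\mc A_x=\pi^{n}\mc A_x$; and since every proper nonzero ideal of $\mc O_x$ is of the form $(\pi^{m})$ and every finitely presented $\mc O_x$-module is a finite direct sum of copies of $\mc O_x$ and of modules $\mc O_x/(\pi^{m})$, the conjunction of these equalities over all $n\geq1$ is exactly pure-exactness of $0\to\mc A_x\to\mc B_x\to\mc C_x\to0$ over $\mc O_x$. Letting $x$ and $n$ vary then gives (2)$\iff$(3)$\iff$[pure-exact at every closed stalk]$\iff$(1). The step needing most care is the reduction in the second paragraph — identifying the global conditions (2) and (3) with the local statements over $\mc O_{\Pk,x}$, so that a torsion sheaf supported at $x$ only "sees" the stalk there — together with the verification that the two connecting homomorphisms literally coincide; everything else is the cited stalkwise description of g-purity and elementary module theory over a discrete valuation ring.
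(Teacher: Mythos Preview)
Your proof is correct. The paper's argument differs slightly in its reduction: for (1)$\Leftrightarrow$(2) it invokes the definition of g-purity via tensoring, the classification of coherent sheaves on $\Pk$, and direct limits, while for (1)$\Leftrightarrow$(3) it passes to an open affine $U$ containing the support of $\mc T$, uses that $\mc T$ is both the direct image and the extension by zero of $\mc T|_U$, and then appeals to purity over the PID $\mc O(U)$. You instead reduce everything uniformly to the stalks at closed points (DVRs), using that tensor commutes with stalks for (2), that $\Hom_{\QCohPk}(\mc T_{x,n},-)\cong\Hom_{\mc O_x}(F,(-)_x)$ for (3), and that the generic stalk is a field for (1). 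The payoff of your route is the symmetric treatment of (2) and (3): by writing down the free resolution $0\to\mc O_x\xrightarrow{\pi^n}\mc O_x\to F\to0$ you observe that the connecting homomorphisms $\Tor_1^{\mc O_x}(F,\mc C_x)\to F\otimes\mc A_x$ and $\Hom_{\mc O_x}(F,\mc C_x)\to\Ext^1_{\mc O_x}(F,\mc A_x)$ are literally the same snake map $\{c:\pi^n c=0\}\to\mc A_x/\pi^n\mc A_x$, so (2) and (3) collapse to the identical divisibility condition $\pi^n\mc B_x\cap\mc A_x=\pi^n\mc A_x$. The paper does not make this identification explicit. The only step you might spell out a little more is the isomorphism $\Hom_{\QCohPk}(\mc T_{x,n},\mc M)\cong\Hom_{\mc O_x}(F,\mc M_x)$: concretely, $\mc T_{x,n}$ is the extension by zero from any affine open containing $x$, so the adjunction reduces the Hom to $\Hom_R(R/\mathfrak m^n,\mc M(U))$, which equals its own localization at $\mathfrak m$ since $R/\mathfrak m^n$ is local.
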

\begin{proof}
It is clear from the description of coherent sheaves over $\Pk$ that we may restrict to the indecomposable coherent sheaves in (2) and (3).

(1) $\Rightarrow$ (2) is clear from the definition of g-pure-exactness. For (2) $\Rightarrow$ (1) it suffices to recall that the tensor product preserves direct limits, every quasicoherent sheaf is the direct limit of coherent ones, and tensoring by a line bundle is always exact.

To prove (1) $\Leftrightarrow$ (3), first recall that for quasicoherent sheaves, g-pure-exactness is equivalent to pure exactness on each open affine. Observe that the support of an indecomposable torsion coherent sheaf $\mc T$ is a single closed point of $\Pk$; let $U$ be any open affine set containing this point. In such a case, not only is $\mc T$ the direct image of its restriction to $U$, but it is also the extension by zero of this restriction. The adjunction now implies that the exactness of the sequence in (3) is equivalent to the exactness of
\begin{multline*}
 0 \to \Hom_{\QCohU}(\mc T|_U, \mc A|_U) \to \Hom_{\QCohU}(\mc T|_U, \mc B|_U) \to \\ \to \Hom_{\QCohU}(\mc T|_U, \mc C|_U) \to 0.
\end{multline*}
Since $U$ is affine and the ring of sections over $U$ is a PID, this new sequence is exact for every indecomposable torsion coherent $\mc T$ if and only if the sequence of $\mc O(U)$-modules
\[ 0 \to \mc A(U) \to \mc B(U) \to \mc C(U) \to 0 \]
is pure-exact---recall that over a PID, purity can be checked just by using finitely generated (= presented) indecomposable torsion modules, which follows from the description of finitely generated modules. Since $\mc T$ runs over all torsion indecomposable coherent sheaves, $U$ runs over all open affine subsets of $\Pk$, and every torsion $\mc O(U)$-module extends to a torsion coherent sheaf, we are done.
\end{proof}

Let us proceed with describing the indecomposable g-pure-injectives. This is easy thanks to Corollary \ref{qcoh-c-indecomposable}, taking into account that $\Pk$ is covered by affine lines. The Ziegler spectrum of a PID (more generally, a Dedekind domain) is described e.g.\ in \cite[5.2.1]{P}. Therefore, for each closed point $x \in \Pk$, there are the following sheaves:
\begin{itemize}
\item all the indecomposable torsion coherent sheaves based at $x$ (an $\mathbb N$-indexed family),
\item the ``Pr\"ufer'' sheaf: $\iota_{x,*}(P)$, where $P$ is the injective envelope of the unique simple \OXx-module,
\item the ``adic'' sheaf: for every $U \subseteq X$ open, the module of sections is either the completion $\overline{\OXx}$ (if $x \in U$), or the fraction field of this completion (if $x \notin U$); in other words, this is the coherator of $\iota_{x,*}(\overline{\OXx})$.
\end{itemize}
Finally, there is the (g-pure-)injective constant sheaf, assigning to each open set the residue field of the generic point of $\Pk$. We will refer to this sheaf as the generic point of $\Zg(\QCohPk)$.

This observation can be informally summarized by saying that the geometric part of $\Zg(\QCohPk)$ is the ``projectivization'' of the Ziegler spectrum of an affine line.

However, by Proposition \ref{affinity-criterion}, there have to be c-pure-injectives which are not g-pure-injective. An example of this phenomenon is the structure sheaf $\mc O$:

\begin{prop}\label{structure-sheaf-pi}
The structure sheaf is $\Sigma$-c-pure-injective, i.e.\ any direct sum of its copies is c-pure-injective. The same holds for all line bundles.
\end{prop}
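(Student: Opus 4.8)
The plan is to reduce $\Sigma$-c-pure-injectivity of $\mc O$ to a statement about modules over the endomorphism ring, using the standard characterization: an object $M$ of a locally finitely presented category is $\Sigma$-pure-injective if and only if the descending chain condition holds on the subgroups of $\Hom(M,M^{(I)})$ (equivalently $\Hom(F,M^{(I)})$ for $F$ finitely presented) that are definable by pp-formulas; see \cite[Theorem 4.4.24]{P}. Since the finitely presented objects of $\QCohPk$ are the coherent sheaves, and these are built (by the Krull--Schmidt description recalled above) from line bundles $\mc O(n)$ and torsion skyscrapers, it suffices to control $\Hom_{\QCohPk}(\mc O(n),\mc O^{(I)})$ and $\Hom_{\QCohPk}(\iota_{x,*}(F),\mc O^{(I)})$ as modules, uniformly in $I$. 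The torsion case is immediate: $\Hom_{\QCohPk}(\iota_{x,*}(F),\mc O^{(I)}) = 0$ because $\mc O^{(I)}$ is torsion-free (it is a subsheaf of its generic stalk, a $k(\Pk)$-vector space), so there are no descending chains to worry about there.

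The substantive part is the line bundles. First I would observe $\Hom_{\QCohPk}(\mc O(n),\mc O) = \Hh^0(\Pk,\mc O(-n))$, which is $0$ for $n>0$ and is a finite-dimensional $k$-vector space (of dimension $1-n$) for $n\le 0$; and that $\Hom$ commutes with the coproduct $\mc O^{(I)}$ since $\mc O(n)$ is finitely presented, so $\Hom_{\QCohPk}(\mc O(n),\mc O^{(I)}) \cong \Hh^0(\Pk,\mc O(-n))^{(I)}$. Thus each such Hom-group is a coproduct of copies of a fixed finite-dimensional $k$-vector space, hence a semisimple module over $k$ (and over $E=\End_{\QCohPk}(\mc O)=k$). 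The key point is then that over a field $k$, every module is $\Sigma$-pure-injective --- indeed $k$ is a semisimple ring, so $k\text{-Mod}$ has DCC on pp-definable subgroups trivially (the pp-definable subgroups of any module are just the submodules cut out by finite systems, and over a field these behave well). Feeding this back, the pp-definable subgroups of $\Hom_{\QCohPk}(F,\mc O^{(I)})$ for $F$ finitely presented satisfy DCC uniformly in $I$, which is exactly the criterion for $\mc O$ to be $\Sigma$-c-pure-injective.

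For the general line bundle $\mc O(m)$, I would not repeat the argument but instead invoke the automorphism/twist equivalence: the autoequivalence $-\otimes\mc O(m)$ of $\QCohPk$ is exact and preserves finitely presented objects, hence preserves c-purity and c-pure-injectivity, and sends $\mc O$ to $\mc O(m)$; therefore $\mc O(m)$ is $\Sigma$-c-pure-injective as soon as $\mc O$ is. The main obstacle I anticipate is making the ``uniformity in $I$'' genuinely precise --- one must check that the pp-definable subgroup lattice of $\Hom(F,\mc O^{(I)})$ does not grow more complicated as $I$ grows, and the cleanest route is to note that all the relevant Hom-groups are modules over $\End(\mc O)=k$ via a functor that is compatible with the coproduct, so the whole situation is controlled by the (trivially Noetherian/Artinian, since finite-dimensional per summand) representation theory over a field. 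An alternative, perhaps slicker, packaging: show directly that $\mc O$ is endofinite is too strong (it is not, since $\Hom(\mc O,\mc O^{(I)})$ is infinite-dimensional for infinite $I$), so one really does need the $\Sigma$-pure-injectivity criterion rather than endofiniteness, and that is the delicate bookkeeping step.
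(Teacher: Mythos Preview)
Your core idea --- that $\Hom_{\QCohPk}(\mc F,\mc O)$ is finite-dimensional over $k$ for every coherent $\mc F$, and that this forces the descending chain condition on pp-definable subgroups --- is exactly the paper's second (model-theoretic) proof. But you have tangled the criterion and, in the process, talked yourself out of the cleanest formulation.

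The $\Sigma$-pure-injectivity criterion is DCC on pp-definable subgroups of $\mc O$ \emph{itself}, sort by sort: i.e.\ on the $k$-subspaces of $\Hom_{\QCohPk}(\mc F,\mc O)$ cut out by pp-formulas, for each coherent $\mc F$. Since that Hom-group is finite-dimensional over $k$ and every pp-definable subgroup is a $k$-subspace, DCC is automatic. You instead pass to $\mc O^{(I)}$ and try to control pp-definable subgroups of $\Hom(\mc F,\mc O^{(I)})$, which is infinite-dimensional; your ``uniformity in $I$'' worry is a symptom of this detour, and as written you never actually establish the DCC there. Drop the $(I)$ and the argument is one line.

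Relatedly, your final remark is backwards: $\mc O$ \emph{is} endofinite. Endofiniteness asks that $\Hom(\mc F,\mc O)$ have finite length over $\End(\mc O)=k$ for each finitely presented $\mc F$; it says nothing about $\Hom(\mc O,\mc O^{(I)})$. So the ``slicker packaging'' you reject is in fact available and is precisely what the paper uses. Your reduction to line bundles via twisting is fine and matches the paper. For comparison, the paper also gives a second, elementary proof: it computes $\mc O^I$ in $\QCohPk$ explicitly via the coherator, identifies $\mc O^{(I)}\hookrightarrow\mc O^I$ on the standard affine cover with $k^{(I)}[x]\hookrightarrow k^I[x]$ (and the analogous inclusions on $V$ and $U\cap V$), and splits it by choosing a $k$-linear splitting of $k^{(I)}\hookrightarrow k^I$.
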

We give two proofs, one rather direct, illustrating the technique of computing the coherator over concentrated schemes, the other requiring more framework from model theory. Since twisting is an autoequivalence of \QCohPk, we may restrict to $\mc O$ in both proofs.
\begin{proof}[Elementary proof]
We prove that for any set $I$, the inclusion $\mc O^{(I)} \ito \mc O^I$ splits. Let $U$, $V$ be open affine subsets of $\Pk$ such that $\mc O(U) = k[x]$, $\mc O(V) = k[x^{-1}]$, and $\mc O(U \cap V) = k[x,x^{-1}]$. For the direct sum, the computation is easy: $\mc O^{(I)}(U) = k[x]^{(I)}$, $\mc O^{(I)}(V) = k[x^{-1}]^{(I)}$, and $\mc O^{(I)}(U \cap V) = k[x,x^{-1}]^{(I)}$.

To compute the direct product in \QCohPk, we have to compute the coherator of the product in the category of all sheaves. The way to do that is described in \cite[B.14]{TT}:
\begin{align*}
\mc O^I(U) &= \ker\bigl( \mc O(U)^I \oplus (\mc O(V)^I \otimes_k k[x]) \to \mc O(U \cap V)^I \bigr),\\
\mc O^I(V) &= \ker\bigl( (\mc O(U)^I \otimes_k k[x^{-1}]) \oplus \mc O(V)^I \to \mc O(U \cap V)^I \bigr),\\
\mc O^I(U \cap V) &= \ker\bigl( (\mc O(U)^I \otimes_k k[x^{-1}]) \oplus (\mc O(V)^I \otimes_k k[x]) \to \mc O(U \cap V)^I \bigr).
\end{align*}
Therefore $\mc O^I(U)$ is the submodule of $\mc O(U)^I = k[x]^I$ consisting of sequences of polynomials with bounded degree, similarly for $\mc O^I(V)$; $\mc O^I(U \cap V)$ consists of sequences of polynomials in $x$ and $x^{-1}$ with degree bounded both from above and below.

Observe that $\mc O^I(U)$ can be identified with $k^I[x]$, polynomials over the ring $k^I$ of arbitrary sequences, whereas $\mc O^{(I)}(U)$ corresponds to $k^{(I)}[x]$; analogous assertions hold for $V$ and $U \cap V$. The inclusion of $k$-vector spaces $k^{(I)} \ito k^I$ splits, and this splitting naturally lifts to each of $\mc O(U)$, $\mc O(V)$, $\mc O(U \cap V)$, commuting with the restriction maps, hence defining a splitting of the inclusion $\mc O^{(I)} \ito \mc O^I$ as desired.
\end{proof}

\begin{proof}[Model-theoretic proof]
The category \QCohPk\, being locally finitely presented, is equivalent to the category of flat contravariant functors on its subcategory, $\mbc C = \operatorname{coh}(\Pk)$, of finitely presented objects, via the embedding taking a quasicoherent sheaf $\mc M$ to the representable functor $(-,\mc M)$ restricted to $\mbc C$.  That is a definable subcategory of the category $\operatorname{Mod-}\mbc C$ of all contravariant functors from \QCohPk\ to $k$ (see, e.g., \cite[\S 18]{P3}).  We regard such functors as multisorted modules (as in \cite{P4}), with the set of elements of $\mc M \in \QCohPk$ in sort $(\mc F,-)$, for $\mc F \in \mbc C$, being $(\mc F, \mc M)$.  The model theory of modules then applies.  In particular $\mc M$ is $\Sigma$-pure-injective exactly if each sort $(\mc F,\mc M)$ has the descending chain condition on pp-definable subgroups (cf.\ \cite[Theorem 4.4.5]{P}). Since, for each $\mc F \in \mbc C$, $\Hom_{\QCohPk}(\mc F, \mc O)$ is finite-dimensional over $k$, this descending chain condition is satisfied (every pp-definable subgroup is a $k$-subspace).
\end{proof}

In fact, the list is now complete. To prove this, we use Ziegler spectra of derived categories, but first we need the following observations for the Ext functors and cohomology:


\begin{lemma}\label{Ext-vanishing-definable}
For every $n \in \Z$, the class of objects $\mc M \in \QCohPk$ such that
\[ \Ext^1_{\QCohPk}(\mc O(n), \mc M) = 0 \]
is a definable subcategory of\/ \QCohPk, containing all g-pure-injectives.
\end{lemma}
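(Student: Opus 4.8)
The plan is to reduce to the untwisted case $n=0$ and then lean on (the proof of) Lemma~\ref{D-cohomology-vanishes}. Twisting by $\mc O(-n)$ is an exact autoequivalence of \QCohPk, and since the functor $-\otimes\mc O(n)$ is locally isomorphic to the identity (as $\mc O(n)$ is locally free of rank one), it commutes with direct limits and direct products, sends c-pure-exact sequences to c-pure-exact sequences, and preserves g-pure-injectivity. Because $\Ext^1_{\QCohPk}(\mc O(n),\mc M)\cong\Ext^1_{\QCohPk}(\mc O,\mc M(-n))=\Hh^1(\Pk,\mc M(-n))$, the class in the statement is exactly the image under this autoequivalence of the class $\{\mc M\in\QCohPk : \Hh^1(\Pk,\mc M)=0\}$; so it will be definable, and will contain all g-pure-injectives, as soon as the latter class does.

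For the untwisted class I would argue definability precisely as in the first half of the proof of Lemma~\ref{D-cohomology-vanishes}: closure under direct limits is \cite[Lemma B.6]{TT} (cohomology on the noetherian one-dimensional scheme $\Pk$ commutes with filtered colimits); closure under direct products follows from $\Hh^1(\Pk,-)=\Ext^1_{\QCohPk}(\mc O,-)$ together with \cite[Corollary A.2]{CS}; and closure under c-pure subsheaves uses that $\Gamma(\Pk,-)$ is a definable functor by Lemma~\ref{global-sections-definable}, so that a c-pure-exact sequence stays exact after applying $\Gamma(\Pk,-)$ and the long exact cohomology sequence forces $\Hh^1(\Pk,\mc A)\to\Hh^1(\Pk,\mc B)$ to be injective. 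A full subcategory closed under direct products, direct limits and c-pure subobjects is definable, which gives the first assertion.

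Next I would check that every g-pure-injective $\mc N$ lies in this class. One route is to note that g-pure-injectives are in particular c-pure-injective, hence lie in $\mbc D_{\Pk}$ by Proposition~\ref{D-X-properties}(1) (or directly: split the g-pure monomorphism of Lemma~\ref{natural-pure} through objects of $\mbc D_{\Pk}$), and then apply Lemma~\ref{D-cohomology-vanishes}. A self-contained alternative is the argument already used in Proposition~\ref{affinity-criterion}: an arbitrary class in $\Ext^1_{\QCohPk}(\mc O,\mc N)$ is represented by a short exact sequence $0\to\mc N\to\mc E\to\mc O\to 0$, which is g-pure because each stalk sequence $0\to\mc N_x\to\mc E_x\to\mc O_{\Pk,x}\to 0$ splits ($\mc O_{\Pk,x}$ being free), hence the sequence splits since $\mc N$ is g-pure-injective; thus $\Ext^1_{\QCohPk}(\mc O,\mc N)=0$.

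The twist reduction and the splitting argument for g-pure-injectives are routine. The one genuinely delicate point is closure under direct products: products in \QCohPk\ are formed via the coherator and do not coincide with products of sheaves, so the commutation of $\Ext^1_{\QCohPk}(\mc O,-)$ with products is not formal and really requires the input from \cite{CS}. Once that is in hand, the three closure properties assemble into definability and the proof is complete.
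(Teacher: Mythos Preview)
Your proposal is correct and follows essentially the same route as the paper: reduce via the twisting autoequivalence to $\Hh^1(\Pk,-)$, then invoke the argument of Lemma~\ref{D-cohomology-vanishes} for definability, and deduce the g-pure-injective case from that lemma (the paper phrases this last step via the twist-invariance of the indecomposable g-pure-injectives, which amounts to your observation that twisting preserves g-pure-injectivity). Your self-contained alternative for the g-pure-injective part and your explicit flagging of the product-closure subtlety are helpful elaborations, but not a different approach.
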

\begin{proof}
Taking into account that twisting is an autoequivalence of \QCohPk, we have natural equivalence
\[ \Ext^1_{\QCohPk}(\mc O(n), \mc M) \cong \Ext^1_{\QCohPk}(\mc O, \mc M \otimes \mc O(-n)) \cong \Hh^1(\Pk, \mc M \otimes \mc O(-n)) \]
and the proof of definability of the vanishing class continues as in the proof of Lemma \ref{D-cohomology-vanishes}. 
%
%
%

The statement about g-pure-injectives suffices to be checked only for the indecomposable ones. Since these are invariant under twists, this follows from Lemma \ref{D-cohomology-vanishes}.
\end{proof}

We are now prepared to prove that we have successfully identified all indecomposable c-pure-injectives.

\begin{thm}\label{Zg-QCoh-Pk}
Every indecomposable c-pure-injective object of\/ \QCohPk\ is either g-pure-injective or a line bundle.
\end{thm}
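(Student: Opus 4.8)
The plan is to pass to the derived category and exploit the Beilinson equivalence together with the known Ziegler spectrum of the Kronecker algebra. Put $T = \mc O \oplus \mc O(1)$; this is a tilting object of \QCohPk\ whose endomorphism ring is the Kronecker algebra $\Lambda$ (the path algebra of the quiver with two vertices and two parallel arrows), and $\RHom_{\QCohPk}(T,-)$ induces a triangle equivalence $\Phi\colon \D(\QCohPk) \simeq \D(\Lambda)$. Since $\Pk$ is smooth of dimension one, equivalently $\Lambda$ is hereditary, every object of $\D(\Lambda)$ is a direct sum of shifts of its cohomology modules; for $\mc M \in \QCohPk$ the only nonzero cohomology of $\Phi(\mc M)$ sits in degrees $0$ and $1$, where it is $\Hom_{\QCohPk}(T,\mc M)$, respectively $\Ext^1_{\QCohPk}(T,\mc M)$.

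\emph{Step 1: an indecomposable c-pure-injective $\mc M \in \QCohPk$ is pure-injective, as a stalk complex, in $\D(\QCohPk)$.} The functors $\mc M \mapsto \Hom_{\QCohPk}(T,\mc M)$ and $\mc M \mapsto \Ext^1_{\QCohPk}(T,\mc M)$ from \QCohPk\ to $\Lambda$-modules are definable: at the two vertices they are $\Hh^i(\Pk,-)$ and $\Hh^i(\Pk,(-)\otimes\mc O(-1))$ (with $i=0$, resp.\ $i=1$), twisting by a line bundle is an autoequivalence, $\Hh^0(\Pk,-) = \Hom_{\QCohPk}(\mc O,-)$ is definable by Lemma \ref{global-sections-definable}, and $\Hh^1(\Pk,-) = \Ext^1_{\QCohPk}(\mc O,-)$ commutes with direct limits and with direct products (the latter being the product-commutation input already used in the proof of Lemma \ref{D-cohomology-vanishes}, cf.\ \cite[Corollary A.2]{CS}). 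A definable functor preserves pure-injectivity \cite[Corollary 18.2.5]{P}, so $\Hom_{\QCohPk}(T,\mc M)$ and $\Ext^1_{\QCohPk}(T,\mc M)$ are pure-injective $\Lambda$-modules; as $\Lambda$ is hereditary, a complex whose cohomology modules are pure-injective is pure-injective in $\D(\Lambda)$, so $\Phi(\mc M)$ is pure-injective in $\D(\Lambda)$ and hence $\mc M$ is pure-injective in $\D(\QCohPk)$. Being indecomposable in \QCohPk\ (hence in $\D(\QCohPk)$, the embedding being full and faithful), $\mc M$ is an indecomposable pure-injective object of $\D(\QCohPk)$.

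\emph{Step 2: enumerating the possibilities.} Since $\Lambda$ is hereditary, every object of $\D(\Lambda)$ is a direct sum of shifts of modules, and one checks that the indecomposable pure-injective objects of $\D(\Lambda)$ are precisely the shifts $N[n]$ ($n\in\Z$) of indecomposable pure-injective $\Lambda$-modules $N$, with purity restricting to module-purity in each shift. For the Kronecker algebra these modules are classical (Ringel; see also \cite[5.2.1]{P}): the finite-dimensional preprojective, the finite-dimensional preinjective, and the finite-dimensional regular modules in the tubes, the Prüfer and the adic modules at each tube, and the generic module. Transporting through $\Phi$ and using that $\mc M$ lies in the standard heart \QCohPk\ of $\D(\QCohPk)$, so that $\Phi(\mc M)$ lies in the tilted heart $\Phi(\QCohPk)$, which is concentrated in cohomological degrees $0$ and $1$, pins $\Phi(\mc M)$ down: it is a preprojective module in degree $0$ (these are the $\Phi(\mc O(m))$, $m\geq 0$), a preinjective module placed in degree $1$ (these are the $\Phi(\mc O(m))$, $m<0$), a finite-dimensional regular module in degree $0$ (the $\Phi(\iota_{x,*}F)$), or the degree-$0$ image of a Prüfer, adic or generic sheaf. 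Hence $\mc M$ is either a line bundle $\mc O(m)$, or a torsion coherent sheaf, a Prüfer sheaf, an adic sheaf, or the generic sheaf; and each member of the latter four families is g-pure-injective, by Lemma \ref{pinj-sky} together with the coherator preserving g-pure-injectivity, exactly as recorded in the list preceding the theorem. This establishes the claim.

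The main obstacle is Step 1: that c-pure-injectivity in \QCohPk\ is inherited by the stalk complex in $\D(\QCohPk)$, and inside it the assertion that $\Hh^1(\Pk,-)$ commutes with the direct products of \QCohPk, which differ from products in \OXMod\ and are computed via the coherator. One extracts this from the product statement underlying Lemma \ref{D-cohomology-vanishes}, or computes $\Hh^1$ from the \v{C}ech complex of the standard two-affine cover and analyses the product coherator as in the elementary proof of Proposition \ref{structure-sheaf-pi}. A secondary, purely bookkeeping, point is the precise identification under the Beilinson equivalence of the preprojective / preinjective / regular trichotomy with the line bundles and torsion sheaves and with the cohomological degrees $0$ and $1$; this is standard, but it is what justifies the degree constraints used in Step 2.
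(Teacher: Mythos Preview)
Your proof is correct and takes essentially the same approach as the paper's: both pass through the Beilinson equivalence $\D(\Pk)\simeq\D(\Lambda)$, use that for hereditary $\Lambda$ the Ziegler spectrum of $\D(\Lambda)$ consists of shifts of $\Zg(\Lambda)$, and then invoke the known list of indecomposable pure-injective Kronecker modules. The only organisational difference is direction: you argue forwards---your Step~1 shows explicitly, via definability of $\Hom(T,-)$ and $\Ext^1(T,-)$, that an indecomposable c-pure-injective sheaf becomes an indecomposable pure-injective object of $\D(\Pk)$---whereas the paper argues backwards, verifying that the candidate set $Z$ of line bundles and g-pure-injectives already accounts, up to shift, for every point of $\Zg(\D(R))$.
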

\begin{proof}
Let $R = k\tilde A_1$ be the path algebra of the Kronecker quiver. Let $\D(R)$ denote the (unbounded) derived category of the category of right $R$-modules and $\D(\Pk)$ the (also unbounded) derived category of \QCohPk. By the results of \cite{B}, the functor
\[ F = \RHom_{\QCohPk}(\mc O \oplus \mc O(1), -)\colon \D(\Pk) \to \D(R) \]
is a triangulated equivalence. Since $R$ is hereditary, by \cite[Theorem 17.3.22]{P}, its Ziegler spectrum is just the union of all shifts of the Ziegler spectrum of $R$, embedded into $\D(R)$ as complexes with cohomology concentrated in degree $0$.

Let $Z$ be the (representative) set containing all twists of the structure sheaf and all indecomposable g-pure-injectives of \QCohPk. The Ziegler spectrum of $\D(\Pk)$ contains all shifts of $Z$; therefore, since $F$ is an equivalence, to show that $Z$ is indeed the Ziegler spectrum of \QCohPk\ it suffices to show that the shifts of $F(Z)$ cover $\Zg(\D(R))$.

The Ziegler spectrum of $R$ is described in \cite[8.1]{P}, so the checking is only a matter of computation. From Lemma \ref{Ext-vanishing-definable} we get that
\[ \Ext^1_{\QCohPk}(\mc O \oplus \mc O(1), \mc N) = 0 \]
for every indecomposable g-pure-injective $\mc N$, therefore $F$ simply sends the torsion, ``Pr\"ufer'', ``adic'' and generic sheaves to the corresponding points of $\Zg(R)$ put into the same cohomological degree in $\D(R)$.

Using Serre duality \cite[Theorem 7.1]{H}, one obtains the same Ext-vanishing for $\mc O(n)$ for all $n \geq 0$, and it is easy to observe that these line bundles are sent to preprojective $R$-modules via $F$. On the other hand, we have
\[ \Hom_{\QCohPk}(\mc O \oplus \mc O(1), \mc O(n)) = 0 \]
for $n < 0$, and using Serre duality again we get that these line bundles are mapped to preinjective $R$-modules, just shifted to the neighbouring cohomological degree in $\D(R)$.

We conclude that each point of $\Zg(\D(R))$ is a shift of an object from $F(Z)$ as desired.
\end{proof}

Having now the complete description of the points of $\Zg(\QCohPk)$, let us investigate the topology. The geometric part (which forms a closed subset by Theorem \ref{c-ziegler-closed}) is easy to handle and basically follows the description of the Ziegler spectrum of a Dedekind domain (cf.\ \cite[Theorem 5.2.3]{P}). The following observation describes how the line bundles sit in the Ziegler topology:

\begin{prop}\label{Pk-topology}
Every line bundle is a closed and isolated point of\/ $\Zg(\QCohPk)$. Any set of line bundles $\mc O(n)$, where $n$ is bounded from above, is closed. Any set of line bundles $\mc O(n)$, where $n$ is not bounded from above, additionally contains all the ``adic'' sheaves and the generic point in its closure.
\end{prop}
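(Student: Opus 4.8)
The plan is to transfer the entire question to the Kronecker quiver $R = k\tilde A_1$ via the derived equivalence $F = \RHom_{\QCohPk}(\mc O \oplus \mc O(1), -)$ used in the proof of Theorem \ref{Zg-QCoh-Pk}, together with the known topology of $\Zg(R)$ from \cite[8.1]{P}. Under $F$, the line bundles $\mc O(n)$ for $n \ge 0$ become the preprojective $R$-modules, the line bundles $\mc O(n)$ for $n < 0$ become the preinjective $R$-modules (shifted one cohomological degree), and the indecomposable g-pure-injectives (torsion, Pr\"ufer, adic, generic) become the corresponding tubular/generic points of $\Zg(R)$, all sitting in a single cohomological degree. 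Since $\Zg(\D(R))$ is the disjoint union of shifts of $\Zg(R)$ and closure cannot mix shifts (each shift is both open and closed), all topological questions about line bundles and the geometric part reduce to the corresponding questions inside a single copy of $\Zg(R)$.

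First I would recall the relevant facts about $\Zg(R)$ for $R$ the Kronecker algebra: each preprojective module $P_n$ is a finitely presented indecomposable of finite endolength, hence an isolated point, and it is closed because its only possible accumulation would be detected by pp-formulas that already separate it (concretely, finitely presented modules of finite endolength over a tame hereditary algebra are isolated closed points — this is in \cite[Chapter 8]{P}). This gives the first sentence: every line bundle is closed and isolated, via $F$. For a set $S$ of preprojectives that is ``bounded'' (i.e. corresponds to $\{\mc O(n) : n \le N\}$, a cofinite-complement-free set on the preprojective component), I would use that such a set is closed in $\Zg(R)$: the preprojective component of the Auslander–Reiten quiver, truncated below, is a closed discrete subset, since the only accumulation points of an infinite set of preprojectives in $\Zg(R)$ are the generic point and the adic points, and these appear only when one goes arbitrarily far out the component. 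This is exactly the dichotomy recorded in \cite[Theorem 8.1.??]{P} (the description of $\Zg(k\tilde A_1)$): a set of preprojectives is closed iff it omits cofinitely many of them, equivalently iff the indices are bounded above in our labeling.

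The third sentence is the substantive one and is where I expect the main work. If $\{n_j\}$ is unbounded above, I must show $\overline{\{\mc O(n_j)\}}$ contains all adic sheaves and the generic point, and nothing else new (beyond the line bundles themselves). Transferred via $F$, this says: the closure of an unbounded set of preprojective $R$-modules in $\Zg(R)$ contains the generic module $G$ and all the $\mathfrak p$-adic modules (one for each closed point of $\Pk$, i.e. each point of the tubular family), but not the Pr\"ufer modules or the torsion modules. The containment of $G$ and the adics follows from the standard computation of the closure of the preprojective component over a tame hereditary algebra: a pp-pair open on some $\mc O(n_j)$ for infinitely many $j$ must be open on $G$ and on each adic, because these are the points whose pp-type is the ``limit'' of the preprojective pp-types — this is the explicit topology in \cite[8.1]{P}, or one can argue that the definable subcategory generated by an unbounded family of preprojectives contains $G$ and all adics (it is the definable closure of the preprojective component, which is known). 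Conversely, to see the torsion and Pr\"ufer points are \emph{not} in the closure, one exhibits, for each such point $\mc T$, a pp-pair (equivalently, by Lemma \ref{Ext-vanishing-definable}-type reasoning, an Ext- or Hom-vanishing condition such as $\Hom(\mc O(n), \mc T) \ne 0$ for all large $n$ versus the relevant vanishing on preprojectives, or a finite-length obstruction) that isolates $\mc T$ away from cofinitely many $\mc O(n_j)$; concretely $\mc T$ has finite length in its sort while every $\mc O(n_j)$ has sections growing without bound, so a pp-formula bounding a dimension separates them. The hard part will be pinning down precisely this last separation cleanly rather than by appeal to the bare list in \cite[8.1]{P}; I would organize it as: (i) citing the full topological description of $\Zg(R)$ from \cite[Chapter 8]{P}, (ii) matching points under $F$ as already done in Theorem \ref{Zg-QCoh-Pk}, (iii) reading off the three claims. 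No new computation beyond that matching should be required.
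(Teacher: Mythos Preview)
Your transfer-to-Kronecker strategy is sound in principle, and the paper itself acknowledges it in the opening line of its proof (``This could be deduced from the description of the topology of the Ziegler spectrum over the Kronecker algebra $R$ but we can argue directly as follows''). So the route is legitimate. However, the paper chooses instead to argue entirely inside $\QCohPk$, using explicit Hom/Ext-vanishing conditions to isolate and close off each $\mc O(n)$, the definable class $\{\mc M : \Hom(\mc O(m),\mc M)=0\}$ to handle the bounded-above case directly, and a concrete direct-limit construction $\varinjlim \mc O(n_j) \cong \iota^{\qc}_{U,*}(\mc O|_U)$ (with $U$ the complement of a closed point) followed by a pure-injective hull to produce the adic sheaves in the unbounded case, finishing with a torsion-freeness argument to exclude the Pr\"ufer and torsion points. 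The paper's approach is more self-contained and avoids any appeal to the derived picture or to the fine structure of $\Zg(k\tilde A_1)$ beyond what was already used for Theorem~\ref{Zg-QCoh-Pk}.

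There is a real slip in your execution that you should fix. You write that a bounded-above set of line bundles corresponds to ``a set $S$ of preprojectives'', and then argue about closures of sets of preprojectives. But a set $\{\mc O(n): n\le N\}$ is typically \emph{unbounded below}, and under $F$ the $\mc O(n)$ with $n<0$ land among the preinjectives in the \emph{adjacent} cohomological degree, not among the preprojectives. So the image of a bounded-above set straddles two shifts of $\Zg(R)$: finitely many preprojectives in one shift and possibly infinitely many preinjectives in the other. To rescue your argument you must (i) note that $\QCohPk$ is hereditary, so $\Zg(\D(\Pk))$ is also a disjoint union of clopen shifts of $\Zg(\QCohPk)$ and the inclusion is a topological embedding, and then (ii) observe that the closure of an infinite set of preinjectives in $\Zg(R)$ picks up only the generic and Pr\"ufer points \emph{in that shift}, which do not lie in the image of $\Zg(\QCohPk)$, so intersecting back gives nothing new. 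Without this two-shift bookkeeping your ``bounded above $\Rightarrow$ closed'' step does not go through as written; once you add it, the argument is fine and genuinely parallel to the unbounded case, where the infinitely many preprojectives in degree $0$ accumulate precisely on the generic and adic points of $\Zg(R)$ in that same degree, which \emph{do} lie in the image of $\Zg(\QCohPk)$.
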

\begin{proof}
This could be deduced from the description of the topology of the Ziegler spectrum over the Kronecker algebra $R$ but we can argue directly as follows.  For each $n \in \Z$, $\mc O(n)$ is the only indecomposable c-pure-injective for which both functors
\[ \Hom_{\QCohPk}(\mc O(n+1), -) \quad \text{and} \quad \Ext_{\QCohPk}(\mc O(n-1), -) \]
vanish; the vanishing class of the former is clearly definable, whereas for the latter we use Lemma \ref{Ext-vanishing-definable}. Therefore the single-point set containing $\mc O(n)$ is closed.

On the other hand, $\mc O(n)$ is the only indecomposable c-pure-injective for which neither of the functors
\[ \Hom_{\QCohPk}(\mc O(n), -) \quad \text{and} \quad \Ext_{\QCohPk}(\mc O(n+2), -) \]
vanishes; the former vanishes on all $\mc O(m)$ for $m < n$, whereas the latter vanishes on g-pure-injectives and all $\mc O(m)$ with $m > n$.

We see that line bundles form a discrete subspace of $\Zg(\QCohPk)$, therefore taking the closure of any set of line bundles can possibly add only points from the geometric part.

If a set $S$ of line bundles $\mc O(n)$ has $n$ strictly bounded above by some $m$, then this set is contained in the definable vanishing class of $\Hom_{\QCohPk}(\mc O(m), -)$, which contains no g-pure-injectives. Hence $S$ is closed.

If, on the other hand, a set $S$ of line bundles contains $\mc O(n)$ with arbitrarily large $n \in \Z$, additional points appear in the closure. Denote by $\mbc X$ the smallest definable subcategory of \QCohX\ containing $S$. Firstly, let $U$ be the complement of a single closed point $x \in \Pk$. For every pair $m < n$ such that $\mc O(m), \mc O(n) \in S$, pick a monomorphism $\mc O(m) \to \mc O(n)$ which is an isomorphism on $U$. This way we obtain a chain of monomorphisms, the direct limit of which is the sheaf $\mc M = \iota^{\qc}_{U,*}(\mc O|_U)$. Therefore $\mc M \in \mbc X$.

Since $\mc O(U)$ is a PID, the pure-injective hull $N$ of $\mc O(U)$ in the category of $\mc O(U)$-modules is the direct product of completions of the local rings $\mc O_y$ for each closed $y \in U$; applying the (definable by \ref{direct-image-qc-definable}) direct image functor $\iota^{\qc}_{U,*}$ to the corresponding map in \QCohU\ produces a c-pure-injective hull $\mc M \ito \mc N$ in \QCohPk, therefore $\mc N \in \mbc X$. However, $\mc N$ is just the product of ``adic'' sheaves, which therefore belong to $\mbc X$, too. Since the choice of the point $x$ was arbitrary, we obtain all ``adic'' sheaves in the closure of $S$. Furthermore, any such point has the generic point in its closure.

Finally, observe that $\mc X$ consists only of torsion-free sheaves, i.e.\ the sheaves for which the definable functors $\Hom_{\QCohPk}(\mc T, -)$, $\mc T$ torsion coherent, vanish. All the remaining indecomposable pure-injectives are not torsion-free and therefore cannot be in the closure of $S$.
\end{proof}


In the case of the projective line, we are also able to give an alternative description of the subcategory $\mbc D_{\Pk}$, which we denote here just $\mbc D$ for short; this description was suggested to us by Jan \v S\v tov\'\i\v cek:
\begin{prop}
An object $\mc M$ of\/ \QCohPk\ belongs to $\mbc D$ if and only if for each $n \in \Z$,
\[ \Ext^1_{\QCohPk}(\mc O(n), \mc M) = 0, \]
if and only if for each $n \in \Z$,
\[ \Hom_{\QCohPk}(\mc M, \mc O(n)) = 0. \]
\end{prop}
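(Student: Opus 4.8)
The plan is to connect the two vanishing conditions by a version of Serre duality valid for arbitrary quasicoherent sheaves, and separately to recognise the $\Ext$-vanishing class as $\mbc D$ using the classification of $\Zg(\QCohPk)$ from Theorem \ref{Zg-QCoh-Pk}.

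First I would show that $\mbc D$ equals the class $\mc A=\{\mc M\in\QCohPk:\Ext^1_{\QCohPk}(\mc O(n),\mc M)=0\text{ for all }n\in\Z\}$. By Lemma \ref{Ext-vanishing-definable} each class $\{\mc M:\Ext^1_{\QCohPk}(\mc O(n),\mc M)=0\}$ is a definable subcategory containing all g-pure-injectives; hence so is their intersection $\mc A$, and in particular $\mc A$ contains every indecomposable g-pure-injective and therefore the definable subcategory $\mbc D$ they generate, so $\mbc D\subseteq\mc A$. Conversely, a definable subcategory is determined by the indecomposable pure-injectives it contains (the correspondence between definable subcategories and Ziegler-closed sets, cf.\ \cite{P}), so it suffices to check that $\mc A$ and $\mbc D$ contain the same points of $\Zg(\QCohPk)$. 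By Theorem \ref{Zg-QCoh-Pk} every indecomposable point is either g-pure-injective or a line bundle; the g-pure-injective ones lie in both $\mc A$ and $\mbc D$, while a line bundle $\mc O(m)$ lies in neither, since $\Ext^1_{\QCohPk}(\mc O(m+2),\mc O(m))\cong\Hh^1(\Pk,\mc O(-2))\cong k\neq 0$. Thus $\mc A=\mbc D$.

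Next I would establish, for every $\mc M\in\QCohPk$ and every $n\in\Z$, a natural isomorphism
\[ \Hom_{\QCohPk}(\mc M,\mc O(n))\;\cong\;\Ext^1_{\QCohPk}(\mc O(n+2),\mc M)^{*} \]
of $k$-vector spaces. Writing $\mc M=\varinjlim_{\alpha}\mc M_{\alpha}$ as the filtered colimit of its coherent subsheaves, the functor $\Hom_{\QCohPk}(-,\mc O(n))$ turns this into the inverse limit $\varprojlim_{\alpha}\Hom_{\QCohPk}(\mc M_{\alpha},\mc O(n))$; since $\Pk$ is proper over $k$ each term $\Hom_{\QCohPk}(\mc M_{\alpha},\mc O(n))$ is finite-dimensional, hence equal to its double dual, so that $\varprojlim_{\alpha}\Hom_{\QCohPk}(\mc M_{\alpha},\mc O(n))\cong\bigl(\varinjlim_{\alpha}\Hom_{\QCohPk}(\mc M_{\alpha},\mc O(n))^{*}\bigr)^{*}$. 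Serre duality on $\Pk$, with $\omega_{\Pk}\cong\mc O(-2)$ (see \cite[Theorem 7.1]{H}), applied to the coherent sheaves $\mc M_{\alpha}$ gives, after dualising and using that the spaces involved are finite-dimensional over $k$, natural isomorphisms $\Hom_{\QCohPk}(\mc M_{\alpha},\mc O(n))^{*}\cong\Ext^1_{\QCohPk}(\mc O(n),\mc M_{\alpha}\otimes\omega_{\Pk})\cong\Ext^1_{\QCohPk}(\mc O(n+2),\mc M_{\alpha})$. Finally $\Ext^1_{\QCohPk}(\mc O(n+2),-)\cong\Hh^1(\Pk,(-)\otimes\mc O(-n-2))$ commutes with filtered colimits because $\Pk$ is noetherian (the same fact already used in Lemma \ref{D-cohomology-vanishes}), so $\varinjlim_{\alpha}\Ext^1_{\QCohPk}(\mc O(n+2),\mc M_{\alpha})\cong\Ext^1_{\QCohPk}(\mc O(n+2),\mc M)$; assembling these isomorphisms yields the displayed one.

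It then follows at once that $\Hom_{\QCohPk}(\mc M,\mc O(n))=0$ if and only if $\Ext^1_{\QCohPk}(\mc O(n+2),\mc M)=0$; since $n\mapsto n+2$ permutes $\Z$, the condition that $\Hom_{\QCohPk}(\mc M,\mc O(n))=0$ for all $n$ is equivalent to the condition that $\Ext^1_{\QCohPk}(\mc O(n),\mc M)=0$ for all $n$, which by the first step is equivalent to $\mc M\in\mbc D$. I expect the Serre-duality step to be the delicate one: the standard statement covers coherent sheaves only, and the passage to the colimit must be justified by the two observations highlighted above --- that each $\Hom_{\QCohPk}(\mc M_{\alpha},\mc O(n))$ is finite-dimensional (so that dualising the inverse system and commuting $(-)^{*}$ past the colimit are both harmless) and that $\Hh^1(\Pk,-)$, equivalently $\Ext^1_{\QCohPk}(\mc O(j),-)$, commutes with filtered colimits.
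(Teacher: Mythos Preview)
Your argument is correct, and the first half---identifying $\mbc D$ with the common Ext-vanishing class via the Ziegler classification of Theorem \ref{Zg-QCoh-Pk}---is essentially the paper's own proof (the paper cites Example \ref{line-bundles-example} rather than computing $\Ext^1(\mc O(m+2),\mc O(m))\cong k$ directly, but the content is the same).

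For the Hom-vanishing characterisation, however, you take a genuinely different route. The paper argues each direction separately and entirely within the purity framework: if $\mc M\notin\mbc D$, its c-pure embedding into a product of indecomposable c-pure-injectives must project non-trivially onto some line bundle; if $\mc M\in\mbc D$, then since $\Ext^2$ vanishes on $\QCohPk$ the class $\mbc D$ is closed under quotients, so the image of any map $\mc M\to\mc O(n)$ would be a line bundle lying in $\mbc D$, which is impossible. Your approach instead establishes the quantitative isomorphism $\Hom_{\QCohPk}(\mc M,\mc O(n))\cong\Ext^1_{\QCohPk}(\mc O(n+2),\mc M)^{*}$ for arbitrary quasicoherent $\mc M$ by writing $\mc M$ as a filtered colimit of coherent sheaves, applying classical Serre duality termwise, and passing to the limit using that the terms are finite-dimensional and that $\Hh^1(\Pk,-)$ commutes with filtered colimits. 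This gives strictly more: you obtain an explicit duality formula, not merely the equivalence of vanishing, at the cost of importing Serre duality and the colimit manipulations. The paper's argument is more internal to the machinery already set up and, as a by-product, observes directly that $\mbc D$ is closed under quotients, which feeds into the subsequent Corollary; your proof does not make this explicit, though it of course follows from the Ext description together with $\Ext^2=0$.
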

\begin{proof}
Let $\mbc E$ be the subcategory of \QCohPk\ consisting of those $\mc M$ such that $\Ext^1_{\QCohPk}(\mc O(n), \mc M) = 0$ for all $n \in \Z$. By Lemma \ref{Ext-vanishing-definable}, $\mbc E$ is the intersection of definable subcategories, therefore it is itself definable.

The proof that $\mbc D = \mbc E$ is now just a matter of checking that $\mbc D$ and $\mbc E$ contain the same indecomposable c-pure-injectives. By Lemma \ref{Ext-vanishing-definable}, all indecomposable g-pure-injectives belong to $\mbc E$, while Example \ref{line-bundles-example} shows that no line bundle belongs to $\mbc E$, which is precisely the case for $\mbc D$ as well.

To check the second claim, pick $\mc M \notin \mbc D$ and let $i \colon \mc M \ito \mc N$ be a c-pure embedding into the direct product of indecomposable c-pure-injectives. Taking into account the description of these indecomposables (Theorem \ref{Zg-QCoh-Pk}), some of the terms in the product have to be line bundles, for otherwise we would have $\mc M \in \mbc D$, and for the same reason the composition of $i$ with the projection on at least one of the line bundles has to be non-zero, showing that $\Hom_{\QCohPk}(\mc M, \mc O(n)) \neq 0$ for some $n \in \Z$.

On the other hand, let $\mc M \in \mbc D$. The description of $\mbc D$ as the intersection of Ext-vanishing classes shows that $\mbc D$ is closed under arbitrary factors as $\Ext^2$ vanishes on \QCohPk. The image of any non-zero map $\mc M \to \mc O(n)$ would be a line bundle, too, but no line bundle belongs to $\mbc D$, therefore we condclude that $\Hom_{\QCohPk}(\mc M, \mc O(n)) = 0$ for each $n \in \Z$ in this case.
\end{proof}

Therefore, for the projective line, we get some extra properties of $\mbc D$:

\begin{cor}
The subcategory $\mbc D$ is a torsion class and a right class of a cotorsion pair. In particular, $\mbc D$ is closed under arbitrary colimits, factors, and extensions.
\end{cor}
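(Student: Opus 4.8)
The plan is to deduce the corollary formally from the preceding Proposition, which identifies $\mbc D$ simultaneously as the right $\Ext^1$-orthogonal $\{\mc O(n)\mid n\in\Z\}^{\perp}$ and as the $\Hom$-orthogonal $\{\mc M\in\QCohPk\mid \Hom_{\QCohPk}(\mc M,\mc O(n))=0\text{ for all }n\in\Z\}$; no further geometry is needed. First I would check that $\mbc D$ is a torsion class by verifying three closure conditions via the $\Hom$-description: closure under factors is immediate from left exactness of $\Hom_{\QCohPk}(-,\mc O(n))$ (already observed in the proof of the Proposition); closure under arbitrary coproducts follows from $\Hom_{\QCohPk}(\bigoplus_i\mc M_i,\mc O(n))\cong\prod_i\Hom_{\QCohPk}(\mc M_i,\mc O(n))$; and closure under extensions follows from the left-exact sequence $0\to\Hom(\mc M'',\mc O(n))\to\Hom(\mc M,\mc O(n))\to\Hom(\mc M',\mc O(n))$, whose outer terms vanish when $\mc M',\mc M''\in\mbc D$. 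Since \QCohPk\ is a Grothendieck category, a full subcategory closed under quotient objects, set-indexed coproducts and extensions is a torsion class --- with torsion-free class $\{\mc F\mid\Hom_{\QCohPk}(\mc M,\mc F)=0\text{ for all }\mc M\in\mbc D\}$ --- so here I would simply invoke the standard characterization of torsion classes in Grothendieck categories.

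Next, for the cotorsion-pair assertion I would use the $\Ext^1$-description: putting $S=\{\mc O(n)\mid n\in\Z\}$ we have $\mbc D=S^{\perp}$, and setting $\mbc A={}^{\perp}\mbc D$ the pair $(\mbc A,\mbc D)$ is a cotorsion pair. Indeed $S\subseteq\mbc A$ forces $\mbc A^{\perp}\subseteq S^{\perp}=\mbc D$, while $\mbc D\subseteq\mbc A^{\perp}$ holds by the very definition of $\mbc A$, so $\mbc D=\mbc A^{\perp}$. I would emphasise that this asserts only that $\mbc D$ is the right-hand class of \emph{a} cotorsion pair; no completeness of the pair is claimed, so no small-object or cogeneration argument is needed.

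Finally, the ``in particular'' clause is a formality once $\mbc D$ is known to be a torsion class: closure under arbitrary colimits follows from closure under coproducts and quotients (every colimit is a quotient of a coproduct), and closure under factors and extensions has just been established (the latter also being automatic for the right class of a cotorsion pair). I do not expect a real obstacle here; the only point needing a little care is invoking the abstract characterization of torsion classes in a Grothendieck category with exactly the right list of closure properties, and noting that --- unlike in the proof of the preceding Proposition --- the vanishing of $\Ext^2$ on \QCohPk\ plays no role once one argues directly with the $\Hom$-description.
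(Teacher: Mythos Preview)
Your proposal is correct and is precisely the formal deduction the paper leaves to the reader: the corollary is stated without proof, and the intended argument is exactly to read off the torsion-class property from the $\Hom$-description and the cotorsion-pair property from the $\Ext^1$-description given in the preceding Proposition. Your remark that the vanishing of $\Ext^2$ is no longer needed once one argues via the $\Hom$-description is a nice observation, though not something the paper comments on.
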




\begin{thebibliography}{99}

\bibitem{B}
Be\u\i linson, A. A. (1978) \textit{Coherent sheaves on $\mathbb P^n$ and problems in linear algebra}. Funktsional.\ Anal.\ i Prilozhen., 12(3) 68--69.


\bibitem{CS}
Čoupek, P., Šťovíček, J. (2017) \textit{Cotilting Sheaves on Noetherian Schemes}. arXiv:1707.01677v1

\bibitem{EEO}
Enochs, E., Estrada, S., Odabaşi, S. (2016). \textit{Pure Injective and Absolutely Pure Sheaves}. Proc. Edinburgh Math. Soc., 59(3), 623--640.\\ doi:10.1017/S0013091515000462

\bibitem{G}
Garkusha, G. (2010). \textit{Classifying finite localizations of quasi-coherent sheaves}. St Petersburg Math. J., 21(3), 433--458.

\bibitem{H}
Hartshorne, R. (1997). \textit{Algebraic Geometry}. Graduate texts in mathematics, Vol.\ 52, Springer-Verlag.

\bibitem{I}
Iversen, B. (1986). \textit{Cohomology of sheaves}. Springer-Verlag.

\bibitem{JL}
Jensen, C. U., Lenzing, H. (1989). \textit{Model Theoretic Algebra; with Particular Emphasis on Fields, Rings and Modules}. Gordon and Breach.


\bibitem{M2}
Murfet, D. (2006). \textit{Concentrated schemes}. Available at \url{http://therisingsea.org/notes/ConcentratedSchemes.pdf}

\bibitem{P2}
Prest, M. (1988). \textit{Model Theory and Modules}. London Math. Soc.
Lect. Note Ser., Vol.\ 130, Cambridge University Press.

\bibitem{P}
Prest, M. (2009). \textit{Purity, Spectra and Localisation}. Encyclopedia of Mathematics and its Applications, Vol. 121, Cambridge University Press.

\bibitem{P3}
Prest, M. (2011). \textit{Definable additive categories: purity and
model theory}. Mem. Amer. Math. Soc., Vol.~210/No.~987.

\bibitem{P5}
Prest, M. (2012) \textit{Abelian categories and definable additive categories}. arXiv:1202.0426v1

\bibitem{P4}
Prest, M. (2018). \textit{Multisorted modules and their model theory}. Contemporary Mathematics, Amer. Math.Soc., to appear. arXiv:1807.11889v1

\bibitem{PR}
Prest, M., Ralph, A. (2010). \textit{Locally Finitely Presented Categories of Sheaves of Modules}. Preprint. Available at \url{http://www.maths.manchester.ac.uk/~mprest/publications.html}.

\bibitem{R}
Reynders, G. (1998). \textit{Ziegler Spectra over Serial Rings and Coalgebras}. Ph.D.\ thesis.    Available at \url{http://www.maths.manchester.ac.uk/~mprest/publications.html}.

\bibitem{TT}
Thomason, T. W., Trobaugh, T. (1990). \textit{Higher algebraic K-theory of schemes and of derived categories}. The Grothendieck Festschrift, Vol. III, Progr. Math., 88, 247--435.

\bibitem{Stacks}
The Stacks Project Authors. \emph{Stacks project}. Available at \url{http://stacks.math.columbia.edu}.


\end{thebibliography}
\end{document}